\documentclass[11pt,twoside]{amsart}
\usepackage{amsmath,amssymb,amsthm}

\textheight 22.5cm
\textwidth 15cm
\topmargin -4mm
\oddsidemargin 5mm
\evensidemargin 5mm

\newtheorem{thm}{Theorem}[section]

 \newtheorem{cor}{Corollary}[section]
 \newtheorem{lem}{Lemma}[section]
 \newtheorem{prop}{Proposition}[section]
 \newtheorem{defn}{Definition}[section]
\newtheorem{rem}{Remark}[section]

\def\Id{{\rm Id}\,}
\def\d{\partial}
\def\ddj{\dot \Delta_j}

\def\tilde{\widetilde}
\def\hat{\widehat}

\def\wt{\widetilde}

\newcommand\R{\mathbb{R}}

\newcommand\Z{\mathbb{Z}}

\newcommand{\ep}{\varepsilon}

\newcommand{\Supp}{\hbox{Supp}\,}
\newcommand{\Tr}{\hbox{\rm{Tr}\,}}

\renewcommand{\div}{\mbox{\rm div}\;\!}
\newcommand{\divx}{\mbox{\rm div}_x\,}

\def\cA{{\mathcal A}}

\def\cC{{\mathcal C}}
\def\cD{{\mathcal D}}

\def\cF{{\mathcal F}}

\def\cL{{\mathcal L}}

\def\cP{{\mathcal P}}

\def\cT{{\mathcal T}}
\def\cX{{\mathcal X}}

\begin{document}
\title[Decay estimates for the compressible Navier-Stokes-Fourier equations]{
Optimal decay estimates in the critical $L^{p}$ framework for flows of compressible viscous and heat-conductive gases}
\author{Rapha\"el Danchin}
\address{Universit\'{e} Paris-Est,  LAMA (UMR 8050), UPEMLV, UPEC, CNRS, 61 avenue du G\'en\'eral de Gaulle, 94010 Cr\'eteil Cedex 10}
\email{danchin@univ-paris12.fr}
\thanks{The first author is supported by ANR-15-CE40-0011 and by the  Institut Universitaire de France.}
\author{Jiang Xu}
\address{Department of Mathematics,  Nanjing
University of Aeronautics and Astronautics,
Nanjing 211106, P.R.China,}
\email{jiangxu\underline{ }79math@yahoo.com}
\thanks{The second author is partially supported by the National
Natural Science Foundation of China (11471158), the Program for New Century Excellent
Talents in University (NCET-13-0857) and the Fundamental Research Funds for the Central
Universities (NE2015005). He would like to thank Professor A. Matsumura for  introducing  him
to  the decay problem for partially parabolic equations when he visited Osaka University.  He is also grateful to
Professor R. Danchin for his kind hospitality when visiting the LAMA in UPEC}

\subjclass{76N15, 35Q30, 35L65, 35K65}
\keywords{Time decay rates; Full Navier-Stokes equations; critical spaces;  $L^p$ framework.}

\begin{abstract}
The global existence issue in critical regularity spaces for the full Navier-Stokes equations
satisfied by compressible viscous and heat-conductive gases has been first addressed in \cite{D2}, then recently extended to the  general $L^p$ framework in \cite{DH}.
In the present work, we establish decay estimates for the global solutions constructed in \cite{DH}, under an
additional mild  integrability  assumption  that is satisfied if the low frequencies of the initial data are in  $L^{r}(\R^d)$ with $\frac p2\leq r\leq \min\{2,\frac d2\}\cdotp$
As a by-product in the case $d=3,$  we recover     the classical  decay
 rate   $t^{-\frac34}$ for $t\to+\infty$ that has been  observed by A. Matsumura and T. Nishida in \cite{MN2}  for  solutions with high  Sobolev regularity.  Compared to  a recent paper of us \cite{DX} dedicated to the  barotropic case,
 not only we are able to treat the full system, but we also weaken the low frequency
 assumption and  improve the decay exponents for the high
 frequencies of the solution.
\end{abstract}

\maketitle

\section{Introduction}\setcounter{equation}{0}
The motion of general viscous and heat conductive gases  is governed by
\begin{equation}\label{R-E1}
\left\{\begin{array}{l}
\partial_t\rho+\divx(\rho u)=0,\\ [1mm]
\partial_t(\rho u)+\divx(\rho u\otimes u)+\nabla_x P=\divx \tau,\\ [1mm]
\partial_t\Big[\rho\Big(\frac{|u|^2}{2}+e\Big)\Big]+\divx\Big[u\Big(\rho\Big(\frac{|u|^2}{2}+e\Big)+P\Big)\Big]=\divx(\tau\cdot u-q),
\end{array}\right.
\end{equation}
where $\rho=\rho(t,x)\in \mathbb{R}_{+}$ denotes the density, $u=u(t,x)\in \mathbb{R}^d$, the velocity field and $e=e(t,x)\in \mathbb{R}_{+}$, the internal energy per unit mass.  We restrict ourselves to the case of a Newtonian fluid: the viscous stress tensor is $\tau=\lambda\,\divx u \,\Id+2\mu D_x(u)$, where $D_x(u)\triangleq\frac12(D_x u+{}^T\!D_x u)$ stands for  the  deformation tensor, and $\divx$ is the divergence operator with respect to the spatial variable. The  bulk and shear viscosities are
 supposed to satisfy
\begin{equation}\label{eq:posvis}\mu > 0\quad\hbox{and}\quad \nu\triangleq\lambda+2\mu>0.\end{equation}
The heat flux $q$ is given by $ q=-\kappa\nabla_x \mathcal{T}$ where  $\kappa>0,$   and $\mathcal{T}$ stands for the temperature.
 For simplicity,  the coefficients $\lambda,$ $\mu$ and $\kappa$  are taken constant in all that follows.
 \smallbreak
   It is well known that  combining the second and third equations of \eqref{R-E1}  yields
$$
\d_t(\rho e)+\divx(\rho u e)+P\divx u-\kappa\Delta_{\!x}\cT=2\mu D_x(u):D_x(u)+\lambda(\divx u)^2.
$$
In order to reformulate System \eqref{R-E1}  in terms of $\rho,$ $u$ and
$\cT$ only, we make the additional assumption that
 the internal energy $e=e(\rho,\cT)$ satisfies  Joule law :
\begin{equation}\label{eq:Joule}\d_\cT e=C_v\ \hbox{ for some  positive constant } C_{v}\end{equation}
and that the pressure function $P=P(\rho,\cT)$ is  of the form
\begin{equation}\label{eq:pressure}
P(\rho,\mathcal{T})=\pi_{0}(\rho)+\mathcal{T}\pi_{1}(\rho),\end{equation}
where $\pi_{0}$ and $\pi_{1}$ are given smooth functions\footnote{One can thus consider  e.g. perfect gases ($\pi_{0}(\rho)=0$ and  $\pi_{1}(\rho)=R\rho$ with $R>0$) or Van-der-Waals  fluids ($\pi_{0}(\rho)=-\alpha\rho^2$, $\pi_{1}(\rho)=\beta\rho/(\delta-\rho)$ with $\alpha,\beta,\delta>0$).}.
Then taking advantage of the
 Gibbs relations for the internal energy and the Helmholtz free energy, we get  the  Maxwell relation
$$\rho^2\d_\rho e(\rho,\cT)=P(\rho,\cT)-\cT\d_\cT P(\rho,\cT)=\pi_0(\rho),$$
and end up with  the following temperature equation:
\begin{equation}\label{R-E8}
\rho C_v(\d_t\cT+u\cdot\nabla_x\cT)
+ \cT\pi_1(\rho)\divx u-\kappa\Delta_x\cT=2\mu D_x(u):D_x(u)+\lambda(\divx u)^2.
\end{equation}

  We are concerned with the large time behaviour of (strong) global solutions
  to  \eqref{R-E1} in the case where   the fluid domain is the whole space $\R^d$ with $d\geq3$.
We focus on solutions  that  are close  to some
constant  equilibrium $(\bar{\rho},0,\bar{\mathcal{T}})$ with $\bar\rho>0$ and $\bar\cT>0$
satisfying the  linear stability condition:
\begin{eqnarray}\label{R-E002}\partial_{\rho}P(\bar{\rho},\bar{\mathcal{T}})>0\ \ \ \mbox{and}\ \ \ \partial_{\mathcal{T}}P(\bar{\rho},\bar{\mathcal{T}})>0.\end{eqnarray}
Recall that, starting with  the pioneering work by Matsumura and Nishida \cite{MN2},
 a number of papers have been dedicated to that issue in the case of
  solutions with high Sobolev regularity.
We here  aim at performing the long time asymptotics within the so-called \emph{critical regularity} framework,
that  is in functional spaces endowed with norms that are invariant for all $\ell>0$ by the transform:
\begin{equation}\label{R-E3}
\rho(t,x)\leadsto \rho(\ell ^2t,\ell x),\quad
u(t,x)\leadsto \ell u(\ell^2t,\ell x), \quad  \mathcal{T}(t,x) \leadsto \ell^2\mathcal{T}(\ell^2t,\ell x)  \qquad\ell>0.
\end{equation}
That definition of criticality corresponds to the scaling invariance (up to a
suitable change of the  pressure terms) of  System  \eqref{R-E1} written in terms of  $(\rho,u,\cT).$
\medbreak
Scaling invariance plays  a fundamental role in the study of evolutionary PDEs.
Recall that in the context of the incompressible Navier-Stokes equations, working in critical spaces
goes back  to the work by  Fujita \& Kato in \cite{FK} (see also more recent results in Kozono \& Yamazaki \cite{KY}
and Cannone  \cite{C}), and that it has been extended to the compressible
Navier-Stokes equations in e.g.   \cite{CD,CD2,CMZ2,D0,D2,D3,DH,H2}.

Even though, rigorously speaking, System  \eqref{R-E1} does not possess any scaling invariance,
  it is possible to solve it locally in time in Banach spaces endowed with norms
  having the  invariance  given by \eqref{R-E1}. For example, it has been proved
  in \cite{CD,D3} that  in dimension $d\geq3,$ it is  well-posed
in  $\dot{B}^{\frac{d}{p}}_{p,1}\times(\dot{B}^{\frac{d}{p}-1}_{p,1})^{d}\times\dot{B}^{\frac{d}{p}-2}_{p,1}$
 if $1\leq p<d.$  A key fact for proving that result
is    to observe that the coupling between the  equations is  low order.

As regards  the global well-posedness issue however,  the low order terms have to be taken into
account in the choice of a suitable functional framework, and it is suitable
to use    ``hybrid" Besov norms with
 different regularity indices  in the low and high frequencies.
The basic heuristics is that, for low frequencies, the first order terms predominate so that \eqref{R-E1} can  be handled by means
of  hyperbolic methods (in particular it is natural to work at the same level of regularity for the density, velocity and temperature). In contrast, in the high frequency regime, two types of behaviours coexist: the parabolic one for the velocity and temperature, and the damped one for the density.  This heuristics may be translated in terms of a priori
estimates by means of an energy method that can be  directly implemented on \eqref{R-E1}, after
spectral localization (the main difficulty arising  from the convection term in the density equation
can be by-passed thanks to suitable integration by parts and commutator estimates, see \cite{D2}).
Recently, the first author and L. He \cite{DH} extended the results of \cite{D2} to more general $L^p$ Besov spaces, and got the following statement:
 \begin{thm}\label{thm1.1}
Let $\bar{\rho}>0$ and $\bar{\mathcal{T}}$ be two constant such that \eqref{R-E002} is fulfilled. Suppose that $d\geq 3$, and that  $p$ satisfies
\begin{equation}\label{eq:p}
2\leq p<d\quad\hbox{and}\quad  p\leq 2d/(d-2).\end{equation}
There exists a constant $c=c(p,d,\lambda,\mu,P, \kappa, C_v,\bar{\rho}, \bar{\mathcal{T}})$ such that
if    $a_0\triangleq \rho_0-\bar{\rho}$ is in $\dot B^{\frac {d}{p}}_{p,1},$
 if $\upsilon_0\triangleq u_0$ is in $\dot B^{\frac {d}{p}-1}_{p,1}$, if $\theta_0\triangleq \mathcal{T}_0-\bar{\mathcal{T}}$  is in $\dot B^{\frac {d}{p}-2}_{p,1}$
 and if in addition  the low frequency part\footnote{See the definitions in
 \eqref{eq:lh} and just below \eqref{R-E15}.}  $(a_0^\ell,\upsilon_0^\ell, \theta_0^\ell)$ of
   $(a_0,\upsilon_0, \theta_0)$ is in $\dot B^{\frac d2-1}_{2,1}$  with
    \begin{eqnarray}\label{R-E4}
\cX_{p,0}\triangleq \|(a_0,\upsilon_0,\theta_0)\|^\ell_{\dot B^{\frac d2-1}_{2,1}}+\|a_0\|^h_{\dot B^{\frac dp}_{p,1}}
+\|\upsilon_0\|^h_{\dot B^{\frac d{p}-1}_{p,1}}+\|\theta_0\|^h_{\dot B^{\frac d{p}-2}_{p,1}}\leq c
\end{eqnarray}
then System \eqref{R-E1}-\eqref{R-E8} supplemented with   the initial condition
\begin{equation}\label{R-E2}
(\rho,u,\mathcal{T})|_{t=0}=(\rho_0,u_0,\mathcal{T}_{0})
\end{equation}
admits a unique global-in-time  solution $(\rho,u,\mathcal{T})$  with  $\rho=\bar{\rho}+a$, $u=\upsilon$ and $\mathcal{T}=\bar{\mathcal{T}}+\theta$, where $(a,\upsilon,\theta)$ belongs to the space $X_p$ defined by:
$$\displaylines{
(a,\upsilon,\theta)^\ell\in \wt\cC_b(\R_+;\dot B^{\frac d2-1}_{2,1})\cap  L^1(\R_+;\dot B^{\frac d2+1}_{2,1}),\quad
a^h\in \wt\cC_b(\R_+;\dot B^{\frac dp}_{p,1})\cap L^1(\R_+;\dot B^{\frac dp}_{p,1}),
\cr \upsilon^h\in  \wt\cC_b(\R_+;\dot B^{\frac dp-1}_{p,1})
\cap L^1(\R_+;\dot B^{\frac dp+1}_{p,1}),\ \ \theta^h\in  \wt\cC_b(\R_+;\dot B^{\frac dp-2}_{p,1})
\cap L^1(\R_+;\dot B^{\frac dp}_{p,1}).}
$$
Moreover, we have for some constant $C=C(p,d,\lambda,\mu,P, \kappa, C_v,\bar{\rho}, \bar{\mathcal{T}})$,
\begin{eqnarray}\label{R-E5}
\cX_{p}(t)\leq C\cX_{p,0},
\end{eqnarray}
for any $t>0$, where
\begin{multline}\label{R-E6}
 \cX_{p}(t)\triangleq\|(a,\upsilon,\theta)\|^{\ell}_{\wt L^\infty_t(\dot B^{\frac d2-1}_{2,1})}+\|(a,\upsilon,\theta)\|^{\ell}_{L^1_t(\dot B^{\frac d2+1}_{2,1})}
\\+\|a\|^{h}_{\wt L^\infty_t(\dot B^{\frac dp}_{p,1})\cap L^1_t(\dot B^{\frac dp}_{p,1})}
+\|\upsilon\|^{h}_{\wt L^\infty_t(\dot B^{\frac dp-1}_{p,1})\cap L^1_t(\dot B^{\frac dp+1}_{p,1})}
+\|\theta\|^{h}_{\wt L^\infty_t(\dot B^{\frac dp-2}_{p,1})\cap L^1_t(\dot B^{\frac dp}_{p,1})}.
\end{multline}
\end{thm}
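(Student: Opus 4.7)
The plan is to linearize the system around the constant equilibrium $(\bar\rho,0,\bar{\mathcal{T}})$ by setting $a=\rho-\bar\rho$, $\upsilon=u$ and $\theta=\mathcal{T}-\bar{\mathcal{T}}$, obtaining a system whose leading linear part couples mass, momentum and temperature through the pressure and divergence terms governed by $\partial_\rho P(\bar\rho,\bar{\mathcal{T}})$, $\partial_{\mathcal{T}} P(\bar\rho,\bar{\mathcal{T}})$ and $\bar{\mathcal{T}}\pi_1(\bar\rho)$. A spectral analysis reveals two regimes. In low frequencies, the three equations behave as a mixed hyperbolic--parabolic system with smoothing at the same scale for all unknowns, which motivates measuring the triple $(a,\upsilon,\theta)^\ell$ in the single $L^2$-based space $\dot B^{\frac d2-1}_{2,1}$. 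In high frequencies, $\upsilon$ and $\theta$ satisfy parabolic equations while $a$ obeys a damped transport equation without smoothing, which is why $\cX_p(t)$ uses the three distinct $L^p$-based regularity indices $\frac dp,\frac dp-1,\frac dp-2$.

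The core of the proof is then an a priori estimate of the form $\cX_p(t)\leq C\cX_{p,0}+C\cX_p(t)^2$, from which global existence follows by a continuation argument once $\cX_{p,0}$ is sufficiently small. To establish it, one applies the frequency-localization $\ddj$ to each equation and performs energy estimates separately in low and high frequencies. In the low-frequency regime, after a Helmholtz decomposition of the velocity, the divergence-free component satisfies a pure heat equation, while the curl-free component together with $a$ and $\theta$ forms a strongly coupled system whose symmetrization yields a Lyapunov functional providing $L^\infty_t$-control and $L^1_t$-integrability two derivatives higher in $\dot B^{\frac d2-1}_{2,1}$. In the high-frequency regime, maximal $L^p$-regularity handles the parabolic blocks $\upsilon^h$ and $\theta^h$, while for $a^h$ one introduces an effective unknown combining $a$ with the potential part of $\upsilon$ (and an analogous temperature correction) so as to exhibit the damping hidden in the mass equation and close the estimate for the density at the same order of regularity in $L^\infty_t$ and $L^1_t$.

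The nonlinear source terms are controlled by product, composition and commutator estimates in the hybrid Besov framework. The principal technical obstacle is the convection term $u\cdot\nabla a$ in the mass equation, since $a$ has no smoothing at high frequencies: it is handled through a commutator estimate for $[\ddj,u\cdot\nabla]a$ that recovers the derivative formally lost. The condition \eqref{eq:p} enters via the embedding $\dot B^{\frac dp-1}_{p,1}\hookrightarrow \dot B^{\frac d2-1}_{2,1}$, used in Bony's paraproduct decomposition to control products between low-frequency $L^2$-type factors and high-frequency $L^p$-type factors; the upper bound $p\leq 2d/(d-2)$ is precisely what makes this transition admissible, while $p<d$ is needed so that the nonlinearities built from $\upsilon$ and its derivatives act continuously on the space of $a$. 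Once the a priori bound is in hand, existence is obtained either by a Friedrichs mollification scheme or by a fixed-point argument on the linearized system, and uniqueness follows from an analogous estimate for the difference of two solutions performed at one derivative below the existence scale.
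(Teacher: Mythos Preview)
This theorem is not proved in the present paper: it is quoted from \cite{DH} and used as a black box (see the sentence immediately preceding the statement, ``Recently, the first author and L.~He~\cite{DH} extended the results of~\cite{D2} to more general $L^p$ Besov spaces, and got the following statement''). There is therefore no proof in the paper to compare your proposal against.

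That said, your outline is broadly faithful to the strategy of \cite{DH} (and its predecessor \cite{D2}): spectral splitting into low and high frequencies, a Lyapunov-type energy method in low frequencies, an effective-velocity reformulation in high frequencies to reveal the damping on $a$, and closure via product/composition/commutator estimates leading to $\cX_p(t)\lesssim\cX_{p,0}+\cX_p(t)^2$. These same ingredients reappear in the paper's proof of Theorem~\ref{thm2.1} (see Steps~1--3 in Section~\ref{sec:4}, in particular the effective velocity $w=\nabla(-\Delta)^{-1}(a-\div\upsilon)$ and Lemma~\ref{lem4.1}).

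One correction: the embedding you invoke to justify \eqref{eq:p} is stated backwards. For $p\geq2$ one has $\dot B^{\frac d2-1}_{2,1}\hookrightarrow\dot B^{\frac dp-1}_{p,1}$, not the reverse (Lemma~\ref{lem3.2}). The constraint $p\leq 2d/(d-2)$, i.e.\ $\frac dp\geq\frac d2-1$, is instead what makes the product laws of Proposition~\ref{prop3.1} applicable when one factor lives in the low-frequency $L^2$-based space $\dot B^{\frac d2-1}_{2,1}$ and the other in the high-frequency $L^p$-based spaces $\dot B^{\frac dp-1}_{p,1}$ or $\dot B^{\frac dp-2}_{p,1}$; it guarantees the positivity conditions $\sigma_1+\sigma_2>0$ needed there. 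The restriction $p<d$ is likewise dictated by the product and composition estimates (for instance so that $\frac dp-1>0$ and $\dot B^{\frac dp-1}_{p,1}\cdot\dot B^{\frac dp-2}_{p,1}$ products make sense), not only by continuity of the nonlinearities in $\upsilon$.
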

The natural next step is to look for a more accurate description of the long time
behavior of the solutions.  Recall that
Matsumura and Nishida in \cite{MN1}
 proved that if the  initial data
 are a small perturbation in $H^3(\mathbb{R}^3)\times L^1(\mathbb{R}^3)$
of $(\bar{\rho},0,\bar{\mathcal{T}})$ then\footnote{Similar decay rates have been established in the half-space or exterior domain cases, see for example \cite{KK2,KS,MN3}.}
\begin{eqnarray}\label{R-E7}
\sup_{t\geq0}\: \langle t\rangle^{\frac{3}{4}} \|(\rho-\bar{\rho},u,\mathcal{T}-\bar{\mathcal{T}})(t)\|_{L^2}
<\infty,\quad\hbox{with }\
\langle t\rangle\triangleq \sqrt{1+t^2}. \end{eqnarray}
It turns out that the above behavior is kind of universal: some years latter
Kawashima in  \cite{Ka} exhibited
similar decay rates for   hyperbolic-parabolic composite systems satisfying what is  now called the ``Shizuta-Kawashima" stability criterion.

Still in the framework of solutions with high Sobolev regularity, there are lots of recent
improvements concerning the large time description of solutions to
the compressible Navier-Stokes equations. In particular, some informations
are now available on the wave aspect of the solutions.
In one dimension space  and in the isentropic case, Zeng \cite{Z}
 showed the $L^1$ convergence to the nonlinear Burgers' diffusive wave.
 For multi-dimensional diffusion waves, Hoff and Zumbrun \cite{HZ1,HZ2} gave a detailed analysis for the Green's function and derived the $L^\infty$ time-decay rates of diffusive waves. In \cite{LW}, Liu and Wang exhibited
   pointwise estimates of  diffusion waves with the optimal time-decay rate in odd dimension, that
   corresponds to the \emph{weak  Huygens' principle}. This was generalized later to the full system \eqref{R-E1} in \cite{L}.
 Recently, Liu \& Noh \cite{LN} provided an exhaustive classification of the different type
 of waves in the long-time asymptotics  as a combination of
  low frequency waves, the dissipative Huygens, diffusion and Riesz waves.

In the present paper, we aim at proving  optimal time-decay estimates for \eqref{R-E1}
\emph{within} the critical regularity
framework of Theorem \ref{thm1.1}.   Recall that in the (simpler)  barotropic case,  Okita \cite{O}
proved the optimal  $L^2$ decay rate in dimension $d\geq3$. The first author \cite{D8} proposed another description which allows to deal with the case  $d=2$ in the $L^2$ critical framework. Very recently, in  \cite{DX} we improved the approach in \cite{D8}
and succeeded in  establishing  optimal decay estimates in the general $L^p$ critical framework for  all dimensions $d\geq2$.
As a first attempt of generalization, we here establish similar  results
for the full Navier-Stokes equations  \eqref{R-E1}. In fact, thanks  to an improvement
of our method, we shall obtain what we believe to be the optimal decay
exponents for the full nonlinear system.


 \section{Reformulation of our problem and main results}\setcounter{equation}{0}

Let us assume that the density and the temperature tend to some positive constants $\bar{\rho}$ and $\mathcal{\bar{T}},$
 at infinity. Setting $\cA\triangleq\mu\Delta+(\lambda+\mu)\nabla\div,$
$\rho=\bar\rho(1+b)$ and $\cT=\bar\cT +{\mathfrak T},$ we see
from \eqref{R-E1} and \eqref{R-E8} that, whenever $b>-1,$ the triplet $(b,u,\theta)$ satisfies\footnote{For better readability, from now on, we
just denote $D_x$ by $D,$  $\divx$ by $\div,$ and so on.}
$$\left\{
\begin{aligned}
&\d_tb+u\cdot\nabla b+(1+b)\div u=0,\\
&\d_tu+u\cdot\nabla u-\frac{\cA u}{\bar\rho(1\!+\!b)}+\frac{\pi'_{0}(\bar\rho(1\!+\!b))}{1\!+\!b}\nabla b+
\frac{\pi_1(\bar\rho(1\!+\!b))}{\bar\rho(1\!+\!b)}\nabla{\mathfrak T}+\frac{\pi'_1(\bar\rho(1\!+\!b))}{1\!+\!b}{\mathfrak T}\nabla b=0,\\
&\d_t{\mathfrak T}+u\cdot\nabla{\mathfrak T}+(\bar\cT\!+\!{\mathfrak T})\frac{\pi_1(\bar\rho(1\!+\!b))}{\bar\rho C_v(1\!+\!b)}\div u-\frac\kappa{\bar\rho C_v(1\!+\!b)}
\Delta{\mathfrak T}=\frac{2\mu D(u)\!:\!D(u)+\lambda(\div u)^2}{\bar\rho C_v(1\!+\!b)}\cdotp
\end{aligned}\right.
$$
Then,  setting $\nu\triangleq\lambda+2\mu,$
$\bar\nu\triangleq\nu/\bar\rho,$
 $\chi_0\triangleq\partial_{\rho}P(\bar{\rho},\bar{\mathcal{T}})^{-\frac{1}{2}},$
 and performing  the change of unknowns
$$a(t,x)=b(\bar\nu\chi_0^2t,\bar\nu\chi_0x),\quad \upsilon(t,x)=\chi_0 u(\bar\nu\chi_0^2t,\bar\nu\chi_0x),
\quad
\theta(t,x)=\chi_0\sqrt{\frac{C_v}{\bar\cT}}\:{\mathfrak T}(\bar\nu\chi_0^2t,\bar\nu\chi_0x),$$
we finally obtain
\begin{equation}\label{R-E9}
\left\{\begin{aligned}
&\partial_{t} a+\div \upsilon=f,\\
&\partial_{t} \upsilon-\widetilde\cA\upsilon+\nabla a+\gamma\nabla \theta=g,\\
&\partial_{t} \theta-\beta\Delta \theta+\gamma \div \upsilon=k,
\end{aligned}\right.\end{equation}
with
$$\widetilde\cA\triangleq\frac\cA\nu,
\quad \beta\triangleq\frac\kappa{\nu C_v},\quad
\gamma=\frac{\chi_0}{\bar\rho}\sqrt{\frac{\bar\cT}{ C_v}}\pi_1(\bar\rho),$$
 and where the nonlinear terms $f,$ $g$ and $k$  are given by
$$\displaylines{f\triangleq-\div(a\upsilon),\quad g\triangleq-\upsilon\cdot\nabla\upsilon-I(a)\wt\cA\upsilon-K_1(a)\nabla a-K_2(a)\nabla \theta-\theta\nabla K_3(a)\cr
\hbox{and }\ k\triangleq-\upsilon\cdot\nabla\theta-\beta I(a)\Delta\theta+\frac{Q(\nabla \upsilon, \nabla\upsilon)}{1\!+\!a}-(\tilde{K}_{1}(a)+\tilde{K}_{2}(a)\theta)\div\upsilon}$$
with
$$I(a)\triangleq\frac{a}{1+a}, \quad  K_1(a)\triangleq\frac{\partial_{\rho}P(\bar{\rho}(1\!+\!a),\bar{\mathcal{T}})}{(1\!+\!a)\partial_{\rho}P(\bar{\rho},\bar{\mathcal{T}})}-1, \quad
K_2(a)\triangleq \frac{\chi_0}{\bar\rho}\sqrt{\frac{\bar\cT}{C_v}} \biggl(\frac{\pi_1(\bar{\rho}(1\!+\!a))}{1\!+\!a}-\pi_1(\bar\rho)\biggr),  $$
$$K_3(a)\triangleq \chi_0\sqrt{\frac{\bar\cT}{C_v}}
\int^{a}_{0}\frac{\pi'_{1}(\bar{\rho}(1+z))}{1+z}dz, \quad  \tilde{K}_2(a)\triangleq \frac{\pi_1(\bar{\rho}(1+a))}{C_v\bar{\rho}(1+a)}, $$
$$\tilde{K}_1(a)\triangleq\frac{\chi_0}{\bar\rho}\sqrt{\frac{\bar\cT}{ C_v}}\biggl(\frac{\pi_1(\bar\rho(1+a))}{1+a}-\pi_1(\bar\rho)\biggr),$$
$$Q(A,B)\triangleq\frac{1}{\nu\chi_0} \sqrt{\frac1{\bar\cT C_v}}\bigl(2\mu A:B+\lambda \Tr A \ \Tr B\bigr)\cdotp$$
In fact, the exact value of $K_1, K_2, K_3, \tilde{K}_1$ and $\tilde{K}_2$ will not matter in
our analysis. We shall just use that those functions are smooth and that $K_1(0)=K_2(0)=K_3(0)=\tilde{K}_1(0)=0$.
\medbreak
One can now state the main result of the paper.
\begin{thm}\label{thm2.1} Let the assumptions of Theorem \ref{thm1.1} be in force and $(a,\upsilon,\theta)$ be  the
corresponding global solution.  Let  the real number $s_1$ satisfy
\begin{equation}\label{eq:s1}
\max\Bigl(0,2-\frac d2\Bigr)\leq s_1\leq s_0
\end{equation}
with $s_0\triangleq\frac{2d}p-\frac d2$.
There exists a constant $c>0$ depending only on
$p,d,\lambda,\mu,P, \kappa, C_v,\bar{\rho}, \bar{\mathcal{T}}$ such that if
\begin{equation}\label{R-E10}
{\cD}_{p,0}\triangleq \|(a_0, \upsilon_0, \theta_0)\|_{\dot{B}^{-s_{1}}_{2,\infty}}^\ell \leq c,
\end{equation}
then  for all small enough $\varepsilon>0,$  we have
\begin{equation}\label{R-E11}
{\cD}_{p}(t)\lesssim \bigl({\cD}_{p,0}+\|(\nabla a_0,\upsilon_0)\|^{h}_{\dot B^{\frac dp-1}_{p,1}}+\|\theta_0\|_{\dot B^{\frac dp-2}_{p,1}}^h\bigr)\quad\hbox{for all }\ t\geq0,
\end{equation}
where, setting   $\alpha\triangleq  s_1+\frac d2+\frac 12-\varepsilon,$ the norm ${\cD}_{p}(t)$ is defined by
\begin{multline}\label{R-E12}
{\cD}_{p}(t)\triangleq\sup_{s\in[\varepsilon-s_{1},\frac d2+1]}\|\langle\tau\rangle^{\frac {s_1+s}2}(a,\upsilon,\theta)\|_{L^\infty_t(\dot B^s_{2,1})}^\ell\\
+\|\langle\tau\rangle^\alpha a\|_{\wt L^\infty_t(\dot B^{\frac dp}_{p,1})}^h
+\|\langle\tau\rangle^\alpha \upsilon\|_{\wt L^\infty_t(\dot B^{\frac dp-1}_{p,1})}^h
+\|\langle\tau\rangle^\alpha \theta\|_{\wt L^\infty_t(\dot B^{\frac dp-2}_{p,1})}^h
+\|\tau^\alpha(\nabla \upsilon,\theta)\|_{\wt L^\infty_t(\dot B^{\frac dp}_{p,1})}^h.
\end{multline}
\end{thm}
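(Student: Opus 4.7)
The plan is to follow the low/high frequency splitting strategy developed in \cite{DX} for the barotropic case, adapted to the extra temperature coupling in \eqref{R-E9}, and to close a bootstrap argument on the functional $\cD_p(t)$ using the global bound $\cX_p(t)\leq C\cX_{p,0}$ from Theorem \ref{thm1.1}. The backbone is a linear analysis of the symbol of
$$L(D)\begin{pmatrix}a\\ \upsilon\\ \theta\end{pmatrix}\triangleq\begin{pmatrix}\div\upsilon\\ -\wt\cA\upsilon+\nabla a+\gamma\nabla\theta\\ -\beta\Delta\theta+\gamma\div\upsilon\end{pmatrix}.$$
In low frequencies, after projecting $\upsilon$ on its curl-free part (the incompressible part decouples and is purely parabolic), one checks that the principal symbol has three eigenvalues whose real parts are $\simeq|\xi|^2$, so that the low-frequency semigroup behaves like the heat semigroup. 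Propagating the negative Besov regularity $\dot B^{-s_1}_{2,\infty}$ at low frequencies along this linearization (the condition $s_1\geq 2-d/2$ exactly ensures that $-s_1\leq d/2-1$, i.e. one stays below the energy regularity) and handling the nonlinear source terms $f,g,k$ as forcing, I would first establish
$$\|(a,\upsilon,\theta)\|_{\widetilde L^\infty_t(\dot B^{-s_1}_{2,\infty})}^\ell\lesssim \cD_{p,0}+\int_0^t\|(f,g,k)\|^\ell_{\dot B^{-s_1}_{2,\infty}}\,d\tau.$$
The nonlinear right-hand side is controlled by $\cX_p(t)\cD_p(t)$ via paraproduct and remainder estimates in $L^2$-Besov spaces, using that every product $ab$ in $f,g,k$ involves one factor with positive regularity $\geq d/p$ sitting in $L^1_t$, which is provided by Theorem \ref{thm1.1}.

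Once the $\dot B^{-s_1}_{2,\infty}$ bound is closed, the low-frequency weighted estimates of $\cD_p(t)$ follow by real interpolation: for $s\in[\varepsilon-s_1,d/2+1]$ one has
$$\|f\|^\ell_{\dot B^s_{2,1}}\lesssim \bigl(\|f\|^\ell_{\dot B^{-s_1}_{2,\infty}}\bigr)^{1-\vartheta}\bigl(\|f\|^\ell_{\dot B^{s+\varepsilon}_{2,\infty}}\bigr)^\vartheta\quad\hbox{with}\quad s=-s_1(1-\vartheta)+(s+\varepsilon)\vartheta,$$
combined with a Duhamel-type time-decay estimate for the linearized heat-like low-frequency evolution that yields $\langle t\rangle^{-(s_1+s)/2}$ when the data is in $\dot B^{-s_1}_{2,\infty}$. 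The strict upper bound $s< d/2+1$ together with the tail assumption $p\leq 2d/(d-2)$ (so that $s_0=2d/p-d/2\geq d/2-2$, making the interpolation chain cover the natural range) is what makes the range $[\varepsilon-s_1,d/2+1]$ admissible.

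For the high-frequency part, I would work directly on \eqref{R-E9} after spectral localization $\dot\Delta_j$ with $j\geq J_0$, using the effective-velocity method of \cite{H2,D2}: introduce $w\triangleq\upsilon+(-\wt\cA)^{-1}\nabla a$ to decouple the density equation into a damped equation $\d_t a+\Lambda^2(-\wt\cA)^{-1}a=\cdots$, so that $a^h$ is exponentially damped, while $\upsilon^h$ and $\theta^h$ gain parabolic smoothing at the rate $\mu 2^{2j}$ and $\beta 2^{2j}$ respectively. Multiplying the resulting $\ell^1$-in-$j$ estimates by $\langle t\rangle^\alpha$, the key identity
$$\langle t\rangle^\alpha\d_t f_j+c_j\langle t\rangle^\alpha f_j=\langle t\rangle^\alpha F_j,\qquad \d_t(\langle t\rangle^\alpha f_j)=\alpha\langle t\rangle^{\alpha-1}\langle t\rangle' f_j+\langle t\rangle^\alpha\d_t f_j,$$
absorbs the $\alpha\langle t\rangle^{\alpha-1}$ term either in the damping/parabolic dissipation of the high-frequency block or, for the very bottom threshold frequencies, by transferring it to the low-frequency estimate already controlled above. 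The choice $\alpha=s_1+d/2+1/2-\varepsilon$ is forced by the rate at which one can exchange one derivative of the low-frequency decay for an extra $\langle t\rangle^{1/2}$ factor on the high-frequency block, and the last term $\|\tau^\alpha(\nabla\upsilon,\theta)\|^h_{\widetilde L^\infty_t(\dot B^{d/p}_{p,1})}$ is obtained via the parabolic maximal-regularity gain.

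The main obstacle, and the point where the current proof has to improve over \cite{DX}, is the nonlinear estimates for $g$ and $k$ at low frequencies: terms such as $I(a)\wt\cA\upsilon$, $K_2(a)\nabla\theta$, $\theta\nabla K_3(a)$ and $Q(\nabla\upsilon,\nabla\upsilon)/(1+a)$ must be bounded in $\dot B^{-s_1}_{2,\infty}$ with a time-integrable control, even though the individual factors live in $L^p$-Besov spaces at high frequencies and in $L^2$-Besov spaces at low frequencies. I would decompose each factor as low$+$high and use (i) for low$\cdot$low interactions, a paraproduct estimate in $L^2$ with Bernstein's inequality to pay for the negative index $-s_1$, gaining the decay $\langle t\rangle^{-(s_1+d/2)/2}$ times itself; (ii) for high$\cdot$anything interactions, the Hölder inequality with conjugate exponent $p'$ plus embedding $\dot B^{d/p-k}_{p,1}\hookrightarrow \dot B^{d/2-k-\delta}_{2,1}$ coming from $p\geq 2$, which converts the $L^p$ high-frequency decay into an $L^2$-based decay at the cost of an $\varepsilon$-loss. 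Verifying that the total time integral is bounded by $\cD_{p,0}+(\cX_{p,0}+\cD_p(t))^2$ closes the bootstrap for small data and gives \eqref{R-E11}.
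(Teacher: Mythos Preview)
Your three-step skeleton (low-frequency Duhamel, high-frequency effective-velocity with time weight, parabolic gain for $(\nabla\upsilon,\theta)^h$) matches the paper's architecture. However, your low-frequency mechanism is different from the paper's, and as written it is somewhat muddled.

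The paper does \emph{not} first propagate the $\dot B^{-s_1}_{2,\infty}$ norm and then interpolate. Instead it works directly at every regularity $s\in[\varepsilon-s_1,\tfrac d2+1]$ via Duhamel and the heat-like semigroup bound
\[
\langle t\rangle^{\frac{s_1+s}{2}}\|E(t)U_0\|_{\dot B^s_{2,1}}^\ell\lesssim\|U_0\|_{\dot B^{-s_1}_{2,\infty}}^\ell,
\]
so that the nonlinear contribution becomes the time-convolution
$\int_0^t\langle t-\tau\rangle^{-\frac{s_1+s}{2}}\|(f,g,k)(\tau)\|^\ell_{\dot B^{-s_1}_{2,\infty}}\,d\tau$,
which is closed using the elementary inequality
$\int_0^t\langle t-\tau\rangle^{-\sigma_1}\langle\tau\rangle^{-\sigma_2}\,d\tau\lesssim\langle t\rangle^{-\sigma_1}$ for $\sigma_2>1\geq\sigma_1$. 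The nonlinear $\dot B^{-s_1}_{2,\infty}$ bound is obtained \emph{with time weights from $\cD_p$}, not merely from the $L^1_t$ integrability provided by $\cX_p$; this is why a genuine bootstrap on $\cD_p$ is needed, not a two-stage ``first $\dot B^{-s_1}$ bound, then interpolate'' procedure. Your sentence combining interpolation with a subsequent Duhamel decay mixes two distinct methodologies (the Guo--Wang/Xu--Kawashima Lyapunov-interpolation scheme versus direct Duhamel) without committing to either.

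A second, more concrete difference: the paper does \emph{not} estimate $(f,g,k)$ in $\dot B^{-s_1}_{2,\infty}$ by paraproduct calculus. It uses the embedding $L^r\hookrightarrow\dot B^{-s_1}_{2,\infty}$ with $\tfrac1r=\tfrac12+\tfrac{s_1}{d}$, so that quadratic terms reduce to the elementary H\"older law $\|FG\|_{L^r}\leq\|F\|_{L^{2r}}\|G\|_{L^{2r}}$, together with $\dot B^\sigma_{p,1}\hookrightarrow L^{2r}$ for $\sigma=\tfrac dp-\tfrac d{2r}$. Condition \eqref{eq:s1} is \emph{exactly} the statement $\tfrac p2\leq r\leq\min(2,\tfrac d2)$, which is what makes these embeddings compatible with the regularity available from $\cX_p$ and $\cD_p$; your explanation of \eqref{eq:s1} via ``$-s_1\leq d/2-1$'' misses this point.

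There is also a genuine obstacle your outline does not address: the term $I(a)\Delta\theta^h$. When $p>\tfrac d2$, one has $\Delta\theta^h\in\dot B^{\frac dp-2}_{p,1}$ with $\tfrac dp-2<0$, so $\Delta\theta^h$ is in no Lebesgue space and the $L^r$ trick (and any na\"\i ve H\"older/paraproduct bound) breaks down. The paper handles this via the refined low-frequency product estimate of Proposition~\ref{prop3.2} (taken from \cite{DX}), applied with $\sigma=2-\tfrac dp$; this is the single most delicate step of the low-frequency analysis and cannot be absorbed into the generic ``low$\cdot$low / high$\cdot$anything'' scheme you sketch.

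Two minor points: your effective velocity $w=\upsilon+(-\wt\cA)^{-1}\nabla a$ is not the one used; the paper takes $w=\nabla(-\Delta)^{-1}(a-\div\upsilon)$, which gives exactly $\partial_t a+a=f-\div w$ (a damped, not exponentially damped, equation) and a heat equation for $w$. And in Step~2 the paper does not differentiate $\langle t\rangle^\alpha f_j$; it first integrates the block inequality to get $e^{-c_0 t}$ decay per block, then takes $\sup_t\langle t\rangle^\alpha\int_0^t e^{-c_0(t-\tau)}(\cdot)\,d\tau$, splitting $[0,1]\cup[1,t]$ to avoid ever seeing the $\alpha\langle t\rangle^{\alpha-1}$ commutator you describe.
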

The above statement deserves some comments:
\begin{itemize}
\item A similar result
may be proved if the physical coefficients $\lambda,$ $\mu$ and $\kappa$ depend smoothly on
the density. Here, we took them constant to avoid more technicalities.
\item To the best of our knowledge, whether well-posedness holds true in critical spaces for the
full Navier-Stokes equations in $\R^2$ is an open question.  At the same time,
it has been proved for slightly more regular data (see \cite{D2,D3}) and we believe
our approach to yield decay estimates for the corresponding solutions (computations
are expected to be wilder, though).
 \item
 For $p=2$ and $s_1=\frac d2,$  hypothesis \eqref{R-E10}  is less restrictive
than the standard $L^1$ condition
first because it only concerns the low frequencies and, second,
because we have  $L^{r}\hookrightarrow\dot{B}^{-s_1}_{2,\infty}$ with $\frac1r=\frac12+\frac{s_1}d\cdotp$
A similar  assumption (for all frequencies and for $p=2$ and $s_1=\frac d2$)  appears in
several recent results :  for the Boltzmann equation in the work by Sohinger and Strain \cite{SS},
 and for  hyperbolic systems with dissipation in the joint paper  of Kawashima with
 the second author \cite{XK1,XK2}.
One can also  mention the work by Guo and Wang \cite{GW} that
replaces the $L^1$ assumption by  homogeneous Sobolev norms  of negative order.
\item
Compared to the barotropic case studied in \cite{DX},  the decay functional ${\cD}_{p}$
contains an additional  decay information on the temperature.
Furthermore, we improved the decay exponent for high frequencies :
in the case $s_1=s_0\triangleq\frac{2d}p-\frac d2$ (which is the only one that has been considered
in \cite{DX}), we get   $\alpha=\frac {2d}{p}+\frac 12-\varepsilon$ instead of~$1.$
We believe exponent $\frac{s_1}2+\frac d2+\frac12$ as well as the upper
bound $s\leq\frac d2+1$ for the first term of $\cD_p$ to be optimal
(see the very end of the present paper for more explanations).
\item If  we replace \eqref{R-E10} with the following slightly stronger hypothesis:
$$
\|(a_0, \upsilon_0, \theta_0)\|_{\dot B^{-s_1}_{2,1}}^\ell \ll 1
$$
then one can take $\ep=0$ in the definitions of $\cD_p$ and $\alpha.$
\item We expect to have similar  decay estimates  for $s_1$ belonging
to the whole range $(1-\frac d2,\frac{2d}p-\frac d2].$
As the analysis for  $s_1<\max(0,2-\frac d2)$  is much more technical
(it relies on tricky product estimates in Besov spaces),
we   chose to present only  the case \eqref{eq:s1}
which is already more general than that  in \cite{DX}.
\end{itemize}
\medbreak
{}From Theorem \ref{thm2.1} and standard embedding,  we readily get  the following algebraic decay rates for the $L^p$ norms of the solution:
\begin{cor}\label{cor2.1}
 Denote $\Lambda^{s}f\triangleq\mathcal{F}^{-1}(|\cdot|^{s}\mathcal{F}f)$.
The solution   of  Theorem \ref{thm1.1} satisfies
$$\displaylines{
\|\Lambda^{s}(\rho-\bar{\rho})\|_{L^p}
\lesssim \bigl(\cD_{p,0}
+\|(\nabla a_0, \upsilon_0)\|_{\dot B^{\frac dp-1}_{p,1}}^h+\|\theta_0\|_{\dot B^{\frac dp-2}_{p,1}}^h\bigr)\langle t\rangle^{-\frac {s_1+s}2}
\  \hbox{ if } \  -s_1<s\leq\frac dp,\cr
\|\Lambda^{s}u\|_{L^p}
\lesssim \bigl(\cD_{p,0}+\|(\nabla a_0, \upsilon_0)\|_{\dot B^{\frac dp-1}_{p,1}}^h+\|\theta_0\|_{\dot B^{\frac dp-2}_{p,1}}^h\bigr)\langle t\rangle^{-\frac {s_{1}+s}2}
\ \hbox{ if } \  -s_1<s\leq\frac dp-1,\cr
\|\Lambda^{s}(\mathcal{T}-\bar{\mathcal{T}})\|_{L^p}
\lesssim \bigl(\cD_{p,0}+\|(\nabla a_0, \upsilon_0)\|_{\dot B^{\frac dp-1}_{p,1}}^h+\|\theta_0\|_{\dot B^{\frac dp-2}_{p,1}}^h\bigr)\langle t\rangle^{-\frac {s_{1}+s}2}
\hbox{ if }  -s_1<s\leq\frac dp-2.}
$$\end{cor}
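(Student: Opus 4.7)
The plan is to translate the information packed into $\cD_{p}(t)$ into pointwise-in-time $L^{p}$ decay rates, by standard Besov embeddings applied separately on low and high frequencies. For every function $f$ I start from
$$\|\Lambda^{s}f\|_{L^{p}}\leq \|\Lambda^{s}f^{\ell}\|_{L^{p}}+\|\Lambda^{s}f^{h}\|_{L^{p}}.$$

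For the low frequency contribution, Bernstein's inequality gives $\|\Lambda^{s}f^{\ell}\|_{L^{p}}\lesssim \|f\|_{\dot B^{s}_{p,1}}^{\ell}$, and since $p\geq 2$ the embedding $\dot B^{\sigma}_{2,1}\hookrightarrow \dot B^{\sigma-\frac d2+\frac dp}_{p,1}$ yields
$$\|\Lambda^{s}f^{\ell}\|_{L^{p}}\lesssim \|f\|^{\ell}_{\dot B^{s+\frac d2-\frac dp}_{2,1}}.$$
Setting $\sigma\triangleq s+\frac d2-\frac dp$, the range condition $-s_{1}<s\leq \frac dp$ together with a sufficiently small $\varepsilon$ places $\sigma$ in the admissible window $[\varepsilon-s_{1},\frac d2+1]$. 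Hence the first block of $\cD_{p}(t)$ controls
$$\|f\|^{\ell}_{\dot B^{\sigma}_{2,1}}\lesssim \cD_{p}(t)\,\langle t\rangle^{-\frac{s_{1}+\sigma}{2}}\leq \cD_{p}(t)\,\langle t\rangle^{-\frac{s_{1}+s}{2}},$$
where the final inequality uses $\sigma\geq s$ (as $\frac d2\geq \frac dp$) and $\langle t\rangle\geq 1$. This disposes of the low-frequency part of $a$, $\upsilon$ and $\theta$ at once.

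For the high frequency part, Bernstein again gives $\|\Lambda^{s}f^{h}\|_{L^{p}}\lesssim \|f\|^{h}_{\dot B^{s}_{p,1}}$, and monotonicity of high-frequency Besov norms in the regularity index supplies the estimates $\|a\|^{h}_{\dot B^{s}_{p,1}}\lesssim \|a\|^{h}_{\dot B^{\frac dp}_{p,1}}$ for $s\leq \frac dp$, $\|\upsilon\|^{h}_{\dot B^{s}_{p,1}}\lesssim \|\upsilon\|^{h}_{\dot B^{\frac dp-1}_{p,1}}$ for $s\leq \frac dp-1$, and $\|\theta\|^{h}_{\dot B^{s}_{p,1}}\lesssim \|\theta\|^{h}_{\dot B^{\frac dp-2}_{p,1}}$ for $s\leq \frac dp-2$. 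Via the embedding $\wt L^{\infty}_{t}(\dot B^{\cdot}_{p,1})\hookrightarrow L^{\infty}_{t}(\dot B^{\cdot}_{p,1})$ (Minkowski), the corresponding high-frequency blocks of $\cD_{p}$ imply pointwise bounds of order $\cD_{p}(t)\,\langle t\rangle^{-\alpha}$ with $\alpha=s_{1}+\frac d2+\frac 12-\varepsilon$. Since $s\leq \frac dp\leq \frac d2$ and $s_{1}\geq 0$, the inequality $\alpha\geq \frac{s_{1}+s}{2}$ is automatic for $\varepsilon$ small, so $\langle t\rangle^{-\alpha}\leq \langle t\rangle^{-\frac{s_{1}+s}{2}}$ and the required decay on the high-frequency side follows.

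Adding the two contributions and substituting the global-in-time estimate
$$\cD_{p}(t)\lesssim \cD_{p,0}+\|(\nabla a_{0},\upsilon_{0})\|^{h}_{\dot B^{\frac dp-1}_{p,1}}+\|\theta_{0}\|^{h}_{\dot B^{\frac dp-2}_{p,1}}$$
granted by Theorem \ref{thm2.1} yields the three claimed inequalities. The whole argument is really a careful accounting of indices; the only point deserving attention is the verification that the shifted low-frequency index $\sigma=s+\frac d2-\frac dp$ sits inside $[\varepsilon-s_{1},\frac d2+1]$ after $\varepsilon$ is chosen small, which is what one might loosely call the main ``obstacle''.
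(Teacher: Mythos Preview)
Your proof is correct and follows essentially the same approach as the paper: split into low and high frequencies, use the embedding $\dot B^{\sigma}_{2,1}\hookrightarrow \dot B^{\sigma-\frac d2+\frac dp}_{p,1}$ (together with the low-frequency monotonicity in the regularity index) for the low part, use the high-frequency blocks of $\cD_p$ and the inequality $\alpha\geq\frac{s_1+s}{2}$ for the high part, and invoke Theorem~\ref{thm2.1}. The only cosmetic difference is that the paper bounds the low-frequency piece by $\|f\|^\ell_{\dot B^{s}_{2,1}}$ directly (via the low-frequency embedding $\dot B^{s-d(\frac12-\frac1p)}_{p,1}\hookrightarrow\dot B^{s}_{p,1}$), whereas you use the shifted index $\sigma=s+\frac d2-\frac dp$; both are equivalent here.
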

\begin{rem}
Taking $p=2, $  $s_1=d/2$ and $s=0$ in Corollary \ref{cor2.1} leads back to the standard optimal $L^{1}$-$L^{2}$ decay rates in \eqref{R-E7}, but for much less regular
global solutions.  Also, note
that the derivative index can take both negative and nonnegative values rather than nonnegative integers only, and our results can  thus be regarded as the natural extension of the  classical results of \cite{MN2}.
\end{rem}
Let us briefly present the strategy for proving Theorem \ref{thm2.1}.
The usual approach to get decay estimates  of the type \eqref{R-E7} is to
take advantage of $L^1$-$L^2$ decay estimates for the linear system corresponding
to the left-hand side  of \eqref{R-E9}, treating the nonlinear right-hand side $(f,g,k)$
by means of Duhamel formula (see \cite{Ka,MN2} and references therein). That  basic argument
fails  in the  critical regularity spaces, though, as  one cannot afford any \textit{loss of regularity}
for the high frequency part of the solution (for example, $u\cdot\nabla a$ induces  a loss of one
derivative, while there is no smoothing for $a,$  solution of a  transport equation).
Furthermore, the standard approach completely ignores the fact that the semi-group associated to \eqref{R-E9} behaves
differently for low and high frequencies.
As regards the low frequency part of the solution,  the linearized equations
behave like the heat equation (at least in a $L^2$ type framework)
and  it is possible to adapt the standard method relying on $L^1$-$L^2$
estimate. The only  difference is that  owing to our $L^p$ type assumption
on the high frequencies and our more general low frequency assumption \eqref{R-E10},
  the quadratic  terms are in $L^{r}$ with $\frac1r=\frac12+\frac{s_1}d,$
and it is thus natural to resort to $L^{r}$-$L^2$ estimates (or sometimes
to $\dot B^{-s_1}_{2,\infty}$-$L^2$ ones) rather than  $L^1$-$L^2$ estimates.

We proceed differently for the analysis of the high frequencies decay of the solution.
The idea is to work with  a so-called  ``effective velocity" $w$ (introduced  by D. Hoff in \cite{Hoff}
and first used in the context of critical regularity by B. Haspot in \cite{H2})
such that, up to low order terms,
 the divergence-free part of $u$, the temperature $\theta$ and $w$ fulfill a
 parabolic system while   $a$ satisfies a damped transport equation.
Performing $L^p$ estimates  directly on that system after localization in the Fourier space
and using suitable commutator estimates
to handle the convection term in the density equation, it is possible
to eventually get
 \textit{optimal decay exponents} for high frequencies.
\medbreak
The rest of the paper unfolds as follows. In Section~\ref{sec:3}, we introduce some notation,
recall basic results concerning  Besov spaces, paradifferential calculus, product
and commutator estimates. Section~\ref{sec:4} is devoted to the
proof of Theorem \ref{thm2.1} and of Corollary \ref{cor2.1}.


\setcounter{equation}{0}
\section{Notations, functional spaces and basic tools} \label{sec:3}
Throughout the paper, $C$ stands for a harmless  positive ``constant", the meaning of which is clear
 from the context, and we sometimes write $f\lesssim g$ instead of   $f\leq Cg.$
 The notation  $f\thickapprox g$ means that $f\lesssim g$ and $g\lesssim f$.
   For any  Banach space $X$ and  $f,g\in X,$ we agree that
 $\|(f,g)\|_{X}\triangleq\|f\|_{X}+\|g\|_{X}.$
Finally, for all  $T>0$  and $\rho\in[1,+\infty],$ we denote by $L^\rho_T(X)\triangleq L^{\infty}([0,T];X)$ the set of measurable functions $f:[0,T]\to X$ such that  $t\mapsto \|f(t)\|_{X}$
is in $L^\rho(0,T).$
\medbreak
Let us next recall the definition and a few basic properties
 of Besov spaces  (more details may be found  in e.g. Chap. 2 and 3 of \cite{BCD}).
We start with  a dyadic decomposition
in  Fourier variables: fix some  smooth radial non increasing function $\chi$
supported in $B(0,\frac 43)$ and with value $1$ on $B(0,\frac34),$ then set
$\varphi(\xi)=\chi(\xi/2)-\chi(\xi)$ so that
$$
\qquad\sum_{j\in\Z}\varphi(2^{-j}\cdot)=1\ \hbox{ in }\ \R^d\setminus\{0\}
\quad\hbox{and}\quad \Supp\varphi\subset \big\{\xi\in\R^d : 3/4\leq|\xi|\leq8/3\big\}\cdotp
$$
The homogeneous dyadic blocks $\ddj$ are defined by
$$\ddj f\triangleq\varphi(2^{-j}D)f=\cF^{-1}(\varphi(2^{-j}\cdot)\cF f)=2^{jd}h(2^j\cdot)\star f
\quad\hbox{with}\quad h\triangleq\cF^{-1}\varphi.
$$
The \emph{Littlewood-Paley decomposition} of a general  tempered distribution $f$ reads
\begin{equation}\label{R-E13}
f=\sum_{j\in\Z}\ddj f.
\end{equation}
That equality holds true in the tempered distribution meaning, if  $f$ satisfies
\begin{equation}\label{R-E14}
\lim_{j\rightarrow-\infty}\|\dot S_jf\|_{L^\infty}=0,
\end{equation}
where $\dot S_j$ stands for the low frequency cut-off  $\dot S_j\triangleq\chi(2^{-j}D)$.
\medbreak
In many parts of the paper, we use the notation $z^\ell$ and $z^h$  with
\begin{equation}\label{eq:lh}
z^\ell:=\dot S_{j_0}z\quad\hbox{and}\quad
z^h:=({\rm Id}-\dot S_{j_0})z,
\end{equation}
where the value of the parameter $j_0$ depends  on the coefficients of \eqref{R-E9} through  the proof of the main theorem.
\medbreak
Next, we come to the definition of the homogeneous Besov spaces.
\begin{defn}\label{defn3.1}
 For $s\in\R$ and
$1\leq p,r\leq\infty,$  the homogeneous Besov space $\dot B^s_{p,r}$ is  the
set of tempered distributions $f$ satisfying \eqref{R-E14} and
$$\|f\|_{\dot{B}^{s}_{p,r}}
\triangleq\Big\|\bigl(2^{js}\|\dot{\Delta}_{j}f\|_{L^{p}}\bigr)\Big\|_{\ell^r(\Z)}<\infty.
$$
\end{defn}
In order to state optimal regularity estimates for the heat equation, we need
 the  following  semi-norms  first introduced by J.-Y. Chemin in \cite{Chemin}
for all $0\leq T\leq+\infty,$ $s\in\mathbb{R}$ and  $1\leq r,p,\varrho\leq\infty$:
\begin{equation}\label{eq:tilde}
\|f\|_{\widetilde{L}^{\varrho}_{T}(\dot{B}^{s}_{p,r})}\triangleq\Big\|\bigl(2^{js}\|\dot{\Delta}_{j}f\|_{L^{\varrho}_{T}(L^{p})}\bigr)\Big\|_{\ell^r(\Z)}.\end{equation}
 Index $T$ will be sometimes omitted if $T=+\infty,$ and we  denote
\begin{equation}\label{R-E915}
\widetilde{\mathcal{C}}_{b}(\R_+;\dot{B}^{s}_{p,r})\triangleq\bigl\{f\in \mathcal{C}(\R_+;\dot{B}^{s}_{p,r})\ s.t.\
\|f\|_{\widetilde{L}^{\infty}(\dot{B}^{s}_{p,r})}<\infty\bigr\}\cdotp
\end{equation}
Recall (see e.g. \cite{BCD}) the following optimal regularity estimates for the heat equation:
\begin{prop}\label{prop3.6}
Let $\sigma\in \mathbb{R}$, $(p,r)\in [1,\infty]^2$ and $1\leq \rho_{2}\leq\rho_{1}\leq\infty$.  Let $u$  satisfy
$$\left\{\begin{array}{lll}\d_tu-\mu\Delta u=f,\\
u_{|t=0}=u_0.\end{array}
\right.$$
Then  for all $T>0$ the following a priori estimate is fulfilled:
\begin{equation}\label{R-E28}\mu^{\frac1{\rho_1}}\|u\|_{\tilde L_{T}^{\rho_1}(\dot B^{\sigma+\frac2{\rho_1}}_{p,r})}\lesssim
\|u_0\|_{\dot B^\sigma_{p,r}}+\mu^{\frac{1}{\rho_2}-1}\|f\|_{\tilde L^{\rho_2}_{T}(\dot B^{\sigma-2+\frac2{\rho_2}}_{p,r})}.
\end{equation}
The same estimate holds true  (with a different dependency with respect to  the viscosity coefficients) for the solutions to the following  \emph{Lam\'e system}
\begin{equation*}
\left\{\begin{array}{lll}\d_tu-\mu\Delta u-(\lambda+\mu)\nabla\div u=f,\\
u_{|t=0}=u_0,\end{array}
\right.\end{equation*}
whenever  $\mu>0$ and $\lambda+2\mu>0.$
\end{prop}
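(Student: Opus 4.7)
The approach is the classical one: dyadic frequency localization combined with a heat semigroup decay estimate on spectrally localized functions, then Young's convolution inequality in time. Since $\ddj$ is a Fourier multiplier commuting with $\d_t$ and $\Delta$, the block $u_j\triangleq\ddj u$ solves $\d_t u_j-\mu\Delta u_j=\ddj f$ with initial datum $\ddj u_0$, and Duhamel's formula gives
$$u_j(t)=e^{t\mu\Delta}\ddj u_0+\int_0^t e^{(t-s)\mu\Delta}\ddj f(s)\,ds.$$

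The key technical input is the frequency-localized heat bound (essentially Lemma~2.4 of \cite{BCD}): there exists $c_0>0$ independent of $j$ and $\mu$ such that, whenever $\Supp\wh v$ lies in the annulus $\{3\cdot 2^{j-2}\leq|\xi|\leq 8\cdot 2^{j}/3\}$,
$$\|e^{t\mu\Delta}v\|_{L^p}\lesssim e^{-c_0\mu 2^{2j}t}\|v\|_{L^p}\qquad(t\geq 0).$$
This is proved by writing $e^{t\mu\Delta}v=\cF^{-1}\bigl(e^{-t\mu|\xi|^2}\wt\varphi(2^{-j}\xi)\bigr)\star v$ for a fatter bump $\wt\varphi$ coinciding with $1$ on $\Supp\varphi$, then rescaling in $\xi$ and bounding the $L^1$ norm of the resulting kernel by a standard stationary-phase style estimate. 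Inserting this bound into the Duhamel formula and applying Young's convolution inequality in time on $(0,T)$ with exponents satisfying $1+1/\rho_1=1/\rho+1/\rho_2$ (which is admissible as $\rho_1\geq\rho_2$), together with $\|e^{-c_0\mu 2^{2j}\cdot}\|_{L^\rho(\R_+)}\thickapprox(\mu 2^{2j})^{-1/\rho}$, yields
$$\|u_j\|_{L^{\rho_1}_T(L^p)}\lesssim(\mu 2^{2j})^{-1/\rho_1}\|\ddj u_0\|_{L^p}+(\mu 2^{2j})^{1/\rho_2-1-1/\rho_1}\|\ddj f\|_{L^{\rho_2}_T(L^p)}.$$
Multiplying by $\mu^{1/\rho_1}\,2^{j(\sigma+2/\rho_1)}$ and taking the $\ell^r(\Z)$ norm in $j$, recalling the definition \eqref{eq:tilde} of the Chemin--Lerner norm, gives the announced estimate \eqref{R-E28}.

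For the Lam\'e system, I would use the Helmholtz decomposition $u=\cP u+\cQ u$, where $\cP\triangleq\Id+(-\Delta)^{-1}\nabla\div$ projects onto divergence-free vector fields and $\cQ\triangleq\Id-\cP$ is its complement. Since $\cP\nabla\div=0$ and $\cQ$ commutes with $\nabla\div$, the field $\cP u$ solves the heat equation with viscosity $\mu$, while $\cQ u$ solves the heat equation with viscosity $\nu\triangleq\lambda+2\mu$. As $\cP$ and $\cQ$ are homogeneous Fourier multipliers of degree zero, they act continuously on every $\dot B^\sigma_{p,r}$ with norm independent of $j$; applying the scalar estimate just obtained to each piece separately and summing produces the same inequality, this time with constants depending on both $\mu$ and $\nu$. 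The only delicate step is really the uniformity in $j$ and $\mu$ of the constant $c_0$ in the spectrally localized decay, which is precisely where the positivity assumptions $\mu>0$ (respectively $\mu>0$ and $\lambda+2\mu>0$) are used; everything else amounts to bookkeeping of exponents through Young's inequality.
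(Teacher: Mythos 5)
Your argument is correct and is exactly the standard proof of this estimate: the paper itself gives no proof but simply cites \cite{BCD}, and the spectral localization, the uniform exponential decay $\|e^{t\mu\Delta}v\|_{L^p}\lesssim e^{-c_0\mu 2^{2j}t}\|v\|_{L^p}$ for annulus-supported $v$, Young's inequality in time, and the Helmholtz splitting of the Lam\'e system into two heat equations with viscosities $\mu$ and $\lambda+2\mu$ are precisely the ingredients of the proof found there. The bookkeeping of the powers of $\mu$ and $2^j$ checks out, so nothing is missing.
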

 The properties of continuity for the product, commutators and composition involving
 standard Besov norms  remain true  when using  the  semi-norms defined in \eqref{eq:tilde}. The general principle
is  that the time Lebesgue exponent has to be treated according to H\"{o}lder inequality.
Furthermore, Minkowski's inequality allows to compare
$\|\cdot\|_{\widetilde{L}^{\varrho}_{T}(\dot{B}^{s}_{p,r})}$ with the more standard
Lebesgue-Besov semi-norms of  $L^{\varrho}_{T}(\dot{B}^{s}_{p,r})$ as follows:
\begin{equation}\label{R-E15}
\|f\|_{\widetilde{L}^{\varrho}_{T}(\dot{B}^{s}_{p,r})}\leq\|f\|_{L^{\varrho}_{T}(\dot{B}^{s}_{p,r})}\,\,\,
\mbox{if }\,\, r\geq\varrho,\ \ \ \
\|f\|_{\widetilde{L}^{\varrho}_{T}(\dot{B}^{s}_{p,r})}\geq\|f\|_{L^{\varrho}_{T}(\dot{B}^{s}_{p,r})}\,\,\,
\mbox{if }\,\, r\leq\varrho.
\end{equation}

Restricting Besov  norms to the low or high frequencies parts of distributions plays
a fundamental role in our approach.  For that reason, we
shall often use the following notation for some suitable  integer $j_0$\footnote{For technical reasons, it is convenient to have  a small
overlap between low and high frequencies.}:
$$\displaylines{
\|f\|_{\dot B^s_{p,1}}^\ell\triangleq\sum_{j\leq j_0} 2^{js}\|\ddj f\|_{L^p}\quad\hbox{and}\quad
\|f\|_{\dot B^s_{p,1}}^h\triangleq\sum_{j\geq j_0-1} 2^{js}\|\ddj f\|_{L^p},\cr
\|f\|_{\wt L^\infty_T(\dot B^s_{p,1})}^\ell\triangleq\sum_{j\leq j_0} 2^{js}\|\ddj f\|_{L^\infty_T(L^p)}\quad\hbox{and}\quad
\|f\|_{\wt L^\infty_T(\dot B^s_{p,1})}^h\triangleq\sum_{j\geq j_0-1} 2^{js}\|\ddj f\|_{L^\infty_T(L^p)}.}
$$
We shall use
 the following  nonlinear generalization  of the Bernstein inequality  (see e.g. Lemma 8 in \cite{D1}) that
 will be the key to controlling the  $L^p$ norms of  the solution to the spectrally localized
 system \eqref{R-E9}:
\begin{prop}\label{prop3.4}
There exists $c$ depending only on $d,$ $R_1$  and $R_2$
so that  if
\begin{equation}\label{eq:revBerns}
\mathrm{Supp}\,\mathcal{F}f\subset \{\xi\in \mathbb{R}^{d}:
R_{1}\lambda\leq|\xi|\leq R_{2}\lambda\},\end{equation}
 is fulfilled then for all $1<p<\infty,$
\begin{eqnarray}\label{R-E26}&&c\lambda^2\biggl(\frac{p-1}p\biggr)\int_{\R^d}|f|^p\,dx\leq (p-1)\int_{\R^d}|\nabla f|^2|f|^{p-2}\,dx
=-\int_{\R^d}\Delta f\: |f|^{p-2}f\,dx.
\end{eqnarray}
\end{prop}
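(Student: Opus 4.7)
The equality on the right follows from integration by parts using the pointwise identity $\nabla(|f|^{p-2}f)=(p-1)|f|^{p-2}\nabla f$, valid wherever $f\ne 0$ (for real $f$; the complex case is handled similarly on real and imaginary parts). Combined with $-\int \Delta f\cdot|f|^{p-2}f\,dx=\int \nabla f\cdot\nabla(|f|^{p-2}f)\,dx$, this gives the stated identity.

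For the inequality, my plan is to exploit the spectral localization of $f$ via heat-semigroup decay. Pick a radial cutoff $\tilde\varphi\in C_c^\infty(\R^d\setminus\{0\})$ equal to $1$ on $\{R_1\le|\xi|\le R_2\}$ and supported in a slightly larger annulus (still bounded away from $0$), so that $f=\tilde\varphi(D/\lambda)f$ by~\eqref{eq:revBerns}. The function $u(t):=e^{t\Delta}f$ has Fourier transform $e^{-t|\xi|^2}\wh f(\xi)$ supported in the same annulus, and may be written $u(t)=K_t^\lambda\star f$ with $K_t^\lambda(x)=\lambda^d K_{t\lambda^2}(\lambda x)$ and $K_s:=\cF^{-1}(e^{-s|\cdot|^2}\tilde\varphi)$. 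Since $\tilde\varphi$ is supported away from the origin, a factoring $e^{-s|\eta|^2}\tilde\varphi(\eta)=e^{-cs}g_s(\eta)$ with $g_s$ Schwartz uniformly in $s\ge 0$ yields $\|K_s\|_{L^1}\le Ce^{-cs}$ for constants $C,c>0$ depending only on $d,R_1,R_2$; by scaling and Young's inequality,
\begin{equation*}
\|u(t)\|_{L^p}\le C\,e^{-c\lambda^2 t}\|f\|_{L^p}\qquad\text{for all }t\ge 0.
\end{equation*}

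Combining this with the $L^p$-contractivity of $e^{t\Delta}$ and iterating the decay on the spectrally-localized iterates $u(ks_0)$ (for an appropriate step $s_0\sim\log(2C)/(c\lambda^2)$), one upgrades the bound to $\|u(t)\|_{L^p}\le e^{-\alpha\lambda^2 t}\|f\|_{L^p}$ for some $\alpha>0$ depending only on $d,R_1,R_2$ and tight at $t=0$. Raising to the $p$-th power and differentiating at $t=0^+$ via $\frac{d}{dt}\|u(t)\|_{L^p}^p=p\int\Delta u\cdot|u|^{p-2}u\,dx$ then yields $p\int\Delta f\cdot|f|^{p-2}f\,dx\le -p\alpha\lambda^2\|f\|_{L^p}^p$, i.e.\ $-\int\Delta f\cdot|f|^{p-2}f\,dx\ge\alpha\lambda^2\|f\|_{L^p}^p$. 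Since $(p-1)/p\le 1$, this is stronger than the claimed inequality, which therefore holds with $c=\alpha$. The delicate step is precisely the upgrade from the kernel bound (with its prefactor $C\ge 1$) to an exponential decay tight at $t=0$: this is the classical adjustment combining the $L^p$-contractivity with repeated application of the decay on spectrally-localized iterates, which costs at most a strictly smaller exponent $\alpha$.
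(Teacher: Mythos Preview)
The paper does not actually prove this proposition; it merely quotes it and cites Lemma~8 of \cite{D1}. Hence there is no in-paper argument to compare against, and the question is simply whether your argument is sound.

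The equality part is fine. The inequality part, however, has a genuine gap at the ``upgrade'' step. From the kernel bound $\|e^{t\Delta}f\|_{L^p}\le C\,e^{-c\lambda^2 t}\|f\|_{L^p}$ with $C>1$, together with $L^p$-contractivity of the heat semigroup and iteration at step $s_0$, the best you can obtain is
\[
\|u(t)\|_{L^p}\le \min\bigl(1,\;C'e^{-\beta\lambda^2 t}\bigr)\|f\|_{L^p}
\]
for some $C'>1$ and $\beta>0$. On the initial interval $[0,\beta^{-1}\lambda^{-2}\log C']$ this reduces to mere contractivity, so differentiating $E(t)=\|u(t)\|_{L^p}^p$ at $t=0^+$ only yields $E'(0)\le 0$, not $E'(0)\le -p\alpha\lambda^2 E(0)$. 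In fact, the statement ``$\|e^{t\Delta}g\|_{L^p}\le e^{-\alpha\lambda^2 t}\|g\|_{L^p}$ for \emph{all} $t\ge 0$ and \emph{all} annulus-localized $g$'' is, by differentiation and by integration of $E'(t)=-p(p-1)\int|\nabla u|^2|u|^{p-2}dx$, exactly equivalent to the inequality you are trying to prove. The iteration argument shows the decay is eventually exponential, but it does not remove the prefactor near $t=0$; the step you call ``the classical adjustment'' is therefore circular as written.

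The proof in \cite{D1} avoids the semigroup altogether. One sets $h=(-\Delta)^{-1}f$, which is well-defined and localized in the same annulus, writes
\[
\|f\|_{L^p}^p=-\int\Delta h\,|f|^{p-2}f\,dx=(p-1)\int|f|^{p-2}\,\nabla h\cdot\nabla f\,dx,
\]
and then bounds the right-hand side by Cauchy--Schwarz with weight $|f|^{p-2}$ together with the \emph{direct} Bernstein inequality $\|\nabla h\|_{L^p}\lesssim\lambda^{-1}\|f\|_{L^p}$. This closes without any circularity (the case $1<p<2$ requires a separate, but elementary, treatment of the weight).
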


 Below are embedding properties which are  used several times:
\begin{lem}\label{lem3.2}
\begin{itemize}
  \item For any $p\in[1,\infty]$ we have the  continuous embedding
$$\dot B^0_{p,1}\hookrightarrow L^p\hookrightarrow \dot B^0_{p,\infty}.$$
\item If  $s\in\R,$ $1\leq p_1\leq p_2\leq\infty$ and $1\leq r_1\leq r_2\leq\infty,$
  then $\dot B^{s}_{p_1,r_1}\hookrightarrow
  \dot B^{s-d(\frac1{p_1}-\frac1{p_2})}_{p_2,r_2}.$
  \item The space  $\dot B^{\frac dp}_{p,1}$ is continuously embedded in   the set  of
bounded  continuous functions (going to $0$ at infinity if    $p<\infty$).
  \end{itemize}
\end{lem}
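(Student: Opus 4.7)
\medbreak
All three items are classical consequences of the dyadic decomposition $\dot\Delta_j f = 2^{jd} h(2^j\cdot)\star f$ with $h=\cF^{-1}\varphi\in\cS(\R^d)$, combined with the Bernstein inequality. I would proceed item by item.

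For the first pair of embeddings, $\dot B^0_{p,1}\hookrightarrow L^p$ follows by applying the triangle inequality in $L^p$ to the Littlewood--Paley representation \eqref{R-E13}, giving
\[
\|f\|_{L^p}\leq\sum_{j\in\Z}\|\dot\Delta_j f\|_{L^p}=\|f\|_{\dot B^0_{p,1}},
\]
where normal convergence of the series in $L^p$ justifies the bound. Conversely, $L^p\hookrightarrow\dot B^0_{p,\infty}$ is a direct consequence of Young's convolution inequality, since the kernel $2^{jd}h(2^j\cdot)$ has $L^1$-norm equal to $\|h\|_{L^1}$ independently of $j$, so that $\|\dot\Delta_j f\|_{L^p}\leq\|h\|_{L^1}\|f\|_{L^p}$ uniformly in $j$.

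For item 2, the engine is the $L^{p_1}\to L^{p_2}$ \emph{Bernstein inequality}: if $g$ has Fourier support in $\{R_1 2^j\leq|\xi|\leq R_2 2^j\}$, then choosing $\wt\varphi\in\cS(\R^d)$ supported in a (slightly larger) annulus with $\wt\varphi\equiv1$ on $\Supp\varphi$ and writing $g=\wt\varphi(2^{-j}D)g$, Young's inequality applied to the resulting convolution with the rescaled Schwartz kernel yields
\[
\|g\|_{L^{p_2}}\lesssim 2^{jd(1/p_1-1/p_2)}\|g\|_{L^{p_1}}\quad\hbox{for }\ 1\leq p_1\leq p_2\leq\infty.
\]
Applying this to $g=\dot\Delta_j f$, multiplying by $2^{j(s-d(1/p_1-1/p_2))}$ and taking the $\ell^{r_2}(\Z)$-norm in $j$, the trivial inclusion $\ell^{r_1}(\Z)\hookrightarrow\ell^{r_2}(\Z)$ (valid since $r_1\leq r_2$) then delivers the claimed inequality $\|f\|_{\dot B^{s-d(1/p_1-1/p_2)}_{p_2,r_2}}\lesssim\|f\|_{\dot B^s_{p_1,r_1}}$.

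For item 3, the same Bernstein inequality with $p_1=p$ and $p_2=\infty$ gives $\|\dot\Delta_j f\|_{L^\infty}\lesssim 2^{jd/p}\|\dot\Delta_j f\|_{L^p}$; summing in $j$ yields $\|f\|_{L^\infty}\lesssim\|f\|_{\dot B^{d/p}_{p,1}}$. Each $\dot\Delta_j f$ is smooth, and the series converges normally in $L^\infty$, hence the sum is continuous. The point I expect to be the most delicate is the vanishing at infinity when $p<\infty$: I would argue that each block $\dot\Delta_j f=2^{jd}h(2^j\cdot)\star f$ lies in $C_0(\R^d)$ by approximating $f\in L^p$ by compactly supported functions (dense in $L^p$ for $p<\infty$), for which the convolution with the Schwartz kernel manifestly belongs to $C_0$, and then passing to the limit uniformly using Young's inequality. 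Since $C_0(\R^d)$ is closed under uniform convergence and the series defining $f$ converges normally in $L^\infty$, the limit $f$ inherits that property. For $p=\infty$ one only retains bounded continuity, as decay at infinity may genuinely fail.
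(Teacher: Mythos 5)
Your proof is correct and is the standard textbook argument (Young's inequality for the uniform bound on the blocks, the $L^{p_1}\to L^{p_2}$ Bernstein inequality plus $\ell^{r_1}\hookrightarrow\ell^{r_2}$ for the second item, and normal convergence in $L^\infty$ together with $\dot\Delta_jf\in C_0$ for the third); the paper itself gives no proof, treating the lemma as classical and deferring to Chapters 2--3 of \cite{BCD}. The only point worth making explicit is that the identity $f=\sum_j\dot\Delta_jf$ used in the first and third items is guaranteed by condition \eqref{R-E14}, which is built into Definition \ref{defn3.1} of the spaces considered here.
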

\medbreak
Let us also mention the following  interpolation inequality
\begin{lem}\label{lem3.3}
Let $1\leq p,r_1,r_2,r\leq\infty,$ $\sigma_1\not=\sigma_2$ and $\theta\in(0,1)$. Then
\begin{equation}\label{R-E19}
  \|f\|_{\dot B^{\theta\sigma_2+(1-\theta)\sigma_1}_{p,r}}\lesssim\|f\|_{\dot B^{\sigma_1}_{p,r_1}}^{1-\theta}
  \|f\|_{\dot B^{\sigma_2}_{p,r_2}}^\theta.
\end{equation}
\end{lem}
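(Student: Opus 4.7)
The plan is to exploit the dyadic Littlewood--Paley characterization of the Besov norm together with the hypothesis $\sigma_1\neq\sigma_2$, which creates a spectral gap allowing one to reduce everything to summing two convergent geometric series separated by a single crossover frequency.

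First, from the trivial embedding $\ell^{r_i}(\Z)\hookrightarrow\ell^\infty(\Z)$ applied to the sequence $(2^{j\sigma_i}\|\dot\Delta_j f\|_{L^p})_{j\in\Z}$, I would extract the two pointwise bounds
$$\|\dot\Delta_j f\|_{L^p}\leq A\,2^{-j\sigma_1}\quad\hbox{and}\quad\|\dot\Delta_j f\|_{L^p}\leq B\,2^{-j\sigma_2},$$
with $A\triangleq\|f\|_{\dot B^{\sigma_1}_{p,r_1}}$ and $B\triangleq\|f\|_{\dot B^{\sigma_2}_{p,r_2}}$. Writing $\sigma\triangleq\theta\sigma_2+(1-\theta)\sigma_1$ and assuming without loss of generality that $\sigma_1<\sigma_2$, these two bounds combine into
$$2^{j\sigma}\|\dot\Delta_j f\|_{L^p}\leq \min\bigl(A\,2^{j\theta(\sigma_2-\sigma_1)},\ B\,2^{-j(1-\theta)(\sigma_2-\sigma_1)}\bigr)\cdotp$$
The two envelopes grow in opposite directions in $j$ and meet at the crossover index $j^*\in\R$ defined by $2^{j^*(\sigma_2-\sigma_1)}=B/A$ (the degenerate cases $A=0$ or $B=0$ force $f=0$, hence are trivial).

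Next, I would take the $\ell^r$ norm in $j$ by splitting at $\lfloor j^*\rfloor$: for $j\leq\lfloor j^*\rfloor$ use the first bound, for $j>\lfloor j^*\rfloor$ use the second. Each of the resulting sums is a geometric series whose ratio is bounded away from $1$ thanks to $\theta(\sigma_2-\sigma_1)>0$ and $(1-\theta)(\sigma_2-\sigma_1)>0$, so it is controlled, up to a constant depending only on $\theta$, $\sigma_2-\sigma_1$ and $r$, by its largest term, namely $A\,2^{j^*\theta(\sigma_2-\sigma_1)}$ in the first case and $B\,2^{-j^*(1-\theta)(\sigma_2-\sigma_1)}$ in the second. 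Substituting $2^{j^*}=(B/A)^{1/(\sigma_2-\sigma_1)}$, both quantities collapse to $A^{1-\theta}B^\theta$, which is the announced right-hand side. The endpoint $r=\infty$ is even easier, since the supremum is attained, up to a factor of $2$, precisely at $j^*$.

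The argument is essentially one-line once the right picture is drawn; the only mild obstacle is ensuring that the constant is independent of $A$ and $B$ (which it is, as it depends only on $\theta(\sigma_2-\sigma_1)$, $(1-\theta)(\sigma_2-\sigma_1)$ and $r$) and handling uniformly the endpoint choices $r,r_1,r_2\in\{1,\infty\}$. It is worth emphasizing that the hypothesis $\sigma_1\neq\sigma_2$ is \emph{essential}: without a spectral gap, the two pointwise bounds coincide on every dyadic block, no trade-off between low and high modes is available, and one falls back on the much weaker $\ell$-H\"older inequality which would then require the compatibility relation $\tfrac{1-\theta}{r_1}+\tfrac{\theta}{r_2}=\tfrac{1}{r}$.
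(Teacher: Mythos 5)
Your proof is correct and is exactly the standard argument for this lemma (the paper states it without proof, deferring to \cite{BCD}, where the same crossover-frequency splitting into two geometric series is used). The computation of the crossover index, the reduction to the largest term of each geometric series, and the handling of the degenerate and endpoint cases are all sound.
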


As already pointed out, the low frequency hypothesis is less restrictive than the usual $L^q.$
This is a consequence of the  following embedding:
\begin{lem}\label{lem3.4}
Suppose that $\sigma>0$ and $1\leq q<2$. One has
\begin{equation}\label{R-E151}
\|f\|_{\dot{B}^{-\sigma}_{r,\infty}}\lesssim \|f\|_{L^{q}}
\quad\hbox{with }\ \frac1q-\frac1r=\frac\sigma d\cdotp
\end{equation}
\end{lem}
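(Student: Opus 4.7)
The plan is to control, uniformly in $j\in\Z$, the dyadic quantity $2^{-j\sigma}\|\ddj f\|_{L^{r}}$ by $\|f\|_{L^{q}}$, which after taking the supremum in $j$ delivers exactly $\|f\|_{\dot B^{-\sigma}_{r,\infty}}\lesssim\|f\|_{L^q}$.

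First I would invoke the classical Bernstein inequality: since $\ddj f$ is Fourier-supported in an annulus of size $2^{j}$ and since $q\leq r$ (which is guaranteed by $\sigma>0$ together with $\frac1q-\frac1r=\frac\sigma d$), one has
$$
\|\ddj f\|_{L^{r}}\lesssim 2^{jd(\frac1q-\frac1r)}\|\ddj f\|_{L^{q}}=2^{j\sigma}\|\ddj f\|_{L^{q}}.
$$
Multiplying by $2^{-j\sigma}$ cancels the dyadic weight completely, reducing the problem to proving $\|\ddj f\|_{L^q}\lesssim \|f\|_{L^q}$ with constant independent of $j$.

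For this last point, since $\ddj f=2^{jd}h(2^{j}\cdot)\star f$ with $h=\cF^{-1}\varphi\in\cS(\R^d)\subset L^1(\R^d)$, Young's convolution inequality yields $\|\ddj f\|_{L^{q}}\leq \|h\|_{L^{1}}\|f\|_{L^{q}}$ uniformly in $j$. Chaining the two estimates and taking the supremum over $j\in\Z$ gives the lemma.

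The only detail needed to make the argument fully rigorous is that any $f\in L^q$ defines a tempered distribution for which the Littlewood--Paley decomposition \eqref{R-E13} makes sense, i.e.\ condition \eqref{R-E14} holds. This is where the assumption $q<2$ (in particular $q<\infty$) intervenes: writing $\dot S_j f=2^{jd}\check\chi(2^j\cdot)\star f$ and applying Young's inequality with $\frac1{q'}=1-\frac1q$ yields $\|\dot S_j f\|_{L^\infty}\lesssim 2^{jd/q'}\|f\|_{L^q}\to 0$ as $j\to-\infty$. Beyond this bookkeeping there is no genuine obstacle, the proof being a one-line Bernstein computation combined with Young's convolution inequality.
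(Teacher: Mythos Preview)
Your proof is correct and is exactly the standard argument; the paper itself states Lemma~\ref{lem3.4} as a known embedding without supplying a proof, so there is nothing to compare against. The chain ``Bernstein to pass from $L^r$ to $L^q$ on each dyadic block, then Young's convolution inequality to bound $\|\ddj f\|_{L^q}$ by $\|f\|_{L^q}$'' is precisely how one proves $L^q\hookrightarrow\dot B^{0}_{q,\infty}\hookrightarrow\dot B^{-\sigma}_{r,\infty}$.

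Two minor remarks. First, in your verification of \eqref{R-E14} the exponent should read $2^{jd/q}$ rather than $2^{jd/q'}$: scaling gives $\|2^{jd}\check\chi(2^j\cdot)\|_{L^{q'}}=2^{jd/q}\|\check\chi\|_{L^{q'}}$. This does not affect the conclusion since $d/q>0$ for every $q\in[1,\infty)$. Second, and relatedly, the hypothesis $q<2$ plays no genuine role in your argument (only $q<\infty$ is used to ensure \eqref{R-E14}); the restriction $q<2$ in the statement simply reflects the range actually needed in the paper, where the target space is $\dot B^{-s_1}_{2,\infty}$.
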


The following product laws and commutator estimates proved in
 e.g. \cite{BCD} and \cite{DX}
will play a fundamental  role in the analysis of  the bilinear
terms of \eqref{R-E3}.
\begin{prop}\label{prop3.1}
Let $\sigma>0$ and $1\leq p,r\leq\infty$. Then $\dot{B}^{\sigma}_{p,r}\cap L^{\infty}$ is an algebra and
\begin{equation}\label{R-E20}
\|fg\|_{\dot{B}^{\sigma}_{p,r}}\lesssim \|f\|_{L^{\infty}}\|g\|_{\dot{B}^{\sigma}_{p,r}}+\|g\|_{L^{\infty}}\|f\|_{\dot{B}^{\sigma}_{p,r}}.
\end{equation}
Let the real numbers $\sigma_{1},$ $\sigma_{2},$ $p_1$  and $p_2$ be such that
$$
\sigma_1+\sigma_2>0,\quad \sigma_1\leq\frac d{p_1},\quad\sigma_2\leq\frac d{p_2},\quad
\sigma_1\geq\sigma_2,\quad\frac1{p_1}+\frac1{p_2}\leq1.
$$
Then we have
\begin{equation}\label{R-E21}\|fg\|_{\dot{B}^{\sigma_{2}}_{q,1}}\lesssim \|f\|_{\dot{B}^{\sigma_{1}}_{p_1,1}}\|g\|_{\dot{B}^{\sigma_{2}}_{p_2,1}}\quad\hbox{with}\quad
\frac1{q}=\frac1{p_1}+\frac1{p_2}-\frac{\sigma_1}d\cdotp\end{equation}
Finally, for exponents $\sigma>0$ and $1\leq p_1,p_2,q\leq\infty$ satisfying
$$
\frac{d}{p_1}+\frac{d}{p_2}-d\leq \sigma \leq\min\biggl(\frac d{p_1},\frac d{p_2}\biggr)\quad\hbox{and}\quad \frac 1q=\frac 1{p_1}+\frac 1{p_2}-\frac\sigma d,
$$
we have
\begin{equation}\label{R-E22}\|fg\|_{\dot{B}^{-\sigma}_{q,\infty}}\lesssim
\|f\|_{\dot{B}^{\sigma}_{p_1,1}}\|g\|_{\dot{B}^{-\sigma}_{p_2,\infty}}.
\end{equation}
\end{prop}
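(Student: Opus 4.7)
The natural approach is via Bony's paraproduct decomposition
\begin{equation*}
fg = T_fg + T_gf + R(f,g),
\end{equation*}
with
\begin{equation*}
T_fg\triangleq\sum_{j\in\Z}\dot S_{j-1}f\,\dot\Delta_jg\quad\hbox{and}\quad R(f,g)\triangleq\sum_{|j-j'|\leq 1}\dot\Delta_jf\,\dot\Delta_{j'}g.
\end{equation*}
The key structural fact is that each dyadic summand of $T_fg$ is Fourier-supported in an annulus of size $2^j$, whereas the summands of $R(f,g)$ are only supported in a ball of size $\lesssim 2^j$. This is what ultimately forces the positivity assumption $\sigma>0$ (or $\sigma_1+\sigma_2>0$) in the handling of the remainder, whereas the paraproducts will be controlled without any sign restriction.

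For \eqref{R-E20}, the annular support of each block of $T_fg$ ensures $\dot\Delta_{j'}(\dot S_{j-1}f\,\dot\Delta_jg)\equiv 0$ unless $|j-j'|\leq N_0$, and H\"older's inequality yields $\|\dot S_{j-1}f\,\dot\Delta_jg\|_{L^p}\leq\|f\|_{L^\infty}\|\dot\Delta_jg\|_{L^p}$. Weighting by $2^{j'\sigma}$ and taking the $\ell^r(\Z)$ norm in $j'$ delivers $\|T_fg\|_{\dot B^\sigma_{p,r}}\lesssim\|f\|_{L^\infty}\|g\|_{\dot B^\sigma_{p,r}}$, the symmetric piece $T_gf$ being identical. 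For the remainder, the ball support forces $\dot\Delta_{j'}R(f,g)$ to vanish when $j<j'-N_1$; interchanging the order of summation gives a geometric factor $\sum_{j'\leq j+N_1}2^{j'\sigma}\lesssim 2^{j\sigma}$, which converges precisely when $\sigma>0$.

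For \eqref{R-E21}, the same decomposition is used, but H\"older is now applied block by block in the form $\|\dot S_{j-1}f\,\dot\Delta_jg\|_{L^q}\lesssim\|\dot S_{j-1}f\|_{L^{q^*}}\|\dot\Delta_jg\|_{L^{p_2}}$ with $1/q^*=1/p_1-\sigma_1/d$. Since $\sigma_1\leq d/p_1$, Bernstein's inequality controls $\|\dot S_{j-1}f\|_{L^{q^*}}$ by $\|f\|_{\dot B^{\sigma_1}_{p_1,1}}$, the borderline $\sigma_1=d/p_1$ being treated via the $L^\infty$ embedding. Summation in $j$ after the weight $2^{j\sigma_2}$ controls $T_fg$, and the symmetric piece exploits $\sigma_1\geq\sigma_2$ to transfer the regularity onto the $f$-factor; the remainder is handled exactly as before, the assumption $\sigma_1+\sigma_2>0$ being precisely what closes the geometric summation.

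Estimate \eqref{R-E22} follows the same scheme, adapted to the $\ell^\infty$ summability inherent in the $\dot B^{-\sigma}_{q,\infty}$ norm. The upper bound $\sigma\leq\min(d/p_1,d/p_2)$ places both paraproducts within the previous framework (now with an output of negative regularity), while the lower bound $\sigma\geq d/p_1+d/p_2-d$ is exactly what is needed to close the geometric summation of the remainder against a negative weight. Throughout, the main technical burden is the careful bookkeeping of the six hypotheses and the identification of which one controls which piece; once this is done, everything reduces to Bernstein's and H\"older's inequalities applied dyadically.
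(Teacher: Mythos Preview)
Your approach via Bony's paraproduct decomposition is correct and is precisely the standard route taken in the references \cite{BCD} and \cite{DX} that the paper cites in lieu of a proof; the paper itself does not reprove this proposition. One small inaccuracy: in \eqref{R-E22} the lower bound $\sigma\geq d/p_1+d/p_2-d$ is not what closes the remainder summation (that is handled by $\sigma>0$ together with the $\ell^1$ summability of $f$), but is simply the condition $q\geq1$ ensuring the target space is well defined; otherwise your identification of roles is accurate.
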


Proposition \ref{prop3.1} is not enough to handle the case $2p>d$ in the proof of Theorem \ref{thm2.1}.
We shall make use of the following estimates proved recently in \cite{DX}.
\begin{prop}\label{prop3.2}  Let $j_0\in\Z,$ and denote $z^\ell\triangleq\dot S_{j_0}z,$  $z^h\triangleq z-z^\ell$ and, for any $s\in\R,$
$$
\|z\|_{\dot B^s_{2,\infty}}^\ell\triangleq\sup_{j\leq j_0}2^{js} \|\ddj z\|_{L^2}.
$$
There exists a universal integer $N_0$ such that  for any $2\leq p\leq 4$ and $\sigma>0,$ we have
\begin{eqnarray}\label{R-E23}
&&\|f g^h\|_{\dot B^{-s_0}_{2,\infty}}^\ell\leq C \bigl(\|f\|_{\dot B^\sigma_{p,1}}+\|\dot S_{j_0+N_0}f\|_{L^{p^*}}\bigr)\|g^h\|_{\dot B^{-\sigma}_{p,\infty}}\\\label{R-E24}
&&\|f^h g\|_{\dot B^{-s_0}_{2,\infty}}^\ell
\leq C \bigl(\|f^h\|_{\dot B^\sigma_{p,1}}+\|\dot S_{j_0+N_0}f^h\|_{L^{p^*}}\bigr)\|g\|_{\dot B^{-\sigma}_{p,\infty}}
\end{eqnarray}
with  $s_0\triangleq \frac{2d}p-\frac d2$ and $\frac1{p^*}\triangleq\frac12-\frac1p,$
and $C$ depending only on $j_0,$ $d$ and $\sigma.$
\end{prop}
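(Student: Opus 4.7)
My approach is to apply Bony's paraproduct decomposition to $fg^h$ (resp.\ $f^h g$), estimate the low-frequency block $\dot \Delta_k$ of each piece for every $k\le j_0$, and then take the supremum weighted by $2^{-ks_0}$. Writing
\[
fg^h=T_f g^h+T_{g^h}f+R(f,g^h),\qquad T_u v=\sum_{j}\dot S_{j-1}u\,\dot \Delta_j v,\qquad R(u,v)=\sum_{j}\dot \Delta_j u\,\bigl(\dot \Delta_{j-1}+\dot \Delta_j+\dot \Delta_{j+1}\bigr)v,
\]
the three pieces behave very differently because $g^h$ is spectrally supported in $\{|\xi|\gtrsim 2^{j_0-1}\}$.

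For the remainder $R(f,g^h)$, only $j\ge j_0-2$ contributes, but the Fourier support of each summand is a ball of size $\sim 2^{j+2}$ rather than an annulus, so $\dot \Delta_k$ picks up every such $j$. Bernstein's inequality combined with H\"older's inequality yields
\[
\|\dot \Delta_k\bigl(\dot \Delta_j f\cdot\dot \Delta_{j'}g^h\bigr)\|_{L^2}\lesssim 2^{kd(\frac 2p-\frac 12)}\|\dot \Delta_j f\|_{L^p}\|\dot \Delta_{j'}g^h\|_{L^p}=2^{ks_0}\|\dot \Delta_j f\|_{L^p}\|\dot \Delta_{j'}g^h\|_{L^p},
\]
the crucial point here being that the hypothesis $p\le 4$ forces $p/2\le 2$, which is what Bernstein's inequality needs. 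Multiplying by $2^{-ks_0}$, inserting $2^{j\sigma}\|\dot \Delta_j f\|_{L^p}\le\|f\|_{\dot B^\sigma_{p,1}}$ and $2^{-j\sigma}\|\dot \Delta_{j'}g^h\|_{L^p}\lesssim\|g^h\|_{\dot B^{-\sigma}_{p,\infty}}$, and summing in $j\ge j_0-2$ gives the bound $\|f\|_{\dot B^\sigma_{p,1}}\|g^h\|_{\dot B^{-\sigma}_{p,\infty}}$ uniformly in $k\le j_0$.

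The paraproducts are easier because each summand is spectrally supported in an annulus of size $2^j$, and the constraint $j\gtrsim j_0$ forces the surviving values of $k$ to lie in a bounded window around $j_0$; powers of $2^{j_0}$ can then be absorbed into a constant depending only on $j_0$, $d$, $\sigma$. For $T_{g^h}f$ the low-frequency factor $\dot S_{j-1}g^h$ is controlled by $\|\dot S_{j-1}g^h\|_{L^p}\lesssim 2^{j\sigma}\|g^h\|_{\dot B^{-\sigma}_{p,\infty}}$ (a geometric series that converges because $\sigma>0$), which combined with $\|\dot \Delta_j f\|_{L^p}\le 2^{-j\sigma}\|f\|_{\dot B^\sigma_{p,1}}$ again produces $\|f\|_{\dot B^\sigma_{p,1}}\|g^h\|_{\dot B^{-\sigma}_{p,\infty}}$. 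For $T_f g^h$, however, $\dot S_{j-1}f$ has arbitrary low-frequency content and cannot be controlled by $\|f\|_{\dot B^\sigma_{p,1}}$ when $\sigma>0$: I would instead choose $N_0$ large enough that $\dot S_{j-1}f=\dot S_{j-1}\bigl(\dot S_{j_0+N_0}f\bigr)$ for every $j$ in the relevant finite window, and then apply H\"older's inequality with $\frac{1}{p^*}+\frac{1}{p}=\frac 12$ to produce $\|\dot S_{j_0+N_0}f\|_{L^{p^*}}\|\dot \Delta_j g^h\|_{L^p}$. This is exactly where the second summand on the right-hand side of \eqref{R-E23} comes from; assembling the three pieces finishes that inequality.

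For \eqref{R-E24} the analogous decomposition $f^h g=T_{f^h}g+T_g f^h+R(f^h,g)$ is used. The remainder and $T_{f^h}g$ are treated as above (with the $\dot B^\sigma_{p,1}$ and $\dot B^{-\sigma}_{p,\infty}$ norms now carried by $f^h$ and $g$ respectively); the genuinely new piece is $T_g f^h$, whose low-frequency factor $\dot S_{j-1}g$ has arbitrary low-frequency content and is handled by the very same $L^{p^*}$--$L^p$ H\"older splitting as for $T_f g^h$, which is where $\|\dot S_{j_0+N_0}f^h\|_{L^{p^*}}$ appears. The core obstacle throughout is conceptual rather than computational: $\dot B^\sigma_{p,1}$ with $\sigma>0$ simply does not control the $L^p$ norm of low-frequency cutoffs, which is why the unusual $L^{p^*}$ norm of a low-frequency truncation is forced onto the right-hand side. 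A secondary bookkeeping issue is to fix $N_0$ so that the factorization $\dot S_{j-1}=\dot S_{j-1}\circ\dot S_{j_0+N_0}$ holds uniformly across the $O(1)$-sized window of $j$ that survives each paraproduct, so that all residual powers of $2^{j_0}$ absorb into $C(j_0,d,\sigma)$.
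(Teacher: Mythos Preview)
The paper does not supply its own proof of this proposition; it simply cites \cite{DX}. Your approach via Bony's decomposition is the natural one and is correct for \eqref{R-E23}: the key observations---that the paraproducts collapse to a finite window of $j$ near $j_0$ because $g^h$ is spectrally bounded below, that Bernstein $L^{p/2}\to L^2$ requires $p\le 4$, and that $\dot S_{j-1}f$ in $T_f g^h$ cannot be controlled by $\|f\|_{\dot B^\sigma_{p,1}}$ with $\sigma>0$ and must instead be placed in $L^{p^*}$---are all on target.

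There is, however, a mix-up in your treatment of \eqref{R-E24}. You name $T_g f^h$ as the piece requiring the $L^{p^*}$ trick on the grounds that $\dot S_{j-1}g$ has arbitrary low-frequency content. But the norm carried by $g$ is $\dot B^{-\sigma}_{p,\infty}$ with a \emph{negative} index, so
\[
\|\dot S_{j-1}g\|_{L^p}\le\sum_{j'\le j-2}2^{j'\sigma}\,\|g\|_{\dot B^{-\sigma}_{p,\infty}}\lesssim 2^{j\sigma}\|g\|_{\dot B^{-\sigma}_{p,\infty}}
\]
converges exactly as for $\dot S_{j-1}g^h$ in \eqref{R-E23}; this piece is handled by Besov norms alone. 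Moreover, applying the $L^{p^*}\times L^p\to L^2$ H\"older splitting to $\dot S_{j-1}g\cdot\dot\Delta_j f^h$ would produce $\|\dot S_{j-1}g\|_{L^{p^*}}$, not a norm of $f^h$, so your explanation of where $\|\dot S_{j_0+N_0}f^h\|_{L^{p^*}}$ comes from cannot be right as written. The piece to which the $L^{p^*}$ argument actually applies is $T_{f^h}g=\sum_j\dot S_{j-1}f^h\cdot\dot\Delta_j g$: there the low-frequency factor is $\dot S_{j-1}f^h$, and since $j$ lies in a bounded window near $j_0$ one has $\dot S_{j-1}f^h=\dot S_{j-1}\bigl(\dot S_{j_0+N_0}f^h\bigr)$ for $N_0$ large, whence $\|\dot S_{j-1}f^h\|_{L^{p^*}}\lesssim\|\dot S_{j_0+N_0}f^h\|_{L^{p^*}}$. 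With the roles of $T_{f^h}g$ and $T_g f^h$ swapped in your last paragraph, the argument goes through.
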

The terms in System \eqref{R-E9}  corresponding to the functions
$K_1, K_2, K_3, \tilde{K}_1$ and $\tilde{K}_2$ will be
  bounded thanks to the following classical result:
\begin{prop}\label{prop:compo}
Let $F:\R\rightarrow\R$ be  smooth
with $F(0)=0.$
For  all  $1\leq p,r\leq\infty$ and  $\sigma>0$ we have
$F(f)\in \dot B^\sigma_{p,r}\cap L^\infty$  for  $f\in \dot B^\sigma_{p,r}\cap L^\infty,$  and
\begin{eqnarray}\label{R-E25}
\|F(f)\|_{\dot B^\sigma_{p,r}}\leq C\|f\|_{\dot B^\sigma_{p,r}}
\end{eqnarray}
with $C$ depending only on $\|f\|_{L^\infty},$ $F'$ (and higher derivatives),  $\sigma,$ $p$ and $d.$
\end{prop}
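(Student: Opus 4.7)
The plan is to use the classical telescoping series decomposition going back to Bony and Meyer. Since $f\in\dot B^\sigma_{p,r}$ with $\sigma>0,$ condition \eqref{R-E14} ensures $\|\dot S_q f\|_{L^\infty}\to 0$ as $q\to-\infty,$ so that $F(\dot S_q f)\to F(0)=0$ in $L^\infty;$ standard arguments also yield $F(\dot S_q f)\to F(f)$ in $\cS'$ as $q\to+\infty.$ I would thus decompose
$$F(f)=\sum_{q\in\Z} G_q\quad\hbox{with}\quad G_q\triangleq F(\dot S_{q+1}f)-F(\dot S_q f)=m_q\,\dot\Delta_q f,$$
where the fundamental theorem of calculus along the segment joining $\dot S_q f$ to $\dot S_{q+1}f$ provides
$m_q\triangleq\int_0^1 F'\bigl(\dot S_q f+\tau\dot\Delta_q f\bigr)\,d\tau.$
Since $\dot S_q f+\tau\dot\Delta_q f$ takes values in $[-C\|f\|_{L^\infty},C\|f\|_{L^\infty}]$ and $F'$ is smooth, one gets the uniform bound $\|m_q\|_{L^\infty}\leq C_0=C_0(\|f\|_{L^\infty},F').$

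Next, I would establish that for every integer $k\geq0,$
$$\|\nabla^k G_q\|_{L^p}\leq C_k\,2^{qk}\,\|\dot\Delta_q f\|_{L^p},$$
with $C_k$ depending only on $d,$ $k,$ $\|f\|_{L^\infty}$ and the derivatives of $F$ up to order $k+1.$ Setting $g_\tau\triangleq\dot S_q f+\tau\dot\Delta_q f,$ Bernstein's inequality gives $\|\nabla^l g_\tau\|_{L^\infty}\lesssim 2^{ql}\|f\|_{L^\infty}$ for $l\geq1.$ Applying the Fa\`a di Bruno formula to differentiate the composition $F'\circ g_\tau$ and using the uniform $L^\infty$ control on $g_\tau,$ one obtains $\|\nabla^l m_q\|_{L^\infty}\leq C_l\,2^{ql}.$ The desired derivative bound on $G_q=m_q\,\dot\Delta_q f$ then follows from the Leibniz rule combined with $\|\nabla^{k-l}\dot\Delta_q f\|_{L^p}\lesssim 2^{q(k-l)}\|\dot\Delta_q f\|_{L^p}.$

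To conclude, I apply $\dot\Delta_j$ to the telescoping series and split the sum at $q=j.$ For $q\geq j,$ I use the direct bound $\|\dot\Delta_j G_q\|_{L^p}\leq C\|m_q\|_{L^\infty}\|\dot\Delta_q f\|_{L^p}\leq CC_0\,\|\dot\Delta_q f\|_{L^p}.$ For $q<j,$ I apply the reverse Bernstein inequality $\|\dot\Delta_j u\|_{L^p}\lesssim 2^{-jk}\|\nabla^k u\|_{L^p}$ to $u=G_q$ with an integer $k>\sigma,$ which yields $\|\dot\Delta_j G_q\|_{L^p}\lesssim C_k\,2^{(q-j)k}\|\dot\Delta_q f\|_{L^p}.$ After multiplication by $2^{j\sigma},$ both sums become discrete convolutions in $q-j$ against $\ell^1(\Z)$ kernels---namely $2^{-(q-j)\sigma}$ on $q\geq j$ (thanks to $\sigma>0$) and $2^{-(j-q)(k-\sigma)}$ on $q<j$ (thanks to $k>\sigma$). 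Young's convolution inequality in $\ell^r(\Z)$ then delivers $\|F(f)\|_{\dot B^\sigma_{p,r}}\lesssim\|f\|_{\dot B^\sigma_{p,r}}$ with the claimed dependence of constants, while the $L^\infty$ bound on $F(f)$ is immediate from $F(0)=0$ and the mean value theorem. The main technical obstacle is the careful bookkeeping in the Fa\`a di Bruno step: one must track how the constants depend on $\|f\|_{L^\infty}$ and on successive derivatives of $F$ evaluated on the range of $g_\tau,$ which is precisely where the smoothness assumption on $F$ enters substantively.
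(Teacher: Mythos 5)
The paper offers no proof of Proposition \ref{prop:compo}: it is invoked as a classical result (it is Meyer's composition theorem, see e.g.\ \cite{BCD}). Your argument is exactly that classical proof---the first-linearization telescoping series $F(f)=\sum_q\bigl(F(\dot S_{q+1}f)-F(\dot S_qf)\bigr)$ justified by \eqref{R-E14} and $F(0)=0$, the factorization $G_q=m_q\,\dot\Delta_qf$ with $m_q$ bounded on the range of $f$, the derivative estimates $\|\nabla^kG_q\|_{L^p}\lesssim 2^{qk}\|\dot\Delta_qf\|_{L^p}$ from Fa\`a di Bruno, Leibniz and Bernstein, and the low/high split of $\sum_q\dot\Delta_jG_q$ closed by Young's inequality for discrete convolutions using $\sigma>0$ and $k>\sigma$---and it is correct, with the constant depending on $\|f\|_{L^\infty}$ and on $F',\dots,F^{(k+1)}$ as claimed.
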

\begin{rem}  As  Theorem \ref{thm1.1} involves $\tilde L_T^\varrho(\dot B^s_{p,r})$ semi-norms,
we shall very often use Propositions \ref{prop3.1}, \ref{prop3.2} and \ref{prop:compo} adapted to those spaces.
The general rule is that exactly the same estimates hold true, once the time Lebesgue exponents
have been treated according to H\"older inequality. \end{rem}


\section{The proof of decay estimates} \setcounter{equation}{0}\label{sec:4}

In this section, we prove Theorem \ref{thm2.1}.
We start with the global solution $(a,\upsilon,\theta)$ given by
 Theorem \ref{thm1.1}, that satisfies \eqref{R-E5} and thus in particular
 \begin{equation}\label{eq:smalla}
 \|a\|_{\wt L^\infty(\dot B^{\frac dp}_{p,1})}\ll1.
 \end{equation}
 Then there only remains to prove  \eqref{R-E11}.
 The overall strategy is to combine  frequency-localization of the linear part of the full system
 with the Duhamel principle to handle the nonlinear terms.
   \medbreak
    We shall proceed in three steps.
      Step 1 is dedicated to the proof of decay estimates for the low  frequency part of
 $(a,\upsilon,\theta),$  that is
 $$\cD_{p,1}(t)\triangleq  \sup_{s\in[\varepsilon-s_{1},\frac d2+1]}\|\langle\tau\rangle^{\frac {s_1+s}2}(a,\upsilon,\theta)\|_{L^\infty_t(\dot B^s_{2,1})}^\ell.$$
 To this end,  we shall  exhibit the low frequency decay exponents
for  the linear system corresponding
 to the left-hand side of  \eqref{R-E9}. This  will  be achieved by means of  a simple energy method
 on the spectrally localized system, \emph{without  computing
 the Green function of the linear system}.
 Denoting by $(E(t))_{t\geq0}$ the corresponding semi-group, we shall get
\begin{equation}\label{R-E49}
\sup_{t\geq0}\,\langle t\rangle^{\frac {s_1+s}2}\|E(t)U_{0}\|_{\dot B^s_{2,1}}^\ell\lesssim \|U_{0}\|_{\dot{B}^{-s_{1}}_{2,\infty}}^\ell\quad\hbox{if }\ s>-s_1.
\end{equation}
Then taking advantage of Duhamel formula reduces the proof to that of suitable
decay estimates in the space $\dot B^{-s_1}_{2,\infty}$ for    all the nonlinear terms in $f,$ $g$ and $h.$
  Using the embedding $L^r\hookrightarrow \dot B^{-s_1}_{2,\infty}$  with $\frac1r=\frac{s_1}d+\frac12$
  and   the obvious product law $L^{2r}\times L^{2r}\to L^{r},$ it turns out to be
  (almost) enough to exhibit appropriate bounds for the solution in $L^{2r}.$
  In fact, that strategy  fails only  if $p>\frac d2,$ owing to the term $I(a)\Delta\theta^h$
 which, according to Theorem \ref{thm1.1}, is only  in $\dot B^{\frac dp-2}_{p,1}$ for a.a. $t\geq0.$
 Clearly, if  $\frac dp-2<0$ then  one cannot expect that term to be in any Lebesgue space.
 To overcome that difficulty, we will have to use the more elaborate product
 laws in Besov spaces stated in Proposition \ref{prop3.2}.
\smallbreak
 Step 2 is devoted to bounding\footnote{Recall that  $\alpha\triangleq  s_1+\frac d2+\frac12-\varepsilon.$}
 $$\cD_{p,2}(t)\triangleq\|\langle\tau\rangle^{\alpha}a\|_{\wt L^\infty_t(\dot B^{\frac dp}_{p,1})}^h+\|\langle\tau\rangle^{\alpha}\upsilon\|_{\wt L^\infty_t(\dot B^{\frac dp-1}_{p,1})}^h+\|\langle\tau\rangle^{\alpha}\theta\|_{\wt L^\infty_t(\dot B^{\frac dp-2}_{p,1})}^h.
 $$
 To this end, following  Haspot's approach in   \cite{H2},
 we introduce the \emph{effective velocity}
  \begin{equation}\label{eq:w} w\triangleq\nabla(-\Delta)^{-1}(a-\mathrm{div}\,\upsilon).\end{equation}
 Then, up to low order terms,  $w,$ $\theta$   and the divergence free part  of the velocity
 satisfy a heat equation, while  $a$ fulfills  a \emph{damped} transport equation.
  That way of rewriting the full system allows to  avoid the loss of one derivative arising from the convection term
in the density equation. Basically, this is  the same strategy as for   the barotropic Navier-Stokes equations (see \cite{DX}).
In the polytropic case however,  one has to  perform  low and high frequency decompositions
of the  (new) nonlinear terms involving $\theta,$ for  $\theta^h$  is  less regular than  $\upsilon^h$ by one derivative.
\smallbreak
 In the last step,  we establish a gain of regularity and decay altogether for the high frequencies of $\upsilon$ and $\theta$, namely we bound
 $$
 \cD_{p,3}(t)\triangleq \|\tau^\alpha(\nabla \upsilon,\theta)\|_{\wt L^\infty_t(\dot B^{\frac dp}_{p,1})}^h.
 $$
 This  strongly relies on the
maximal regularity estimates for the heat and Lam\'e semi-groups (see Proposition \ref{prop3.6}).
Compared to our previous paper \cite{DX},   we  found a way to improve substantially the decay rate exhibited in $\cD_{p,3}:$
  we  get  $t^{-\alpha}$ instead of just $t^{-1}$.

\subsubsection*{Step 1: Bounds for the low frequencies}

Let  $(E(t))_{t\geq0}$ be the semi-group associated to the left-hand side of \eqref{R-E9}. The standard Duhamel principle gives
\begin{equation}\label{R-E30}
\left(\begin{array}{c}a(t)\\ \upsilon (t)\\ \theta (t)\end{array}\right)
={E(t)}\left(\begin{array}{c}a_{0}\\ \upsilon_{0}\\ \theta_{0} \end{array}\right)
\!+\!\int_0^t E(t-\tau)\left(\begin{array}{c}f(\tau)\\g(\tau)\\ k(\tau)\end{array}\right)d\tau.
\end{equation}

The following lemma states  that the low frequency part of
 $(a_{L}, \upsilon_{L}, \theta_{L})\triangleq E(t)(a_{0}\,\upsilon_{0}, \theta_{0})$
 behaves essentially  like  the solution to the heat equation.
 \begin{lem}\label{lem4.1}
Let $(a_{L}, \upsilon_{L}, \theta_{L})$ be the solution to the following system
\begin{equation}\label{R-E31}
\left\{\begin{array}{l}
\partial_{t} a_{L}+\div \upsilon_{L}=0,\\
\partial_{t} \upsilon_{L}-\widetilde\cA\upsilon_{L}+\nabla a_{L}+\gamma\nabla \theta_{L}=0,\\
\partial_{t} \theta_{L}-\beta\Delta \theta_{L}+ \gamma\div \upsilon_{L}=0,
\end{array}\right.
\end{equation}
with the initial data
\begin{equation}\label{R-E32}
(a_{L},\upsilon_{L},\theta_{L})|_{t=0}=(a_{0},\upsilon_{0},\theta_{0}).
\end{equation}
Then, for any $j_{0}\in \Z$, there exists a  positive constant $c_{0}=c_0(\lambda,\mu,\beta,\gamma,j_0)$
such that
\begin{eqnarray}\label{R-E33}
\|(a_{L,j},\upsilon_{L,j},\theta_{L,j})(t)\|_{L^2}\lesssim e^{-c_02^{2j}t}\|(a_{0,j},\upsilon_{0,j},\theta_{0,j})\|_{L^2}
\end{eqnarray}
for $t\geq0$ and $j\leq j_{0}$, where we set $z_j=\dot{\Delta}_jz$ for any $z\in \mathcal{S}'(\mathbb{R}^{d})$.
\end{lem}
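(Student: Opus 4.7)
\medbreak\noindent
\textbf{Proof plan for Lemma \ref{lem4.1}.} The idea is to apply $\ddj$ to \eqref{R-E31} and construct, for each $j\leq j_0$, a Lyapunov functional $\cL_j^2$ equivalent to $\|(a_{L,j},\upsilon_{L,j},\theta_{L,j})\|_{L^2}^2$ and dissipating at rate $2^{2j}$. I would not compute the Green function or diagonalize the symbol: all the estimates follow from block-localized energy identities, in the spirit of the partially parabolic systems studied in \cite{D2,D8,DX}. The natural starting point is the basic energy identity obtained by testing the $\ddj$-localized equations against $(a_{L,j},\upsilon_{L,j},\theta_{L,j})$: because $\int\!\nabla a_{L,j}\cdot \upsilon_{L,j}\,dx+\int(\div\upsilon_{L,j})a_{L,j}\,dx=0$ and the $\gamma$-coupling between $\upsilon_L$ and $\theta_L$ integrates to zero as well, one obtains
\begin{equation*}
\tfrac12\tfrac{d}{dt}\|(a_{L,j},\upsilon_{L,j},\theta_{L,j})\|_{L^2}^2 + \tfrac\mu\nu\|\nabla \upsilon_{L,j}\|_{L^2}^2 + \tfrac{\lambda+\mu}\nu\|\div \upsilon_{L,j}\|_{L^2}^2 + \beta\|\nabla\theta_{L,j}\|_{L^2}^2 = 0,
\end{equation*}
which yields parabolic dissipation for $\upsilon_L$ and $\theta_L$ but nothing for $a_L$.

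The missing dissipation for $a_L$ is manufactured from the cross product $\int \nabla a_{L,j}\cdot\upsilon_{L,j}\,dx$: substituting $\partial_t a_{L,j}=-\div\upsilon_{L,j}$ and $\partial_t\upsilon_{L,j}=\widetilde\cA\upsilon_{L,j}-\nabla a_{L,j}-\gamma\nabla\theta_{L,j}$ gives
\begin{equation*}
\tfrac{d}{dt}\!\int\!\nabla a_{L,j}\cdot\upsilon_{L,j}\,dx + \|\nabla a_{L,j}\|_{L^2}^2 = \|\div \upsilon_{L,j}\|_{L^2}^2 + \!\int\!\nabla a_{L,j}\cdot \widetilde\cA\upsilon_{L,j}\,dx - \gamma\!\int\!\nabla a_{L,j}\cdot\nabla\theta_{L,j}\,dx.
\end{equation*}
By Young's inequality the last two terms on the right are bounded by $\tfrac12\|\nabla a_{L,j}\|_{L^2}^2 + C\bigl(\|\widetilde\cA\upsilon_{L,j}\|_{L^2}^2+\|\nabla\theta_{L,j}\|_{L^2}^2\bigr)$, and Bernstein (valid precisely because $j\leq j_0$) upgrades the first summand to $C(j_0)\|\nabla\upsilon_{L,j}\|_{L^2}^2$. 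Setting $\cL_j^2\triangleq \|(a_{L,j},\upsilon_{L,j},\theta_{L,j})\|_{L^2}^2 + 2\eta\int \nabla a_{L,j}\cdot\upsilon_{L,j}\,dx$ and choosing $\eta=\eta(\lambda,\mu,\beta,\gamma,j_0)$ sufficiently small, the two identities combine into
\begin{equation*}
\tfrac{d}{dt}\cL_j^2 + c\bigl(\|\nabla a_{L,j}\|_{L^2}^2 + \|\nabla \upsilon_{L,j}\|_{L^2}^2 + \|\nabla\theta_{L,j}\|_{L^2}^2\bigr) \leq 0.
\end{equation*}

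A final invocation of Bernstein at frequency $2^j$ (again restricted to $j\leq j_0$) bounds the dissipation term below by $c_0 2^{2j}\cL_j^2$, and Gronwall then delivers $\cL_j(t)\lesssim e^{-c_0 2^{2j}t}\cL_j(0)$, which is \eqref{R-E33}. The delicate point is the tuning of $\eta$: it must simultaneously be small enough that $\cL_j^2$ stays equivalent to $\|(a_{L,j},\upsilon_{L,j},\theta_{L,j})\|_{L^2}^2$ uniformly in $j\leq j_0$ and that the perturbations $\widetilde\cA \upsilon_{L,j}$ and $\gamma\nabla\theta_{L,j}$ in the cross-term identity are absorbed by the parabolic dissipation of $\upsilon_L$ and $\theta_L$. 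The temperature coupling creates no essential new obstacle compared to the barotropic case of \cite{DX}, since the $\gamma$-terms are skew in the top-order identity and only perturbative in the cross identity for $a_L$; this is what allows the dependency of $c_0$ on the physical parameters to be controlled.
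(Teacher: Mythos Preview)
Your argument is correct and rests on the same core idea as the paper's proof: a perturbed energy (Lyapunov) functional built from the basic $L^2$ energy plus a cross term between $a$ and $\upsilon$ that manufactures the missing dissipation for the hyperbolic component. The execution differs in a few cosmetic respects. The paper works pointwise on the Fourier side after splitting $\upsilon_L$ into its incompressible part $\cP\upsilon_L$ (handled separately by the heat equation) and its potential part $\omega_L=|D|^{-1}\div\upsilon_L$; the Lyapunov functional there also carries an extra $|\varrho A|^2$ term, which lets the authors pick a specific $K=\wt\beta/(1+\gamma^2)$ independent of the frequency cutoff, with the $j_0$-dependence entering only at the very last step. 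You instead stay in physical space on the full velocity and rely on Bernstein (hence the $j_0$-dependence of $\eta$) both to control the second-order perturbation $\wt\cA\upsilon_{L,j}$ in the cross identity and to certify the equivalence $\cL_j^2\approx\|(a_{L,j},\upsilon_{L,j},\theta_{L,j})\|_{L^2}^2$. Your route avoids the compressible/incompressible decomposition and is slightly more direct; the paper's Fourier computation yields somewhat more explicit constants. Either way the conclusion \eqref{R-E33} follows.
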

\begin{proof}
Set $\omega_{L}\triangleq|D|^{-1}\div\upsilon_{L}$ with $\mathcal{F}(|D|^{s}f)(\xi)\triangleq |\xi|^s\hat{f}(\xi)(s\in \R)$.
Remembering that $\cA=\nu^{-1}(\mu\Delta+(\lambda+\mu)\nabla\div)$ and that $\nu=\lambda+2\mu,$ we get
\begin{equation}\label{R-E34}
\left\{\begin{array}{l}
\partial_{t} a_{L}+|D|\omega_{L}=0,\\
\partial_{t} \omega_{L}-\Delta \omega_{L}-|D|a_{L}-\gamma|D|\theta_{L}=0,\\
\partial_{t} \theta_{L}-\beta\Delta \theta_{L}+\gamma|D|\omega_{L}=0.
\end{array}\right.
\end{equation}
 Let  $(A,\Omega,\Theta)$  be the
 Fourier transform of  $(a_{L}, \omega_{L}, \theta_{L})$. Then,  we have
 for all $\varrho\triangleq|\xi|,$
 \begin{equation}\label{R-E34b}
\left\{\begin{array}{l}
\partial_{t} A+\varrho\Omega=0,\\
\partial_{t} \Omega+\rho^2\Omega-\rho A-\gamma\rho\Theta=0,\\
\partial_{t} \Theta+\beta\rho^2\Theta+\gamma\rho\Omega=0,
\end{array}\right.
\end{equation}
 from which we  get the following three identities:
\begin{eqnarray}\label{R-E35}
\frac{1}{2}\frac{d}{dt}|A|^2+\varrho\mathrm{Re}(A\bar{\Omega})=0,\\
\label{R-E36}
\frac{1}{2}\frac{d}{dt}|\Omega|^2+\varrho^2|\Omega|^2-\varrho\mathrm{Re}(A\bar{\Omega})
-\gamma\varrho\mathrm{Re}(\Theta\bar{\Omega})=0,\\\label{R-E37}
\frac{1}{2}\frac{d}{dt}|\Theta|^2+\beta\varrho^2|\Theta|^2+\gamma\varrho\mathrm{Re}(\Theta\bar{\Omega})=0,
\end{eqnarray}
where $\bar{f}$ indicates the complex conjugate of a function $f$.
\medbreak
By adding up \eqref{R-E35}, \eqref{R-E36} and  \eqref{R-E37}, we obtain
\begin{equation}\label{R-E38}
\frac{1}{2}\frac{d}{dt}|(A,\Omega,\Theta)|^2+\wt\beta\varrho^2|(\Omega,\Theta)|^2\leq0\quad\hbox{with }\ \wt\beta\triangleq\min(1,\beta).
\end{equation}
Next, to track  the dissipation of $A$,  we use the fact that
\begin{equation}\label{R-E39}
\frac{d}{dt}[-\mathrm{Re}(A\bar{\Omega})]+\varrho|A|^2-\varrho|\Omega|^2-\varrho^2\mathrm{Re}(A\bar{\Omega})+\gamma\varrho\mathrm{Re}(\Theta\bar{A})=0.
\end{equation}
Combining with \eqref{R-E35},  we deduce that
\begin{equation}\label{R-E40}
\frac{1}{2}\frac{d}{dt}[|\varrho A|^2-2\varrho\mathrm{Re}(A\bar{\Omega})]+\varrho^2|A|^2-\varrho^2|\Omega|^2
+\gamma\varrho^2\mathrm{Re}(\Theta\bar{A})=0.
\end{equation}
Therefore, introducing the ``Lyapunov functional''
$$\mathcal{L}^2_{\varrho}(t)\triangleq |(A,\Omega,\Theta)|^2+K\bigl(|\varrho A|^2-2\varrho\mathrm{Re}(A\bar{\Omega})\bigr)$$
for some $K>0$ (to be chosen hereafter), we get from \eqref{R-E38} and \eqref{R-E40} that
$$
\frac{1}{2}\frac{d}{dt}\mathcal{L}^2_{\varrho}+
\varrho^2(K|A|^2+(\wt\beta-K)|\Omega|^2+\wt\beta|\Theta|^2)
+K\gamma\varrho^2\mathrm{Re}(\Theta\bar{A})\leq0.
$$
Then, taking advantage  of the following  Young inequality
$$
\bigl|\gamma\mathrm{Re}(\Theta\bar{A})\bigr|\leq\frac12|A|^2+\frac{\gamma^2}2|\Theta|^2
$$
then choosing $K$ so that $K\gamma^2=\wt\beta-K,$
we end up with
\begin{equation}\label{R-E42}
\frac{d}{dt}\mathcal{L}^2_{\varrho}+
\wt\beta\varrho^2\biggl(\frac2{1+\gamma^2}|A|^2 +\frac{\gamma^2}{1+\gamma^2}|\Omega^2| +2|\Theta|^2)\biggr)\leq0.
\end{equation}
Using again Young's inequality, we discover that there exists some constant $C_{0}>0$ depending
only on $\beta$ and $\gamma$ so  so that
\begin{equation}\label{R-E43}
C_{0}^{-1}\mathcal{L}^2_{\varrho}\leq |(A,\varrho A,\Omega,\Theta)|^2\leq C_{0}\mathcal{L}^2_{\varrho}.
\end{equation}
This in particular implies that for all fixed $\rho_0>0$ we have for some constant $c_0$
depending only on $\rho_0,$ $\beta$ and $\gamma,$
$$
\wt\beta\biggl(\frac2{1+\gamma^2}|A|^2 +\frac{\gamma^2}{1+\gamma^2}|\Omega^2| +2|\Theta|^2\biggr)\geq c_0\cL_\rho^2\quad\hbox{for all }\  0\leq\varrho\leq\varrho_0.
$$
Therefore, reverting to \eqref{R-E42},
$$
\cL_\rho^2(t)\leq e^{-c_0\varrho^2t}\cL_\rho^2(0),$$
whence using \eqref{R-E43},
\begin{equation}\label{R-E46}
|(A,\Omega,\Theta)|\leq C e^{-\frac{c_{0}}{2}t\varrho^2}|(A,\Omega,\Theta)(0)|
\quad\hbox{for all }\ t\geq0\ \hbox{ and }\ 0\leq\varrho\leq\varrho_0.
\end{equation}
Multiplying both sides by $\varphi(2^{-j}\xi)$ with $|\xi|=\varrho,$  then taking the $L^2$
norm and using
Fourier-Plancherel theorem,  we end up for all $j\leq j_0$ with
\begin{equation}\label{R-E48}
\|(a_{L,j},\omega_{L,j},\theta_{L,j})(t)\|_{L^2}\lesssim e^{-\frac{c_{0}}22^{2j}t}\|(a_{L,j},\omega_{L,j},\theta_{L,j})(0)\|_{L^2}.
\end{equation}
Now, as  the divergence free part $\cP u_L$ of $u_L$ satisfies the heat equation
$$
\d_t\cP u_L-\wt\mu\Delta u_L=0,
$$
we have  for all $j\in\Z,$
\begin{equation}\label{R-E48b}
\|\cP u_{L,j}(t)\|_{L^2}\leq e^{-{\wt\mu}2^{2j}t}\|\cP u_{L,j}(0)\|_{L^2}\quad\hbox{with }\ \wt\mu:=\mu/\nu.
\end{equation}
Then putting \eqref{R-E48} and \eqref{R-E48b} together completes the proof of the lemma.
\end{proof}
\medbreak
Set $U\triangleq(a, \upsilon, \theta)$ and $U_{0}\triangleq(a_{0}, \upsilon_{0}, \theta_{0})$.
{}From  Lemma \ref{lem4.1} and the obvious inequality
$$
\sup_{t\geq0} \sum_{j\in\Z} t^{\frac{s_1+s}2} 2^{j(s_1+s)} e^{-c_02^{2j}t}\leq C_s<+\infty\quad\hbox{if }\ s+s_1>0,
$$
we  get  \eqref{R-E49}  (see \cite{DX} for more  details). Hence we have
\begin{eqnarray}\label{R-E50}
&& \biggl\|\int_0^tE(t-\tau)(f,g,k)(\tau)\,d\tau\biggr\|^\ell_{\dot B^s_{2,1}}
\lesssim\int_0^t\langle t-\tau\rangle^{-\frac{s_1+s}2}\|(f,g,k)(\tau)\|_{\dot B^{-s_1}_{2,\infty}}^\ell\,d\tau.
\end{eqnarray}
We claim that if $p$ and $s_1$ fulfill \eqref{eq:p} and \eqref{eq:s1}, respectively,  then we have for all $t\geq0,$
\begin{equation}\label{R-E51}
\int_0^t\langle t-\tau\rangle^{-\frac {s_1+s}2} \|(f, g, k)(\tau)\|_{\dot B^{-s_1}_{2,\infty}}^\ell\,d\tau\lesssim\langle t\rangle^{-\frac {s_1+s}2}
\bigl(\cD^2_{p}(t)+\cX^2_{p}(t)\bigr).
\end{equation}
In order to prove our claim, we shall use repeatedly   the following
obvious  inequality that is satisfied whenever $0\leq\sigma_1\leq\sigma_2$ and $\sigma_2>1$:
\begin{equation}\label{R-E29}
\int_0^t\langle t-\tau\rangle^{-\sigma_1}\langle \tau\rangle^{-\sigma_2}\,d\tau
\lesssim\langle t\rangle^{-\sigma_1}\ \hbox{ with }\   \langle t\rangle\triangleq \sqrt{1+t^2}.
\end{equation}
We shall also take advantage of the embeddings
$L^r\hookrightarrow \dot B^{-s_1}_{2,\infty}$
and $\dot B^\sigma_{p,1}\hookrightarrow  L^{2r}$
with
$$
s_1\triangleq \frac dr-\frac d2\quad\hbox{and}\quad
\sigma\triangleq \frac dp-\frac d{2r}\cdotp
$$
Let us emphasize that Condition \eqref{eq:s1} is equivalent to $\frac p2 \leq r \leq \min(2,\frac d2)\cdotp$
\medbreak
All the terms in $f,$ $g$ and $h,$ but $I(a)\Delta\theta^h$ will be treated thanks to
the following inequalities that follow from H\"older inequality and
the above embeddings:
\begin{equation}\label{R-E41}
\|FG\|_{\dot B^{-s_1}_{2,\infty}}\lesssim \|FG\|_{L^r}\leq \|F\|_{L^{2r}}\|G\|_{L^{2r}}
\lesssim \|F\|_{\dot B^\sigma_{p,1}}\|G\|_{\dot B^{\sigma}_{p,1}}.
\end{equation}
We shall often use the fact that, because $\sigma+2\leq \frac d2+1,$
\begin{eqnarray}\label{R-E54}
\sup_{0\leq\tau\leq t}\langle \tau\rangle^{\frac{d}{4r}+\frac k2}\|(\nabla^ka^{\ell},\nabla^k\upsilon^\ell,\nabla^k\theta^\ell)(\tau)\|_{L^{2r}}\lesssim \cD_{p}(t)\quad\hbox{for }\ k=0,1,2.
\end{eqnarray}
This is just a consequence of
$$
\|D^kz\|_{L^{2r}}^\ell\lesssim \|z\|_{\dot B^{\sigma+k}_{p,1}}^\ell \lesssim
\|z\|^\ell_{\dot B^{k+\frac d2-\frac d{2r}}_{2,1}}
$$
and of the fact that we have $-s_1<k+\frac d2-\frac d{2r}\leq \frac d2+1$ for
$k\in\{0,1,2\}.$
\medbreak
Then using also  that $\sigma\leq \frac dp-1$ (as $r\leq\frac d2$),
one can write that
$$\|z\|^h_{\dot B^\sigma_{p,1}}\lesssim \|z\|^h_{\dot B^{\frac dp-1}_{p,1}}.$$
Therefore as obviously $\alpha\geq \frac d{4r}+\frac12$ for small enough $\ep,$
 we  have
\begin{equation}\label{R-E45}
\sup_{0\leq\tau\leq t}\langle\tau\rangle^{\frac d{4r}} \|(\upsilon,a)(\tau)\|_{L^{2r}}+
\sup_{0\leq\tau\leq t}\langle\tau\rangle^{\frac d{4r}+\frac12} \|\nabla a(\tau)\|_{L^{2r}}
\lesssim\cD_p(t).
\end{equation}
Combining with \eqref{R-E41}, one can thus  bound  the terms
$\upsilon\cdot\nabla a$ and  $K_1(a)\nabla a$  as follows:
$$
\|\upsilon\cdot\nabla a\|_{\dot B^{-s_1}_{2,\infty}}+\|K_1(a)\nabla a\|_{\dot B^{-s_1}_{2,\infty}}
\lesssim \|(a,\upsilon)\|_{L^{2r}}\|\nabla a\|_{L^{2r}}
\lesssim \langle t\rangle^{-\frac d{2r}-\frac12}\cD_p^2(t).
$$
Now, as $\frac d{2r}+\frac12\geq\frac12(s_1+s)$ for all $s\leq1+\frac d2,$ we get for all $t\geq0,$
$$\sup_{-s_1+\ep\leq s\leq1+\frac d2}
\int_0^t\langle t-\tau\rangle^{-\frac {s_1+s}2}
\bigl(\|\upsilon\cdot\nabla a\|_{\dot B^{-s_1}_{2,\infty}}^\ell
\!+\!\|K_1(a)\nabla a\|_{\dot B^{-s_1}_{2,\infty}}^\ell\bigr)
d\tau\lesssim\langle t\rangle^{-\frac {s_1+s}2}
\bigl(\cD^2_{p}(t)\!+\!\cX^2_{p}(t)\bigr).
$$

The terms $a\,\div\upsilon^\ell,$ $\upsilon\cdot\nabla\upsilon^\ell,$ $I(a)\Delta\upsilon^\ell,$
$K_2(a)\nabla\theta^\ell,$ $\theta^\ell\nabla K_3(a),$ $\upsilon\cdot\nabla\theta^\ell,$
$I(a)\Delta\theta^\ell,$ $\wt K_1(a)\div\upsilon^\ell$ and
$\wt K_2(a)\theta^\ell\div\upsilon^\ell$  may be handled along the same lines.
\medbreak
To deal with  the other terms in $f,$ $g$ and $k,$
we shall also often use the fact that
\begin{equation}\label{R-E44}
\|t^{\frac{s_1}2+\frac d4-\frac\ep2}(a,\upsilon,\theta)\|_{\wt L_T^\infty(\dot B^{\frac dp}_{p,1})}
\lesssim\cD_p(t).
\end{equation}
This  may be  proved by  decomposing functions $a,$ $\upsilon$ and $\theta$
 in low and high frequencies, using the definition of $\cD_p(t)$ and the fact that, because $p\geq2$
and $\alpha\geq \frac{s_1}2+\frac d4-\frac\ep2,$
$$\begin{aligned}
\|t^{\frac{s_1}2+\frac d4-\frac\ep2}(a,\upsilon,\theta)\|_{\wt L_T^\infty(\dot B^{\frac dp}_{p,1})}
&\lesssim \|t^{\frac{s_1}2+\frac d4-\frac\ep2}(a,\upsilon,\theta)\|^\ell_{\wt L_T^\infty(\dot B^{\frac d2}_{2,1})}
+\|t^{\frac{s_1}2+\frac d4-\frac\ep2}(a,\upsilon,\theta)\|^h_{\wt L_T^\infty(\dot B^{\frac dp}_{p,1})}\\
&\lesssim \|t^{\frac{s_1}2+\frac d4-\frac\ep2}(a,\upsilon,\theta)\|^\ell_{L_T^\infty(\dot B^{\frac d2-\ep}_{2,1})}
+\|t^{\alpha}(a,\upsilon,\theta)\|^h_{\wt L_T^\infty(\dot B^{\frac dp}_{p,1})}.
\end{aligned}
$$
As an example, let us  bound   $K_2(a)\nabla \theta^{h}.$
Then we use that, owing to \eqref{R-E41},  if $t\geq2,$
$$\displaylines{
\int_0^t\langle t-\tau\rangle^{-\frac {s_1+s}2}\|(K_2(a)\nabla \theta^{h})(\tau)\|_{\dot B^{-s_1}_{2,\infty}}\,d\tau\hfill\cr\hfill \lesssim
\int_0^t\langle t-\tau\rangle^{-\frac{s_{1}+s}2} \|a(\tau)\|_{L^{2r}}\|\nabla \theta^{h}(\tau)\|_{L^{2r}}\,d\tau =
\Big(\int^{1}_{0}+\int^{t}_{1}\Big)(\cdot\cdot\cdot)d\tau\triangleq I_{1}+I_{2}.}$$
 Using the fact that
$$
\|\nabla\theta^h\|_{L^{2r}}\lesssim \|\theta\|^h_{\dot B^{\sigma+1}_{p,1}}
\lesssim \|\theta\|^h_{\dot B^{\frac dp}_{p,1}},
$$
and remembering the definitions of $\cX_{p}(t)$ and $\cD_{p}(t),$ and Inequality
\eqref{R-E45},  we obtain
\begin{eqnarray}\label{R-E58}
I_{1}&\!\!\lesssim\!\!& \langle t\rangle^{-\frac{s_{1}+s}2} \Big(\sup_{0\leq \tau \leq 1}\|a(\tau)\|_{L^{2r}}\Big)\int^{1}_{0}\|\theta(\tau)\|^{h}_{\dot{B}^{\frac{d}{p}}_{p,1}}\,d\tau
\nonumber\\ &\!\!\lesssim\!\!& \langle t\rangle^{-\frac{s_{1}+s}2} \cD_{p}(1)\cX_{p}(1).
\end{eqnarray}
Next, because  $\langle \tau\rangle \thickapprox \tau$ when $\tau\geq1,$ one has
for all $-s_1<s\leq\frac d2+1,$
$$\begin{aligned}
I_{2}&=\int_1^t\langle t-\tau\rangle^{-\frac {s_{1}+s}2}\langle \tau\rangle^{-\frac {d}{2r}-\frac12}
\bigl(\langle\tau\rangle^{\frac {d}{4r}}\|a(\tau)\|_{L^{2r}}\bigr)\bigl(\tau^{\frac {d}{4r}+\frac12}\|\nabla \theta^h(\tau)\|_{L^{2r}}\bigr)\,d\tau
\\ &\lesssim \sup_{1\leq \tau \leq t}\!\Big(\langle\tau\rangle^{\frac {d}{4r}}\|a(\tau)\|_{L^{2r}}\!\Big)
\sup_{1\leq \tau \leq t}\!\Big(\tau^{\alpha}\|\theta(\tau)\|^{h}_{\dot{B}^{\frac{d}{p}}_{p,1}}\!\Big)\!\!\int_1^t\!\langle t-\tau\rangle^{-\frac{s_{1}+s}2}
\langle \tau\rangle^{-\frac d{2r}-\frac12}\,d\tau
\\ &\lesssim \langle t\rangle^{-\frac{s_{1}+s}2} \cD_{p}^2(t).
\end{aligned}$$
Therefore, for $t\geq2$, we arrive at
\begin{equation}\label{R-E60}
\int_0^t\langle t-\tau\rangle^{-\frac {s_{1}+s}2}\|(K_2(a)\nabla \theta^{h})(\tau)\|_{\dot B^{-s_1}_{2,\infty}}\,d\tau \lesssim\langle t\rangle^{-\frac{s_{1}+s}2} \Big(\cD_{p}(t)\cX_{p}(t)+\cD_{p}^2(t)\Big)\cdotp
\end{equation}
The case $t\leq2$ is obvious as $\langle t\rangle\thickapprox1$ and
$\langle t-\tau\rangle\thickapprox1$ for $0\leq\tau\leq t\leq 2,$ and
\begin{eqnarray}\label{R-E61}
\int_0^t\|K_2(a)\nabla \theta^{h}\|_{L^{r}}\,d\tau\leq \|a\|_{L^\infty_t(L^{2r})}\|\nabla \theta^{h}\|_{L_t^1(L^{2r})}\lesssim \cD_{p}(t)\cX_{p}(t).
\end{eqnarray}
The terms with $a\div\upsilon^h,$ $I(a)\Delta\upsilon^h,$ $\upsilon\cdot\nabla\upsilon^h,$  $\theta^{h}\nabla K_3(a)$, $\upsilon\cdot\nabla\theta^{h}$ and  $\widetilde{K}_1(a)\div \upsilon^{h}$ may be treated along the same lines.
For the term corresponding to  $\frac{Q(\nabla \upsilon, \nabla\upsilon)}{1+a}$ one may write, thanks
to \eqref{eq:smalla},
$$\begin{aligned}
\int_{0}^{t}\langle t-\tau\rangle^{-\frac {s_{1}+s}{2}}\Big\|\frac{Q(\nabla \upsilon, \nabla\upsilon)}{1+a}\Big\|^{\ell}_{\dot{B}^{-s_{1}}_{2,\infty}}\,d\tau&\lesssim \int_{0}^{t}\langle t-\tau\rangle^{-\frac {s_{1}+s}{2}}\|\nabla \upsilon(\tau)\|^2_{L^{2r}}\,d\tau\\
&\lesssim \int_{0}^{t}\langle t-\tau\rangle^{-\frac {s_{1}+s}{2}}
\bigl(\|\nabla \upsilon^\ell(\tau)\|^2_{L^{2r}}+\|\nabla \upsilon^h(\tau)\|^2_{L^{2r}}\bigr)\,d\tau.
\end{aligned}
$$
Now, \eqref{R-E54} implies that
\begin{multline}\label{R-E78}
\int_{0}^{t}\langle t-\tau\rangle^{-\frac {s_{1}+s}{2}}\|\nabla \upsilon^{\ell}(\tau)\|^2_{L^{2r}}\,d\tau \\
\lesssim
\Big(\sup_{0\leq\tau\leq t}\langle \tau\rangle^{\frac{d}{4r}+\frac12}\|\nabla\upsilon^{\ell}(\tau)\|_{L^{2r}}\Big)^2
\int_0^t\langle t-\tau\rangle^{-\frac{s_{1}+s}2}\langle\tau\rangle^{-\frac d{2r}-1}\,d\tau \lesssim
 \langle t\rangle^{-\frac {s_{1}+s}{2}}{\cD}_{p}^2(t)
 \end{multline}
and we have  if $t\geq2$,
$$\begin{aligned}
\int_{0}^{t}\langle t-\tau\rangle^{-\frac {s_{1}+s}{2}}\|\nabla \upsilon^{h}(\tau)\|^2_{L^{2r}}\,d\tau=
\Big(\int^{1}_{0}+\int^{t}_{1}\Big)(\cdot\cdot\cdot)d\tau\triangleq J_{1}+J_{2}.
\end{aligned}
$$
Using that $\sigma\leq\frac dp-1$ and interpolation  (see Lemma \ref{lem3.3}), we arrive at
$$
\|\nabla\upsilon^h\|_{L^{2r}}\lesssim
\|\nabla\upsilon^h\|_{\dot{B}^{\sigma}_{p,1}}\lesssim \|\nabla\upsilon^h\|^{\frac{1}{2}}_{\dot{B}^{\frac{d}{p}-2}_{p,1}}
\|\nabla\upsilon^h\|^{\frac{1}{2}}_{\dot{B}^{\frac{d}{p}}_{p,1}},$$
which leads to
\begin{eqnarray}\label{R-E80}
J_{1}&\!\!\lesssim\!\!& \langle t\rangle^{-\frac{s_{1}+s}2} \int_{0}^{1}\|\nabla \upsilon^{h}(\tau)\|^2_{L^{2r}}\,d\tau\lesssim \langle t\rangle^{-\frac{s_{1}+s}2} \cX_{p}^2(1).
\end{eqnarray}
For $J_2,$ we get  for all $-s_1<s\leq\frac d2+1,$
$$
J_{2}\lesssim\int_{1}^{t} \langle t-\tau\rangle^{-\frac{s_{1}+s}2}\langle \tau\rangle^{-\frac{d}{2r}-\frac12}\big(\|\tau^{\frac{d}{4r}+\frac14} \nabla\upsilon^{h}\|_{\dot{B}^{\frac{d}{2r}-1}_{p,1}}\big)^2\,d\tau\lesssim
\langle t\rangle^{-\frac{s_1+s}2}\,\cD_p^2(t).
$$
So finally, if $t\geq2,$
\begin{eqnarray}\label{R-E82}
\int_{0}^{t}\langle t-\tau\rangle^{-\frac {s_{1}+s}{2}}\|\nabla \upsilon^{h}(\tau)\|^2_{L^{2r}}\,d\tau
\lesssim \langle t\rangle^{-\frac {s_{1}+s}{2}}\Big(\cX_{p}^2(t)+{\cD}_{p}^2(t)\Big)\cdotp
\end{eqnarray}
The above inequality also holds true for $t\leq2$ since $\langle t\rangle\thickapprox1$ and
$\langle t-\tau\rangle\thickapprox1$ for $0\leq\tau\leq t\leq 2.$
Therefore, combining with \eqref{R-E78} and \eqref{R-E82}, we end up with
\begin{eqnarray}\label{R-E84}
&&\int_{0}^{t}\langle t-\tau\rangle^{-\frac {s_{1}+s}{2}}\Big\|\frac{Q(\nabla \upsilon, \nabla\upsilon)}{1+a}\Big\|^{\ell}_{\dot{B}^{-s_{1}}_{2,\infty}}\,d\tau
\lesssim\langle t\rangle^{-\frac {s_{1}+s}{2}}\big(\cX_{p}^2(t)+{\cD}_{p}^2(t)\big).
\end{eqnarray}
The term $\tilde K_2(a) \theta^{h}\div\upsilon^{\ell}$ may be treated as $K_2(a)\nabla \theta^{h}$
 and  $\tilde K_2(a) \theta^{\ell}\div\upsilon^{h},$ as for example $a\div\upsilon^h.$
To bound  $\tilde K_2(a)\theta^{h}\div\upsilon^{h},$ one has to proceed slightly
 differently. If $t\geq2$ then we start as usual with
$$\begin{aligned}
\int_{0}^{t}\langle t-\tau\rangle^{-\frac {s_{1}+s}{2}}\|\theta^{h}(\tau)\|_{L^{2r}}\|\div \upsilon^{h}(\tau)\|_{L^{2r}}\,d\tau=
\Big(\int^{1}_{0}+\int^{t}_{1}\Big)(\cdot\cdot\cdot)d\tau\triangleq \tilde{J}_{1}+\tilde{J}_{2}.
\end{aligned}
$$
For $J_1,$  one can write
$$\begin{aligned}
\tilde{J}_{1}&\lesssim \langle t\rangle^{-\frac {s_{1}+s}{2}}\|\theta^{h}\|_{L^2([0,1],L^{2r})}\|\div \upsilon^{h}\|_{L^2([0,1],L^{2r})}\\
&\lesssim \langle t\rangle^{-\frac {s_{1}+s}{2}}\Big(\|\theta\|^{h}_{L^1([0,1],\dot{B}^{\frac{d}{p}}_{p,1})}+\|\theta\|^{h}_{\tilde{L}^\infty([0,1],\dot{B}^{\frac{d}{p}-2}_{p,1})}\Big) \Big(\|\upsilon\|^{h}_{L^1([0,1],\dot{B}^{\frac{d}{p}+1}_{p,1})}+\|\upsilon\|^{h}_{\tilde{L}^\infty([0,1],\dot{B}^{\frac{d}{p}-1}_{p,1})}\Big)\\
&\lesssim  \langle t\rangle^{-\frac{s_{1}+s}2} \cX_{p}^2(1)
\end{aligned}$$
and for all $-s_1<s\leq \frac d2+1$, we have, thanks to \eqref{R-E54},
$$\begin{aligned}
\tilde{J}_{2}&\lesssim  \int_1^t\langle t-\tau\rangle^{-\frac {s_{1}+s}2}\langle \tau\rangle^{-\frac{d}{2r}-\frac12}
\bigl(\tau^{\frac{d}{4r}}\|\theta^{h}(\tau)\|_{L^{2r}}\bigr)\bigl(\tau^{\frac{d}{4r}+\frac {1}{2}}\|\div \upsilon^{h}(\tau)\|_{L^{2r}}\bigr)\,d\tau \\  &\lesssim  \langle t\rangle^{-\frac{s_{1}+s}2} {\cD}_{p}^2(t).
\end{aligned}$$
The case $t\leq2$ is left to the reader.
\medbreak
To bound  the term corresponding to  $I(a)\Delta\theta^h,$ one has to
 consider the cases $2\leq p\leq\frac d2$ and $p>\frac d2$
separately.
If $2\leq p\leq\frac d2$ then we have, denoting $\frac1q\triangleq \frac1p+\frac1{2r}$
and $s_2\triangleq\frac dp+\frac{d}{2r}-\frac d2$
and using the embedding $L^q\hookrightarrow \dot B^{-s_2}_{2,\infty},$
\begin{eqnarray}\label{R-E74}
\|I(a)\Delta\theta^{h}\|^{\ell}_{\dot{B}^{-s_{2}}_{2,\infty}}\lesssim \|I(a)\Delta\theta^{h}\|_{L^{q}}\lesssim \|a\|_{L^{2r}}\|\Delta\theta^{h}\|_{L^p}\lesssim \|a\|_{L^{2r}}\|\theta\|^{h}_{\dot{B}^{\frac dp}_{p,1}}
\end{eqnarray}
By repeating the  procedure leading to \eqref{R-E60}-\eqref{R-E61} and using that $-s_2\leq-s_1,$ we get
$$
\begin{aligned}
\int_{0}^{t}\langle t-\tau\rangle^{-\frac {s_{1}+s}{2}}\|I(a)\Delta\theta^{h}\|^{\ell}_{\dot{B}^{-s_{1}}_{2,\infty}}\,d\tau&\lesssim \int_{0}^{t}\langle t-\tau\rangle^{-\frac {s_{1}+s}{2}}\|I(a)\Delta\theta^{h}\|^{\ell}_{\dot{B}^{-s_{2}}_{2,\infty}}\,d\tau\\
&\lesssim \langle t\rangle^{-\frac {s_{1}+s}{2}}\Big({\cD}_{p}^2(t)+\cX_{p}(t){\cD}_{p}(t)\Big)\cdotp
\end{aligned}
$$
In the case   $d/2< p<d,$  the regularity exponent $\frac dp-2$ is
negative, which precludes $\Delta\theta^h$ to be in any Lebesgue space.
 However, it follows from \eqref{R-E23}, taking $\sigma=2-d/p,$ that
 $$\|I(a)\Delta\theta^h\|_{\dot B^{-s_0}_{2,\infty}}^\ell\leq C \bigl(\|I(a)\|_{L^{p^*}}+\|I(a)\|_{\dot B^{2-\frac {d}{p}}_{p,1}}\bigr)\|\Delta\theta^h\|_{\dot B^{\frac{d}{p}-2}_{p,\infty}}$$
  with $s_0\triangleq\frac{2d}p-\frac d2$ and $\frac1{p^*}=\frac12-\frac1p\cdotp$
  \medbreak
Now, Proposition \ref{prop:compo} and obvious embedding ensure that
$$
\|I(a)\|_{\dot B^{2-\frac {d}{p}}_{p,1}}\lesssim\|a\|_{\dot B^{2-\frac dp}_{p,1}}\lesssim
\|a\|^\ell_{\dot B^{2-s_0}_{2,1}}+\|a\|^h_{\dot B^{\frac dp}_{p,1}}
$$
and,  because $p^*\geq p,$  we have  $\dot B^{\frac dp}_{2,1}\hookrightarrow L^{p^*}$
and $\dot B^{s_0}_{p,1}\hookrightarrow L^{p^*},$ whence
$$
\|I(a)\|_{L^{p^*}}\lesssim\|a\|_{L^{p^*}}\lesssim
\|a\|_{\dot B^{\frac dp}_{2,1}}^\ell+ \|a\|_{\dot B^{\frac dp}_{p,1}}^h.
$$
Therefore we eventually get
\begin{equation}\label{R-E68}
\|I(a)\Delta\theta^{h}\|^{\ell}_{\dot{B}^{-s_{0}}_{2,\infty}}\lesssim (\|a\|_{\dot B^{2-s_0}_{2,1}}^\ell+\|a\|_{\dot B^{\frac dp}_{2,1}}^\ell
+\|a\|_{\dot B^{\frac dp}_{p,1}}^h)\|\theta^h\|_{\dot B^{\frac dp}_{p,1}}.
\end{equation}
If $t\geq2$  then \eqref{R-E68} implies that
\begin{eqnarray}\label{R-E69}
\int_{0}^{t}\langle t-\tau\rangle^{-\frac {s_{1}+s}{2}}\|I(a)\Delta\theta^{h}\|^{\ell}_{\dot{B}^{-s_{0}}_{2,\infty}}\,d\tau &\!\!\!\lesssim\!\!\!&\int_{0}^{t}\langle t-\tau\rangle^{-\frac {s_{1}+s}{2}}\bigl(\|a\|^\ell_{\dot B^{\min(2-s_0,\frac dp)}_{2,1}}\!+\|a\|_{\dot B^{\frac dp}_{p,1}}^h\bigr)\|\theta\|^h_{\dot B^{\frac dp}_{p,1}}d\tau \nonumber\\&\!\!\!=\!\!\!&
\Big(\int^{1}_{0}+\int^{t}_{1}\Big)(\cdot\cdot\cdot)d\tau\triangleq \tilde{I}_{1}+\tilde{I}_{2}.
\end{eqnarray}
On the one hand, the definitions of $\mathcal{D}_{p}$ and $\mathcal{X}_{p}$ ensure that
\begin{eqnarray}\label{R-E70}\tilde{I}_{1}\lesssim  \langle t\rangle^{-\frac {s_{1}+s}{2}}{\cD}_{p}(1)\cX_{p}(1).\end{eqnarray}
On the other hand, thanks to the fact that
\begin{eqnarray}\label{R-E71}
\sup_{\tau\in[1,t]}\tau^\alpha\|\theta^{h}(\tau)\|_{\dot{B}^{\frac{d}{p}}_{p,1}}\lesssim \cD_p(t),
\end{eqnarray}
 that $\alpha\geq\frac{s_1+s}2$ for all $s\leq1+\frac d2$
 and that $2-s_0>-s_1$ for all $s_1$ satisfying \eqref{eq:s1} (if $p>\frac d2$),
 we end up with
$$\begin{aligned}\tilde{I}_{2}&\lesssim
\int_{1}^{t}\langle t-\tau\rangle^{-\frac {s_{1}+s}{2}}\langle\tau\rangle^{-\alpha}
\bigl(\|a^\ell\|_{\dot B^{\min(2-s_0,\frac dp)}_{2,1}}\!+\|a\|_{\dot B^{\frac dp}_{p,1}}^h\bigr)
\bigl(\tau^{\alpha}\|\theta\|^h_{\dot B^{\frac dp}_{p,1}}\bigr)\,d\tau
\\&\lesssim \langle t\rangle^{-\frac {s_{1}+s}{2}}\Big({\cD}_{p}^2(t)+\cX_p(t)\cD_p(t)\Big).
\end{aligned}$$
Putting together with \eqref{R-E69} and using that $-s_0\leq-s_1,$
 we thus get if $t\geq2,$
 $$
 \int_{0}^{t}\langle t-\tau\rangle^{-\frac {s_{1}+s}{2}}\|I(a)\Delta\theta^{h}\|^{\ell}_{\dot{B}^{-s_{1}}_{2,\infty}}\,d\tau \lesssim  \langle t\rangle^{-\frac {s_{1}+s}{2}}\Big({\cD}_{p}^2(t)+\cX_p(t)\cD_p(t)\Big)\cdotp
$$
 That   the above inequality  holds for  $t\leq2$   is a direct consequence of the definitions
of $\cD_p$ and $\cX_p.$
\medbreak
Putting together all the above estimates completes the proof of Inequality \eqref{R-E51}.
Then,   combining with \eqref{R-E49} for bounding the term of \eqref{R-E30} pertaining to the data, we get
\begin{equation}\label{R-E89}
\: \langle t\rangle^{\frac {s_{1}+s}2}\|(a,\upsilon,\theta)(t)\|_{\dot B^s_{2,1}}^\ell \lesssim
{\cD}_{p,0}+{\cD}^2_{p}(t)+\cX^2_{p}(t) \quad\hbox{for all }\, t\geq0\ \hbox{ and }\:
 -s_{1}<s\leq \frac d2+1.
\end{equation}


\subsubsection*{Step 2: Decay estimates for the high frequencies of $(\nabla a, \upsilon, \theta)$}
Let us first recall the following  elementary result (see the proof in \cite{DX}).
\begin{lem}\label{lem4.2}
Let $X:[0,T]\to\R_+$ be a continuous function. Assume  that
$X^p$ is differentiable for some $p\geq1$ and satisfies
$$
\frac{1}{p}\frac{d}{dt}X^{p}+ M X^{p}\leq FX^{p-1}
$$
for some constant $M\geq0$ and measurable function $F:[0,T]\to\R_+.$
\medbreak
Denote  $X_{\varepsilon}=(X^{p}+\varepsilon^p)^{1/p}$ for $\varepsilon>0$. Then it holds that
$$\frac{d}{dt}X_{\varepsilon}+M X_{\varepsilon}\leq F+M\varepsilon.$$
\end{lem}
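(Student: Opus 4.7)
The statement is an elementary regularization lemma whose only role is to bypass the lack of differentiability of $X$ at its zeros by passing through the strictly positive function $X_{\varepsilon}$. My plan is to reduce everything to a direct computation of $\frac{d}{dt} X_\varepsilon$ in terms of $\frac{d}{dt} X^p$, and then exploit the two pointwise inequalities $X_\varepsilon \geq X$ and $X_\varepsilon \geq \varepsilon$.

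First I would observe that since $X^p$ is differentiable by hypothesis, so is $X_\varepsilon^p = X^p + \varepsilon^p$, and because $X_\varepsilon^p \geq \varepsilon^p > 0$ the composition with the smooth map $y \mapsto y^{1/p}$ on $(0,\infty)$ shows that $X_\varepsilon$ itself is differentiable. The chain rule then gives
\[
\frac{d}{dt} X_\varepsilon \;=\; \frac{1}{p\, X_\varepsilon^{\,p-1}} \frac{d}{dt} X_\varepsilon^{\,p} \;=\; \frac{1}{p\, X_\varepsilon^{\,p-1}} \frac{d}{dt} X^p.
\]

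Next, I would plug the assumed differential inequality $\frac{1}{p}\frac{d}{dt}X^p \leq -MX^p + F X^{p-1}$ into the identity above and then add $M X_\varepsilon$ to both sides. Putting everything over the common denominator $X_\varepsilon^{p-1}$, the right-hand side of $\frac{d}{dt}X_\varepsilon + M X_\varepsilon$ becomes
\[
\frac{M\bigl(X_\varepsilon^{\,p} - X^p\bigr) + F X^{p-1}}{X_\varepsilon^{\,p-1}} \;=\; \frac{M\,\varepsilon^p + F X^{p-1}}{X_\varepsilon^{\,p-1}}.
\]

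The last step is purely algebraic. Since $p \geq 1$ and $X_\varepsilon \geq X \geq 0$, monotonicity of $y \mapsto y^{p-1}$ gives $X^{p-1}/X_\varepsilon^{p-1} \leq 1$, hence the second summand is bounded by $F$ (using $F\geq 0$). Similarly $X_\varepsilon \geq \varepsilon$ yields $\varepsilon^p / X_\varepsilon^{p-1} \leq \varepsilon$, so the first summand is bounded by $M\varepsilon$. Adding these two bounds gives exactly the desired inequality. There is essentially no obstacle here: the sole subtlety is to handle the case $p=1$ correctly (in which $X_\varepsilon^{p-1}=1$ and the estimate is trivial) and to make sure $p\geq1$ is used where it is needed, namely in the monotonicity $X^{p-1}\leq X_\varepsilon^{p-1}$.
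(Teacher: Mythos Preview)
Your proof is correct. The paper itself does not give a proof of this lemma but merely cites \cite{DX}; your argument is the natural and standard one, and there is no reason to expect it to differ from the cited reference.
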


Let $\cP\triangleq\Id+\nabla(-\Delta)^{-1}\mathrm{div}$ be the Leray projector onto divergence-free vector fields.
It follows from \eqref{R-E9} that  $\cP u$ satisfies the following ordinary heat equation:
$$
\d_t\cP \upsilon -\wt\mu\Delta\cP \upsilon=\cP g.
$$
Applying $\ddj$ to the above equation gives for all $j\in\Z,$
$$
\d_t\cP \upsilon_j -\wt\mu\Delta\cP \upsilon_j=\cP g_j\quad\hbox{with }\ \upsilon_j\triangleq \ddj \upsilon\ \hbox{ and }\
g_j\triangleq \ddj g.
$$
According to Proposition \ref{prop3.4}, we thus  end up  for some constant $c_p>0$ with
$$
\frac1p\frac d{dt}\|\cP \upsilon_j\|_{L^p}^p+c_p\wt\mu2^{2j}\|\cP \upsilon_j\|_{L^p}^p
\leq \|\cP g_j\|_{L^p}\|\cP \upsilon_j\|_{L^p}^{p-1}.
$$
Hence, using the notation $\|\cdot\|_{\varepsilon,\, L^p}\triangleq(\|\cdot\|_{L^p}^p+\varepsilon^p)^{1/p},$
it follows from Lemma \ref{lem4.2} that for all $\ep>0,$
\begin{equation}\label{R-E94}
\frac d{dt}\|\cP \upsilon_j\|_{\varepsilon,\, L^p}+c_p\wt\mu2^{2j}\|\cP \upsilon_j\|_{\varepsilon,\,L^p}\leq \|\cP g_j\|_{L^p}+c_p\wt\mu2^{2j}\varepsilon.
\end{equation}
Next, we observe that $(a,w)$ with $w$ being the   \emph{effective velocity}
 defined in \eqref{eq:w} fulfills
$$
 \left\{\begin{array}{l}
 \d_tw-\Delta w=\nabla(-\Delta)^{-1}(f-\div g)-\gamma\nabla \theta+w-(-\Delta)^{-1}\nabla a,\\[1ex]
 \d_ta+a=f-\div w.\end{array}\right.
$$
Arguing as for $\cP\upsilon,$ one can  arrive at
\begin{multline}\label{R-E99}
\frac d{dt}\|w_j\|_{\varepsilon,\, L^p}+c_{p}2^{2j}\|w_j\|_{\varepsilon,\, L^p}\leq \|\nabla(-\Delta)^{-1}(f_{j}-\div g_{j})\|_{L^p}\\
+\|-\gamma\nabla\theta_{j}+w_{j}-(-\Delta)^{-1}\nabla a_{j}\|_{L^p}+c_{p}2^{2j}\varepsilon,
\end{multline}
and, denoting $R_j^i\triangleq[u\cdot\nabla,\d_i\ddj]a$ for $i=1,\cdots,d,$
\begin{multline}\label{R-E100}
\frac d{dt}\|\nabla a_j\|_{\varepsilon,\, L^p}+\|\nabla a_j\|_{\varepsilon,\, L^p}\leq\Big(\frac{1}{p}\|\div \upsilon\|_{L^\infty}\|\nabla a_j\|_{L^p}+
\|\nabla\dot{\Delta}_{j}(a\div \upsilon)\|_{L^p}\\+C2^{2j}\|w_{j}\|_{L^p}+\|R_{j}\|_{L^p}\Big)+\varepsilon.
\end{multline}
Similarly, as the temperature $\theta$ satisfies
\begin{equation}\label{R-E98}
\d_t\theta-\beta\Delta\theta+\gamma\div w+a=k,
 \end{equation}
we have, according to Proposition \ref{prop3.4},  that
\begin{eqnarray}\label{R-E101}
\frac d{dt}\|\theta_j\|_{\varepsilon,\, L^p}+c_{p}\beta2^{2j}\|\theta_j\|_{\varepsilon,\, L^p}\leq \|k_{j}-\gamma\div w_{j}-a_{j}\|_{L^p}+c_{p}2^{2j}\varepsilon.
\end{eqnarray}
Adding up \eqref{R-E94},  \eqref{R-E99}, $A c_p\times$\eqref{R-E100}
and $B2^{-j}\times$\eqref{R-E101} for some $A,B>0$ (to be chosen afterward)
gives that
$$\displaylines{
\frac{d}{dt}\bigl(\|\cP \upsilon_j\|_{\varepsilon,\, L^p}+\|w_j\|_{\varepsilon,\, L^p}+ A c_p\|\nabla a_j\|_{\varepsilon,\, L^p}+2^{-j}\beta B\|\theta_j\|_{\varepsilon,\, L^p}\bigr)
+c_p2^{2j}\bigl(\wt\mu\|\cP \upsilon_j\|_{\varepsilon,\, L^p}+\|w_j\|_{\varepsilon,\, L^p})\hfill\cr\hfill+ A c_p\|\nabla a_j\|_{\varepsilon,\, L^p}+c_{p}\beta B2^{j}\|\theta_j\|_{\varepsilon,\, L^p}
\leq  \bigl(\|\cP g_j\|_{L^p}+\|\nabla(-\Delta)^{-1}(f_j-\div g_j)\|_{L^p}\bigr)\hfill\cr\hfill
+ A c_p\Bigl(\frac{1}{p}\|\mathrm{div}\upsilon\|_{L^\infty}\|\nabla a_j\|_{L^p}+\|\nabla\ddj(a\div \upsilon)\|_{L^p}+\|R_j\|_{L^p}\Bigr)+2^{-j}B\|k_{j}\|_{L^p}
\hfill\cr\hfill+2^{-j}B\|\gamma\div w_{j}+a_{j}\|_{L^p}
+\|-\gamma\nabla\theta_{j}+w_j-(-\Delta)^{-1}\nabla a_{j}\|_{L^p}+C A c_p2^{2j}\|w_j\|_{L^p}+M_{\varepsilon},}
$$
where $M_{\varepsilon}\triangleq(c_{p}\wt\mu2^{2j}+c_{p}2^{2j}+ A c_{p}+c_{p}B\beta 2^{j})\varepsilon.$
\medbreak
Because  $(-\Delta)^{-1}$ is a homogeneous Fourier multiplier of degree $-2$, we have
\begin{eqnarray}\label{R-E102}
\|(-\Delta)^{-1}\nabla a_{j}\|_{L^p}\lesssim 2^{-2j}\|\nabla a_{j}\|_{L^p}\lesssim 2^{-2j_0}\|\nabla a_{j}\|_{\varepsilon,\, L^p}
\quad\hbox{for }\ j\geq j_0-1.
\end{eqnarray}
Choosing first $B$ large enough, then $A$ suitably small, and finally $ j_{0}$ suitably large, one can  absorb
the last line of the above inequality  by the l.h.s. Hence,  there exists a constant $c_{0}>0$ such that for all $j\geq j_{0}-1$,
$$\displaylines{
\frac{d}{dt}\bigl(\|\cP \upsilon_j\|_{\varepsilon,\, L^p}+\|w_j\|_{\varepsilon,\, L^p}+ A c_p\|\nabla a_j\|_{\varepsilon,\, L^p}+2^{-j}\beta B\|\theta_j\|_{\varepsilon,\, L^p}\bigr)
+c_0\bigl(\|\cP \upsilon_j\|_{\varepsilon,\, L^p}+\|w_j\|_{\varepsilon,\, L^p}\hfill\cr\hfill+A c_p\|\nabla a_j\|_{\varepsilon,\, L^p}+2^{-j}\beta B\|\theta_j\|_{\varepsilon,\, L^p})
\leq  \bigl(\|g_j\|_{L^p}+\|\dot{\Delta}_{j}(a\upsilon)\|_{L^p}\bigr)\hfill\cr\hfill
+A c_p\Bigl(\frac{1}{p}\|\mathrm{div}u\|_{L^\infty}\|\nabla a_j\|_{L^p}+\|\nabla\ddj(a\div u)\|_{L^p}+\|R_j\|_{L^p}\Bigr)+2^{-j}B \|k_{j}\|_{L^p}+M_{\varepsilon}.}
$$
Then, integrating in time and having $\varepsilon$ tend to $0$, we arrive for all $j\geq j_0-1$ at
\begin{equation}\label{R-E103}
e^{c_0t}\|(\cP \upsilon_j,w_j,\nabla a_j, 2^{-j}\theta_j)(t)\|_{L^p}
\lesssim \|(\cP \upsilon_j,w_j,\nabla a_j, 2^{-j}\theta_j)(0)\|_{L^p}+\int_0^te^{c_0\tau}S_j(\tau)\,d\tau
\end{equation}
with $S_j\triangleq S_j^1+\cdots+S_j^6$ and
$$\displaylines{
S_j^1\triangleq  \|\ddj(a\upsilon)\|_{L^p},\quad
S_j^2\triangleq\|g_j\|_{L^p},\quad S_j^3\triangleq 2^{-j}\|k_j\|_{L^p},\cr
S_j^4\triangleq\|\nabla\ddj(a\mathrm{div}\,\upsilon)\|_{L^p},\quad
S_j^5\triangleq \|R_j\|_{L^p}, \quad
S_j^6\triangleq\|\div \upsilon\|_{L^\infty}\|\nabla a_j\|_{L^p}.}$$
It is clear that $(\upsilon_j,\nabla a_j,2^{-j}\theta_j)$ satisfies a similar inequality. Indeed, we have
$$
 \upsilon=w-\nabla(-\Delta)^{-1}a+\cP \upsilon
$$
which leads  for $j\geq j_0-1$  to
$$
\|\upsilon_j\|_{L^p}\lesssim \|w_j\|_{L^p}+\|\cP \upsilon_j\|_{L^p}+ 2^{-2j_0}\|\nabla a_j\|_{L^p}.
$$
Therefore,
 there exists a constant $c_{0}>0$ such that  for all $j\geq j_{0}-1$ and $t\geq0,$
$$
\|(2^ja_j,\upsilon_j, 2^{-j}\theta_j)(t)\|_{L^p}\lesssim e^{-c_{0}t}\|(2^ja_j(0),\upsilon_j(0),2^{-j}\theta_j(0))\|_{L^p}+\int_{0}^{t} e^{-c_{0}(t-\tau)}S_{j}(\tau)\,d\tau.
$$
Now, multiplying both sides by $\langle t\rangle^{\alpha}2^{j(\frac dp-1)},$  taking the supremum on $[0,T],$
and summing up over $j\geq j_0-1$ yields
\begin{multline}\label{R-E106}\|\langle t\rangle^\alpha a\|^h_{\wt L^\infty_T(\dot B^{\frac dp}_{p,1})}+
\|\langle t\rangle^\alpha\upsilon\|^h_{\wt L^\infty_T(\dot B^{\frac dp-1}_{p,1})}+\|\langle t\rangle^\alpha\theta\|^h_{\wt L^\infty_T(\dot B^{\frac dp-2}_{p,1})}\\\lesssim\|a_0\|_{\dot B^{\frac dp}_{p,1}}^h+
\|\upsilon_0\|_{\dot B^{\frac dp-1}_{p,1}}^h+\|\theta_0\|_{\dot B^{\frac dp-2}_{p,1}}^h
\!+\sum_{j\geq j_0-1}\sup_{0\leq t\leq T}\biggl(\langle t\rangle^\alpha\!\int_0^t\!e^{-c_0(t-\tau)}2^{j(\frac dp-1)}S_j(\tau)\,d\tau\biggr)\cdotp
\end{multline}
To treat  the case  $T\leq2,$ we use the fact that
\begin{equation}\label{R-E107}
\sum_{j\geq j_0-1}\sup_{0\leq t\leq 2}\biggl(\langle t\rangle^\alpha\!\int_0^t\!e^{-c_0(t-\tau)}2^{j(\frac dp-1)}S_j(\tau)\,d\tau\biggr)\lesssim \int _{0}^{2}\sum_{j\geq j_0-1}2^{j(\frac dp-1)}S_j(\tau)\,d\tau\cdotp
\end{equation}
The terms in $S_j^1,$ $S_j^4,$ $S_j^5$ and $S_j^6$
as well as those in $S_j^2$ corresponding to $\upsilon\cdot\nabla\upsilon,$
$I(a)\Delta\upsilon$ or $K_1(a)\nabla a$ may be estimated exactly as in \cite{DX}.
Therefore,  it is only a matter of handling the `new'   terms in   $S_j^2,$ and $S_j^3,$ that is \begin{equation}\label{eq:new}
K_2(a)\nabla \theta,\ \theta\nabla K_3(a),\ \upsilon\cdot\nabla\theta,\  I(a)\Delta\theta, \ \frac{Q(\nabla \upsilon, \nabla\upsilon)}{1+a},\ \tilde{K}_{1}(a)\div\upsilon\ \mbox{ and } \ \tilde{K}_{2}(a)\theta\div\upsilon.
\end{equation}
To this end, we shall often use the fact that, by interpolation, we have
\begin{eqnarray}\label{R-E108}
\|a\|_{L^2_{t}(\dot{B}^{\frac{d}{p}}_{p,1})}+\|\upsilon\|_{L^2_{t}(\dot{B}^{\frac{d}{p}}_{p,1})}\lesssim {\cX}_{p}(t).
\end{eqnarray}
For $K_2(a)\nabla \theta$, we still use the  decomposition
$$ K_2(a)\nabla \theta=K_2(a) \nabla \theta^{\ell}+K_2(a) \nabla \theta^{h}.$$
Thanks to H\"{o}lder inequality and Propositions \ref{prop3.1} and \ref{prop:compo}, we get
\begin{eqnarray}\label{R-E109}
&&\|K_2(a)\nabla \theta\|^{h}_{L^1_{t}(\dot{B}^{\frac{d}{p}-1}_{p,1})}\lesssim \|a\|_{L^{2}_{t}(\dot{B}^{\frac{d}{p}}_{p,1})}\|\nabla
\theta^{\ell}\|_{L^2_{t}(\dot{B}^{\frac{d}{p}-1}_{p,1})}+\|a\|_{L^{\infty}_{t}(\dot{B}^{\frac{d}{p}}_{p,1})}\|\nabla
\theta^{h}\|_{L^1_{t}(\dot{B}^{\frac{d}{p}-1}_{p,1})}.
\end{eqnarray}
Now,  interpolation and embedding (recall that $p\geq2$) imply that
$$\|\nabla\theta^{\ell}\|_{L^2_{t}(\dot{B}^{\frac{d}{p}-1}_{p,1})}
\lesssim \|\theta^{\ell}\|^{\frac{1}{2}}_{L^{\infty}_{T}(\dot{B}^{\frac{d}{p}-1}_{p,1})}\|\theta^{\ell}\|^{\frac{1}{2}}_{L^1_{t}(\dot{B}^{\frac{d}{p}+1}_{p,1})}
\lesssim \|\theta\|^{\ell}_{L^{\infty}_{t}(\dot{B}^{\frac{d}{2}-1}_{2,1})}+\|\theta\|^{\ell}_{L^1_{T}(\dot{B}^{\frac{d}{2}+1}_{2,1})}\lesssim {\cX}_{p}(t)
$$ and
$$\|a\|_{L^{\infty}_{t}(\dot{B}^{\frac{d}{p}}_{p,1})}\lesssim \|a\|^{\ell}_{L^{\infty}_{t}(\dot{B}^{\frac{d}{2}-1}_{2,1})}+\|a\|^{h}_{L^{\infty}_{t}(\dot{B}^{\frac{d}{p}}_{p,1})}\lesssim {\cX}_{p}(t).$$
Hence, we arrive at
\begin{eqnarray}\label{R-E110}
\|K_2(a)\nabla \theta\|^{h}_{L^1_{t}(\dot{B}^{\frac{d}{p}-1}_{p,1})}\lesssim {\cX}^2_{p}(t).
\end{eqnarray}
Similarly,  it follows from Proposition \ref{prop3.1} that
$$\begin{aligned}
\|\theta\nabla K_3(a)\|^{h}_{L^1_{t}(\dot{B}^{\frac{d}{p}-1}_{p,1})}&\lesssim\|\theta^{\ell}\|_{L^{2}_{t}(\dot{B}^{\frac{d}{p}}_{p,1})}\|\nabla K_3(a)\|_{L^{2}_{T}(\dot{B}^{\frac{d}{p}-1}_{p,1})}+\|\theta^{h}\|_{L^{1}_{t}(\dot{B}^{\frac{d}{p}}_{p,1})}\|\nabla K_3(a)\|_{L^{\infty}_{T}(\dot{B}^{\frac{d}{p}-1}_{p,1})}\\&\lesssim {\cX}^2_{p}(t).\end{aligned}
$$
Next,  for  $\upsilon\cdot\nabla\theta^\ell$, we
 use the following obvious inequality
 \begin{equation}\label{R-E121b}
\|z\|^h_{\wt L^\infty_T(\dot B^{\frac dq-\varsigma}_{q,1})}\lesssim2^{-j_0\varsigma} \|z\|^h_{\wt L^\infty_T(\dot B^{\frac dq}_{p,1})}\quad\hbox{for }\ q=2,p\ \hbox{ and }
 \varsigma\geq0,
\end{equation}
which, combined with Proposition \ref{prop3.1} implies that
$$\begin{aligned}
\|\upsilon\cdot\nabla\theta^\ell\|^{h}_{L^1_{t}(\dot{B}^{\frac{d}{p}-2}_{p,1})}&\lesssim
2^{-j_0}\|\upsilon\cdot\nabla\theta^\ell\|_{L^1_{t}(\dot{B}^{\frac{d}{p}-1}_{p,1})}\\
&\lesssim \|\upsilon\|_{L^{2}_{t}(\dot{B}^{\frac{d}{p}}_{p,1})}\|\nabla \theta^\ell\|_{L^{2}_{t}(\dot{B}^{\frac{d}{2}-1}_{2,1})},\end{aligned}
$$
and for  $\upsilon\cdot\nabla\theta^h,$ we have
$$
\|\upsilon\cdot\nabla\theta^h\|^{h}_{L^1_{t}(\dot{B}^{\frac{d}{p}-2}_{p,1})}\lesssim
\|\upsilon\|_{L^{2}_{t}(\dot{B}^{\frac{d}{p}}_{p,1})}\|\nabla \theta^h\|_{L^{2}_{t}(\dot{B}^{\frac{d}{p}-2}_{p,1})}.
$$
Therefore we have
\begin{eqnarray}\label{R-E113}
\|\upsilon\cdot\nabla\theta\|^{h}_{L^1_{t}(\dot{B}^{\frac{d}{p}-2}_{p,1})}\lesssim {\cX}^2_{p}(t).
\end{eqnarray}
Similarly,
$$
\|I(a)\Delta\theta\|^{h}_{L^1_{t}(\dot{B}^{\frac{d}{p}-2}_{p,1})}\lesssim\|a\|_{L^\infty_{t}(\dot{B}^{\frac{d}{p}}_{p,1})}\Big(2^{-j_0}\|\Delta\theta^{\ell}\|_{L^1_{T}(\dot{B}^{\frac{d}{2}-1}_{2,1})}
+\|\Delta\theta^{h}\|_{L^1_{T}(\dot{B}^{\frac{d}{p}-2}_{p,1})}\Big)\lesssim {\cX}^2_{p}(t).
$$
Because $p<d$ and $d\geq3$, it follows from Proposition \ref{prop3.1} and \eqref{eq:smalla}  that
\begin{eqnarray}\label{R-E115}
&&\Big\|\frac{Q(\nabla \upsilon, \nabla\upsilon)}{1+a}\Big\|^{h}_{L^1_{t}(\dot{B}^{\frac{d}{p}-2}_{p,1})}\lesssim (1+\|a\|_{L^\infty_{t}(\dot{B}^{\frac{d}{p}}_{p,1})})\|\nabla \upsilon\|^2_{L^2_{t}(\dot{B}^{\frac{d}{p}-1}_{p,1})}\lesssim{\cX}^2_{p}(t)
\end{eqnarray}
and
\begin{eqnarray}\label{R-E116}
\|\tilde{K}_{1}(a)\div\upsilon\|^{h}_{L^1_{t}(\dot{B}^{\frac{d}{p}-2}_{p,1})}\lesssim 2^{-j_{0}}\|a\|_{L^{2}_{T}(\dot{B}^{\frac{d}{p}}_{p,1})}\|\div\upsilon\|_{L^2_{t}(\dot{B}^{\frac{d}{p}-1}_{p,1})}\lesssim {\cX}^2_{p}(t).
\end{eqnarray}
For the  term $\tilde{K}_{2}(a)\theta\div\upsilon$, we use the decomposition
$$ \tilde{K}_{2}(a)\theta\div\upsilon= \tilde{K}_{2}(a)\theta^{\ell}\div\upsilon+ \tilde{K}_{2}(a)\theta^{h}\div\upsilon. $$
Now, we have
\begin{eqnarray}\label{R-E117}
\|\tilde{K}_{2}(a)\theta^{\ell}\div\upsilon\|^{h}_{L^1_{t}(\dot{B}^{\frac{d}{p}-2}_{p,1})}&\!\!\!\lesssim\!\!\! & 2^{-j_{0}}\|\tilde{K}_{2}(a)\theta^{\ell}\div\upsilon\|^{h}_{L^1_{t}(\dot{B}^{\frac{d}{p}-1}_{p,1})}\nonumber\\ &\!\!\!\lesssim\!\!\!&
(1+\|a\|_{L^\infty_{t}(\dot{B}^{\frac{d}{p}}_{p,1})})\|\theta^{\ell}\|_{L^2_{t}(\dot{B}^{\frac{d}{p}}_{p,1})}\|\div\upsilon\|_{L^2_{t}(\dot{B}^{\frac{d}{p}-1}_{p,1})}\lesssim{\cX}^2_{p}(t)
\end{eqnarray}
and
$$
\begin{aligned}
\|\tilde{K}_{2}(a)\theta^{h}\div\upsilon\|^{h}_{L^1_{t}(\dot{B}^{\frac{d}{p}-2}_{p,1})}&\lesssim
 (1+\|a\|_{L^\infty_{t}(\dot{B}^{\frac{d}{p}}_{p,1})})\|\div\upsilon\|_{L^\infty_{t}(\dot{B}^{\frac{d}{p}-2}_{p,1})}\|\theta^{h}\|_{L^1_{t}(\dot{B}^{\frac{d}{p}}_{p,1})}\\&\lesssim
  (\|\upsilon\|^{\ell}_{L^\infty_{t}(\dot{B}^{\frac{d}{2}-1}_{2,1})}+
\|\upsilon\|^{h}_{L^\infty_{t}(\dot{B}^{\frac{d}{p}-1}_{p,1})})\|\theta^{h}\|_{L^1_{t}(\dot{B}^{\frac{d}{p}}_{p,1})}
\lesssim {\cX}^2_{p}(t).
\end{aligned}
$$
Therefore, putting together all the above estimates, we conclude that
\begin{equation}\label{R-E119}
\sum_{j\geq j_0-1}\sup_{0\leq t\leq 2}\biggl(\langle t\rangle^\alpha\!\int_0^t\!e^{-c_0(t-\tau)}2^{j(\frac dp-1)}S_j(\tau)\,d\tau\biggr)\lesssim
{\cX}^2_{p}(2).
\end{equation}
To  bound the supremum for $2\leq t \leq T$ in the last term of \eqref{R-E106},
one can  split the integral on $[0,t]$ into integrals
on  $[0,1]$ and $[1,t].$
The  $[0,1]$  part can be handled  exactly as the supremum on $[0,2]$ treated before.
For  the $[1,t]$ part of the integral,  we use the fact that
\begin{equation}\label{R-E121}
\sum_{j\geq j_0-1}\sup_{2\leq t\leq T}\biggl(\langle t\rangle^\alpha\!\int_1^te^{c_0(\tau-t)}2^{j(\frac dp-1)}S_j(\tau)\,d\tau\biggr)\lesssim
\sum_{j\geq j_0-1} 2^{j(\frac dp-1)}\sup_{1\leq t\leq T} t^\alpha S_j(t).
\end{equation}
In order to bound the term corresponding to $S_j^1,$
we decompose $a$ and $\upsilon$ into low and high frequencies.
Now, we obviously have
$$\begin{aligned}
\|t^\alpha a\upsilon^h\|_{\wt L_t^\infty(\dot B^{\frac dp-1}_{p,1})}
&\lesssim \|a\|_{\wt L_t^\infty(\dot B^{\frac dp}_{p,1})}\|t^\alpha\upsilon
 \|_{\wt L_t^\infty(\dot B^{\frac dp-1}_{p,1})}^h\lesssim \cX_p(t)\cD_p(t)\\
 \|t^\alpha a^h\upsilon\|_{\wt L_t^\infty(\dot B^{\frac dp-1}_{p,1})}
&\lesssim \|t^\alpha a\|^h_{\wt L_t^\infty(\dot B^{\frac dp}_{p,1})}\|\upsilon
 \|_{\wt L_t^\infty(\dot B^{\frac dp-1}_{p,1})}\lesssim \cX_p(t)\cD_p(t),\end{aligned}
 $$
 and, using \eqref{R-E121b},
 $$\begin{aligned}
\|t^\alpha a^\ell\upsilon^\ell\|_{\wt L_t^\infty(\dot B^{\frac dp-1}_{p,1})}^h
&\lesssim \|t^\alpha a^\ell\upsilon^\ell\|_{\wt L_t^\infty(\dot B^{\frac dp+1}_{p,1})}^h\\
&\lesssim \|t^\alpha a^\ell\upsilon^\ell\|_{\wt L_t^\infty(\dot B^{\frac d2+1}_{2,1})}\\
&\lesssim \|t^{\frac12(s_1+\frac d2-\ep)}a\|^\ell_{\wt L_t^\infty(\dot B^{\frac d2}_{2,1})}
 \|t^{\frac12(s_1+\frac d2+\frac12-\ep)}\upsilon\|^\ell_{\wt L_t^\infty(\dot B^{\frac d2+1}_{2,1})}\\
&\hspace{3cm} +  \|t^{\frac12(s_1+\frac d2-\ep)}\upsilon\|^\ell_{\wt L_t^\infty(\dot B^{\frac d2}_{2,1})}
 \|t^{\frac12(s_1+\frac d2+\frac12-\ep)}a\|^\ell_{\wt L_t^\infty(\dot B^{\frac d2+1}_{2,1})}.
 \end{aligned}
$$
 In order to bound the right-hand side, we notice that, from the definition of tilde norms
 and of $\cD_p,$ one can get for $k=0,1,2,$
 \begin{eqnarray}\label{R-E123}
\qquad\|t^{\frac{s_1}2+\frac d4+\frac k2-\frac 12-\frac \varepsilon2}(D^ka^\ell,D^k\upsilon^\ell,D^k\theta^\ell)\|_{\wt L^\infty_T(\dot B^{\frac d2-1}_{2,1})}
&&\!\!\!\!\!\!\!\!\!\lesssim\|t^{\frac{s_1}2+\frac d4+\frac k2-\frac12-\frac \varepsilon2}(a,\upsilon,\theta)\|^\ell_{L_T^\infty(\dot B^{\frac d2-1+k-\ep}_{2,1})}\nonumber\\
&&\!\!\!\!\!\!\!\!\!\leq \cD_p(T). \end{eqnarray}
  So finally, we have
 $$
 \sum_{j\geq j_0-1} 2^{j(\frac dp-1)} \sup_{1\leq t\leq T} t^\alpha S_j^1(t)
 \lesssim \cD_p^2(t).
 $$

The terms corresponding to $\upsilon\cdot\nabla\upsilon^h,$ $\upsilon^h\cdot\nabla\upsilon^\ell,$
$K_1(a)\nabla a^h$ and $I(a)\Delta\upsilon^h$ may be bounded as $a\upsilon^h$
or $a^h\upsilon.$
As regards $\upsilon^\ell\cdot\nabla\upsilon^\ell,$ one
can write using again \eqref{R-E121b} and \eqref{R-E123},
$$\begin{aligned}
\|t^\alpha\upsilon^\ell\cdot\nabla\upsilon^\ell\|_{\wt L_t^\infty(\dot B^{\frac dp-1}_{p,1})}^h
&\lesssim \|t^\alpha\upsilon^\ell\cdot\nabla\upsilon^\ell\|_{\wt L_t^\infty(\dot B^{\frac d2}_{2,1})}^h\\
&\lesssim \|t^{\frac12(s_1+\frac d2-\ep)}\upsilon^\ell\|_{\wt L_t^\infty(\dot B^{\frac d2}_{2,1})}
 \|t^{\frac12(s_1+\frac d2+\frac12-\ep)}\nabla\upsilon^\ell\|_{\wt L_t^\infty(\dot B^{\frac d2}_{2,1})}\\
 &\lesssim \cD_p^2(t)
\end{aligned}
$$
and similarly, thanks to \eqref{R-E44},
$$
\begin{aligned}
\|t^\alpha K_1(a)\nabla a^\ell\|_{\wt L_t^\infty(\dot B^{\frac dp-1}_{p,1})}
&\lesssim  \|t^{\frac12(s_1+\frac d2-\ep)}a\|_{\wt L_t^\infty(\dot B^{\frac dp}_{p,1})}
 \|t^{\frac12(s_1+\frac d2+\frac12-\ep)}\nabla a^\ell\|_{\wt L_t^\infty(\dot B^{\frac d2}_{2,1})}\\
 \|t^\alpha I(a)\Delta\upsilon^\ell\|_{\wt L_t^\infty(\dot B^{\frac dp-1}_{p,1})}
&\lesssim  \|t^{\frac12(s_1+\frac d2-\ep)}a\|_{\wt L_t^\infty(\dot B^{\frac dp}_{p,1})}
 \|t^{\frac12(s_1+\frac d2+\frac12-\ep)}\Delta\upsilon^\ell\|_{\wt L_t^\infty(\dot B^{\frac d2-1}_{2,1})}.
 \end{aligned}
 $$
 The  terms in $S_j^4,$ $S_j^5$ and $S_j^6$ may be treated along the same lines.
 So let us next focus on the terms of $S_j^2$ and $S_j^3$
 corresponding to \eqref{eq:new}.
 Then  Proposition \ref{prop3.1} and Inequality \eqref{R-E121b}  ensure that
$$\begin{aligned}
\|t^\alpha K_2(a)\nabla \theta^{\ell}\|^{h}_{\wt L^\infty_T(\dot B^{\frac dp-1}_{p,1})}&\lesssim \|t^{\frac{s_1}2+\frac d4-\frac \varepsilon2}K_2(a)\|_{\wt L^\infty_T(\dot B^{\frac dp}_{p,1})}
\|t^{\frac{s_1}2+\frac d4+\frac12-\frac \varepsilon2}\nabla \theta^\ell\|_{\wt L^\infty_T(\dot B^{\frac dp}_{p,1})} \\&\!\!\!\!\!\!\!\!\!\!\!\!\!\!\!\!\!\lesssim \Big(\|t^{\frac{s_1}2+\frac d4-\frac \varepsilon2}a\|^\ell_{\wt L^\infty_T(\dot B^{\frac d2}_{2,1})}\!+\!\|t^{\frac{s_1}2+\frac d4-\frac \varepsilon2}a\|^h_{\wt L^\infty_T(\dot B^{\frac dp}_{p,1})}\Big)
\|t^{\frac{s_1}2+\frac d4+\frac12-\frac \varepsilon2}\nabla\theta\|^\ell_{\wt L^\infty_T(\dot B^{\frac {d}{2}}_{2,1})}.
\end{aligned}$$
  Consequently, we arrive at
\begin{eqnarray}\label{R-E125}
\|t^\alpha K_2(a)\nabla \theta^{\ell}\|_{\wt L^\infty_T(\dot B^{\frac dp-1}_{p,1})}\lesssim \cD^2_{p}(T).
\end{eqnarray}
Additionally, it follows from Proposition \ref{prop:compo} that
\begin{equation}\label{R-E128}
\|t^\alpha K_2(a)\nabla \theta^{h}\|_{\wt L^\infty_T(\dot B^{\frac dp-1}_{p,1})}\lesssim
\|a\|_{\wt L^\infty_T(\dot B^{\frac dp}_{p,1})}
\|t^\alpha\nabla\theta\|^h_{\wt L^\infty_T(\dot B^{\frac dp-1}_{p,1})}\lesssim \cX_{p}(T)\cD_{p}(T).
 \end{equation}
To bound $t^{\alpha}\theta\nabla K_3(a)$, we use that  $t^{\alpha}\theta\nabla K_3(a)=t^{\alpha}\theta^{\ell}\nabla K_3(a)+t^{\alpha}\theta^{h}\nabla K_3(a)$. The second term can be estimated as in \eqref{R-E128}.
For  the first term, we write $K_3(a)=K'_3(0)a+\hat{K}_3(a)a$ for some smooth function $\hat{K}_3$ vanishing at $0$. Now, we have thanks to \eqref{R-E123},
$$\begin{aligned}
\|t^{\alpha}\theta^\ell\nabla a^{\ell}\|^{h}_{\wt L^\infty_T(\dot B^{\frac dp-1}_{p,1})}&\lesssim
\|t^{\frac{s_1}2+\frac d4-\frac\varepsilon2}\theta^\ell\|_{\wt L^\infty_T(\dot B^{\frac d2}_{2,1})}\|t^{\frac{s_1}2+\frac d4+\frac12-\frac\varepsilon2} \nabla a^{\ell}\|_{\wt L^\infty_T(\dot B^{\frac d2}_{2,1})}\lesssim \cD^2_{p}(T),\\
 \|t^{\alpha}\theta^\ell\nabla a^{h}\|^{h}_{\wt L^\infty_T(\dot B^{\frac dp-1}_{p,1})}&\lesssim \|\theta^\ell\|_{\wt L^\infty_T(\dot B^{\frac d2-1}_{2,1})}\|t^{\alpha}\nabla a^{h}\|_{\wt L^\infty_T(\dot B^{\frac dp-1}_{p,1})}\lesssim \cX_{p}(T)\cD_{p}(T),
\end{aligned}$$
and, using  Proposition \ref{prop:compo}, the the fact that $\hat K_3(0)=0$ and \eqref{R-E44},
$$\begin{aligned}
\|t^{\alpha}\theta^\ell\nabla (\hat{K}_3(a)a)\|^{h}_{\wt L^\infty_T(\dot B^{\frac dp-1}_{p,1})}
&\lesssim \|t^{\frac{s_1}2+\frac d4-\frac\varepsilon2}\theta^\ell\|_{\wt L^\infty_T(\dot B^{\frac d2}_{2,1})}\|t^{\frac{s_1}2+\frac d4-\frac\varepsilon2} a\|_{\wt L^\infty_T(\dot B^{\frac dp}_{p,1})}\|t^{\frac 12} a\|_{\wt L^\infty_T(\dot B^{\frac dp}_{p,1})}\\&\lesssim\cD^3_{p}(T),\end{aligned}
$$
because    $\frac{s_1}2+\frac d4-\frac\varepsilon2>\frac12$ for sufficiently small $\varepsilon$.
In summary, we get
\begin{eqnarray}\label{R-E129}
\|t^{\alpha}\theta\nabla K_3(a)\|^{h}_{\wt L^\infty_T(\dot B^{\frac dp-1}_{p,1})}\lesssim \cD^2_{p}(T)+\cD^3_{p}(T)+\cX_{p}(T)\cD_{p}(T).
 \end{eqnarray}
 Likewise, Proposition \ref{prop3.1} implies that
\begin{eqnarray}\label{R-E130}
&&\|t^{\alpha}\upsilon\cdot\nabla \theta^h\|^{h}_{\wt L^\infty_T(\dot B^{\frac dp-2}_{p,1})}\lesssim \|\upsilon\|_{\wt L^\infty_T(\dot B^{\frac dp-1}_{p,1})}\|t^\alpha\nabla \theta^h\|_{\wt L^\infty_T(\dot B^{\frac dp-1}_{p,1})}\lesssim \cX_{p}(T)\cD_{p}(T),
 \end{eqnarray}
and it follows from \eqref{R-E121b}, \eqref{R-E123} and \eqref{R-E44} that
$$\begin{aligned}
\|t^{\alpha}\upsilon\cdot\nabla \theta^\ell\|^{h}_{\wt L^\infty_T(\dot B^{\frac dp-2}_{p,1})}
&\lesssim\|t^{\frac{s_1}2+\frac d4-\frac \varepsilon2}\upsilon\|_{\wt L^\infty_T(\dot B^{\frac dp}_{p,1})}
\|t^{\frac{s_1}2+\frac d4+\frac 12-\frac \varepsilon2}\nabla \theta^\ell\|_{\wt L^\infty_T(\dot B^{\frac d2}_{2,1})}\\&\lesssim \cD^2_{p}(T).
\end{aligned}$$
Next,  we have
\begin{eqnarray}\label{R-E132}
&&\|t^{\alpha}I(a)\Delta\theta^h\|^{h}_{\wt L^\infty_T(\dot B^{\frac dp-2}_{p,1})}\lesssim\|a\|_{\wt L^\infty_T(\dot B^{\frac dp}_{p,1})}\|t^\alpha\theta\|^h_{\wt L^\infty_T(\dot B^{\frac dp}_{p,1})}\lesssim \cX_{p}(T)\cD_{p}(T)
\end{eqnarray}
and, using \eqref{R-E44},
\begin{equation}\label{R-E133}
\|t^{\alpha}I(a)\Delta\theta^\ell\|^{h}_{\wt L^\infty_T(\dot B^{\frac dp-2}_{p,1})}\lesssim \|t^{\frac{s_1}2+\frac d4+\frac 12-\frac \varepsilon2}\Delta\theta^\ell\|_{\wt L^\infty_T(\dot B^{\frac d2-1}_{2,1})}\|t^{\frac{s_1}2+\frac d4-\frac \varepsilon2}a\|^{h}_{\wt L^\infty_T(\dot B^{\frac dp}_{p,1})}\lesssim \cD^2_{p}(T).
\end{equation}
To bound the term containing $\frac{Q(\nabla \upsilon, \nabla\upsilon)}{1+a}$, we just write that,
owing to Propositions \ref{prop3.1}, \ref{prop:compo}, and to Condition \eqref{eq:smalla},
$$
\displaylines{\Big\|t^{\alpha}\frac{Q(\nabla \upsilon,\!\nabla\upsilon)}{1+a}\Big\|^{h}_{\wt L^\infty_T(\dot B^{\frac dp-2}_{p,1})}\lesssim
\Bigl(\|t^{\frac{s_{1}}{2}+\frac d4+\frac 14-\frac \varepsilon2}\nabla \upsilon\|_{\wt L^\infty_T(\dot B^{\frac dp}_{p,1})}^\ell
\!+\|t^{\frac{s_{1}}{2}+\frac d4+\frac 14-\frac \varepsilon2}\nabla\upsilon\|_{\wt L^\infty_T(\dot B^{\frac dp}_{p,1})}^h\Bigr)^2\cdotp}
$$
Hence, from \eqref{eq:smalla}, \eqref{R-E123} and the definition of $\cD_{p}$, we infer that
\begin{eqnarray}\label{R-E136}
\Big\|t^{\alpha}\frac{Q(\nabla \upsilon, \nabla\upsilon)}{1+a}\Big\|^{h}_{\wt L^\infty_T(\dot B^{\frac dp-2}_{p,1})}\lesssim \cD^2_{p}(T).
\end{eqnarray}
For the next term in \eqref{eq:new}, we start with
the decomposition $$\tilde{K}_{1}(a)\div\upsilon\triangleq \tilde{K}_{1}(a)\div\upsilon^{h}+\tilde{K}_{1}(a)\div\upsilon^{\ell}.$$ Now, by virtue of  \eqref{R-E121b},
Propositions \ref{prop3.1} and \ref{prop:compo},
\begin{eqnarray}\label{R-E137}
&&\|t^{\alpha}\tilde{K}_{1}(a)\div\upsilon^{h}\|^{h}_{\wt L^\infty_T(\dot B^{\frac dp-2}_{p,1})}\lesssim \|a\|_{\wt L^\infty_T(\dot B^{\frac dp}_{p,1})}\|t^{\alpha}\upsilon\|^{h}_{\wt L^\infty_T(\dot B^{\frac dp-1}_{p,1})}\lesssim \cX_{p}(T)\cD_{p}(T)
\end{eqnarray}
and,  thanks to \eqref{R-E44}, \eqref{R-E121b} and \eqref{R-E123},
$$
\|t^{\alpha}\tilde{K}_{1}(a)\div\upsilon^{\ell}\|^{h}_{\wt L^\infty_T(\dot B^{\frac dp-2}_{p,1})}\lesssim
 \|t^{\frac{s_1}2+\frac d4+\frac 12-\frac \varepsilon2}\div\upsilon^\ell\|_{\wt L^\infty_T(\dot B^{\frac d2}_{2,1})}\|t^{\frac{s_1}2+\frac d4-\frac \varepsilon2}a\|_{\wt L^\infty_T(\dot B^{\frac dp}_{p,1})}\lesssim \cD^2_{p}(T).
$$
For $\tilde{K}_{2}(a)\theta\div\upsilon,$  as the regularity of $\theta^h$ is lower than that of $\upsilon^h,$  we use the decomposition $\tilde{K}_{2}(a)\theta\div\upsilon= \tilde{K}_{2}(a)\theta^{\ell}\div\upsilon+ \tilde{K}_{2}(a)\theta^{h}\div\upsilon$ again. Remembering \eqref{eq:smalla}, it follows
from Propositions \ref{prop3.1} and \ref{prop:compo} and Inequality \eqref{R-E123} that
$$\begin{aligned}
\|t^{\alpha} \tilde{K}_{2}(a)\theta^{h}\div\upsilon\|^{h}_{\wt L^\infty_T(\dot B^{\frac dp-2}_{p,1})}
&\lesssim\|t^{\alpha}\theta^{h}\|_{\wt L^\infty_T(\dot B^{\frac dp}_{p,1})}\|\div\upsilon\|_{\wt L^\infty_T(\dot B^{\frac dp-2}_{p,1})}\\&\lesssim\cD_{p}(T)\cX_{p}(T)\\
\|t^{\alpha} \tilde{K}_{2}(a)\theta^{\ell}\div\upsilon\|^{h}_{\wt L^\infty_T(\dot B^{\frac dp-2}_{p,1})}
&\lesssim \|t^{\frac{s_1}2+\frac d4-\frac \varepsilon2}\theta\|^\ell_{\wt L^\infty_T(\dot B^{\frac d2}_{2,1})}\|t^{\frac{s_1}2+\frac d4+\frac 12-\frac \varepsilon2}\div\upsilon\|_{\wt L^\infty_T(\dot B^{\frac dp}_{p,1})}\\&\lesssim\cD^2_{p}(T).\end{aligned}
$$
Putting all above inequalities  together, the r.h.s. of (\ref{R-E121}) can be estimated as follows:
$$
\sum_{j\geq j_0-1} 2^{j(\frac dp-1)}\sup_{1\leq t\leq T} t^\alpha S_j(t)
\lesssim\cX^2_{p}(T)+\cD^3_{p}(T)+\cD^2_{p}(T)+\cD_{p}(T)\cX_{p}(T).
$$
Consequently, keeping in mind that $\cD_p$ is small,  we obtain
\begin{multline}\label{R-E142}
\|\langle t\rangle^\alpha(\nabla a,\upsilon)\|^h_{\wt L^\infty_T(\dot B^{\frac dp-1}_{p,1})}+\|\langle t\rangle^\alpha\theta\|^h_{\wt L^\infty_T(\dot B^{\frac dp-2}_{p,1})}\\\lesssim
\|(\nabla a_0,\upsilon_0)\|_{\dot B^{\frac dp-1}_{p,1}}^h+\|\theta_0\|_{\dot B^{\frac dp-2}_{p,1}}^h
\!+\cX^2_{p}(T)+\cD^2_{p}(T)\cdotp
\end{multline}


\subsubsection*{Step 3: Decay estimates with gain of regularity  for the high frequencies of $\upsilon$ and $\theta$}

We here want to  prove that the parabolic smoothing effect provided
by the last two equations of \eqref{R-E9} allows to get gain of regularity
 and  decay altogether for   $\upsilon$ and $\theta.$
Let us focus on the equation for $\theta$ (handling $\upsilon$ being similar).
Recall that
$$
\d_t\theta-\beta\Delta\theta=-\gamma \div v+k.
$$
Hence
\begin{equation}\label{R-E143}
\d_t(t^{\alpha}\Delta\theta)-\beta\Delta(t^{\alpha}\Delta\theta)=\alpha\beta t^{\alpha-1}\Delta\theta
+\beta t^{\alpha}\Delta (k-\gamma\div v),\qquad
t^\alpha\Delta\theta|_{t=0}=0.
\end{equation}
We thus deduce from Proposition \ref{prop3.6} that
\begin{equation}\label{R-E144}
\|\tau^{\alpha}\nabla^2\theta\|_{\wt L_t^\infty(\dot B^{\frac dp-2}_{p,1})}^h \lesssim \|\tau^{\alpha-1}\Delta \theta\|_{\wt L^\infty_t(\dot B^{\frac dp-4}_{p,1})}^h +\|\tau^{\alpha}\Delta(k-\gamma\div v)\|_{\wt L^\infty_t(\dot B^{\frac dp-4}_{p,1})}^h,
\end{equation}
hence,
\begin{eqnarray}\label{R-E145}
&&\|\tau^{\alpha}\nabla\theta\|_{\wt L_t^\infty(\dot B^{\frac dp-1}_{p,1})}^h \lesssim  \|\tau^{\alpha-1}\theta\|_{\wt L^\infty_t(\dot B^{\frac dp-2}_{p,1})}^h
+\|\tau^{\alpha} v\|_{\wt L^\infty_t(\dot B^{\frac dp-1}_{p,1})}^h
+\|\tau^{\alpha} k\|_{\wt L^\infty_t(\dot B^{\frac dp-2}_{p,1})}^h.
\end{eqnarray}
Because $\alpha\geq1,$ we have
\begin{equation}\label{R-E146}
\|\tau^{\alpha-1}\theta\|_{\wt L^\infty_t(\dot B^{\frac dp-2}_{p,1})}^h \lesssim \|\langle\tau\rangle^{\alpha}\theta\|_{\wt L^\infty_t(\dot B^{\frac dp-2}_{p,1})}^h\quad\!\!\hbox{and}\!\!\quad
\|\tau^{\alpha} \upsilon\|_{\wt L^\infty_t(\dot B^{\frac dp-1}_{p,1})}^h \lesssim \|\langle\tau\rangle^{\alpha}\upsilon\|_{\wt L^\infty_t(\dot B^{\frac dp-1}_{p,1})}^h.
\end{equation}
Bounding   $\|\tau^{\alpha} k\|_{\wt L^\infty_t(\dot B^{\frac dp-2}_{p,1})}^h$
as in Step 2, one can thus conclude that
\begin{equation}\label{R-E148}
\|\tau^{\alpha}\nabla\theta\|_{\wt L_t^\infty(\dot B^{\frac dp-1}_{p,1})}^h \lesssim
\|\langle\tau\rangle^{\alpha}\upsilon\|_{\wt L^\infty_t(\dot B^{\frac dp-1}_{p,1})}^h+\cD^2_{p}(t)+\cX_{p}^2(t).
\end{equation}
Arguing similarly with the second equation of \eqref{R-E9},
we get
\begin{equation}\label{R-E149}
\|\tau^\alpha\nabla \upsilon\|_{\wt L_t^\infty(\dot B^{\frac dp}_{p,1})}^h \lesssim \|\langle\tau\rangle^{\alpha}(a,\theta)\|_{\wt L^\infty_t(\dot B^{\frac dp}_{p,1})}^h+\cD^2_{p}(t)+\cX_{p}^2(t).
\end{equation}

Finally,  bounding the first terms on the right-side of  \eqref{R-E148}-\eqref{R-E149} according
to \eqref{R-E142},  and adding up \eqref{R-E148}-\eqref{R-E149} to \eqref{R-E89} and \eqref{R-E142} yields for all $T\geq0,$
$$
\cD_{p}(T)\lesssim \cD_{p,0}+\|(a_0,\upsilon_0,\theta_0)\|^{\ell}_{\dot B^{\frac d2-1}_{2,1}}+\|(\nabla a_0,\upsilon_0)\|_{\dot B^{\frac dp-1}_{p,1}}^h+\|\theta_0\|_{\dot B^{\frac dp-2}_{p,1}}^h+\cX^2_{p}(T)+\cD^2_{p}(T).
$$
As Theorem \ref{thm1.1} ensures that $\cX_{p}\lesssim \cX_{p,0}$ and as
 $$\|(a_0,u_0,\theta_0)\|^{\ell}_{\dot B^{\frac d2-1}_{2,1}}\lesssim\|(a_0,u_0,\theta_0)\|^{\ell}_{\dot B^{-s_1}_{2,\infty}},$$
one can  conclude
that \eqref{R-E11} is fulfilled for all time if $\cD_{p,0}$ and $\cX_{p,0}$
are small enough. This completes the proof of Theorem \ref{thm2.1}.\qed
\bigbreak
  Corollary \ref{cor2.1} easily follows from Theorem \ref{thm2.1} : let us just
  show the inequality for $\theta$ as an example.
  Since the embedding $\dot{B}^{s}_{2,1}\hookrightarrow\dot{B}^{s-d(1/2-1/p)}_{p,1} \hookrightarrow \dot{B}^{s}_{p,1}$  holds for the low frequencies whenever $p\geq2,$
  we have for all $-s_1<s\leq\frac dp-2,$
  $$
\sup_{t\in[0,T]} \langle t\rangle^{\frac {s_{1}+s}2}\|\Lambda^s \theta\|_{\dot B^0_{p,1}}\lesssim
 \|\langle t\rangle^{\frac {s_{1}+s}2}\theta\|_{L^\infty_T(\dot B^s_{2,1})}^\ell
 +  \|\langle t\rangle^{\frac {s_{1}+s}2}\theta\|_{L^\infty_T(\dot B^s_{p,1})}^h.
$$
  Hence, using \eqref{R-E11} yields
$$
\|\langle t\rangle^{\frac {s_{1}+s}2}\theta\|_{L^\infty_T(\dot B^s_{2,1})}^\ell\lesssim \bigl({\cD}_{p,0}+\|(\nabla a_0,\upsilon_0)\|^{h}_{\dot B^{\frac dp-1}_{p,1}}+\|\theta_0\|_{\dot B^{\frac dp-2}_{p,1}}^h\bigr).
$$
Now, the fact that  $\alpha\geq\frac{s_1+s}2$ for all $s\leq \frac dp-2$ allows to write that
$$
\|\langle t\rangle^{\frac {s_{1}+s}2}\theta\|_{L^\infty_T(\dot B^s_{p,1})}^h
  \lesssim \bigl({\cD}_{p,0}+\|(\nabla a_0,\upsilon_0)\|^{h}_{\dot B^{\frac dp-1}_{p,1}}+\|\theta_0\|_{\dot B^{\frac dp-2}_{p,1}}^h\bigr),
$$
which completes the proof of Corollary \ref{cor2.1}.\qed
\bigbreak
We end this section with some heuristics concerning  the optimality of
the regularity and decay  exponents
in the definition of $\cD_p.$
Let us first explain why the regularity exponent $s$ in  $\cD_{p,1}$
has to satisfy $s\leq\frac d2+1.$ The general fact (based on Inequality \eqref{R-E29})
 that we used repeatedly  is that the time decay  exponent $\delta$
for $\|(f,g,k)\|_{\dot B^{-s_1}_{2,\infty}}$
 must satisfy $\delta\geq \frac{s_1+s}2\cdotp$
Now, if we look  at  the term $a^\ell\,\div u^\ell,$ then
 a necessary condition for having
$$
\|a^\ell\,\div u^\ell\|_{\dot B^{-s_1}_{2,\infty}}^\ell\lesssim
\|a^\ell\|_{\dot B^{\sigma_1}_{2,\infty}}\|\div u^\ell\|_{\dot B^{\sigma_2}_{2,\infty}}
$$
is that  $\sigma_1+\sigma_2\leq \frac d2-s_1.$ As the decay exponent  of the right-hand side is
$s_1+\frac{\sigma_1+\sigma_2+1}2,$ we deduce that  $\delta\leq\frac d4+\frac{s_1}2+\frac12\cdotp$
Hence we must have $s\leq \frac d2+1.$
\smallbreak
To see  that the decay rate in $\cD_{p,2}$
cannot be more than $s_1+\frac{d}2+\frac12,$
one can observe that, owing to $s\leq \frac d2+1,$
the term  $a^\ell\nabla a^\ell$ (which at most has the same regularity as
 $\nabla a^\ell$) cannot be estimated in a space with higher
 regularity than  $\dot B^{\frac d2}_{2,1}.$ As the corresponding
 estimate reads
 $$
 \|a^\ell\nabla a^\ell\|_{\dot B^{\frac d2}_{2,1}}\lesssim \|a^\ell\|_{\dot B^{\frac d2}_{2,1}}
 \|\nabla a^\ell\|_{\dot B^{\frac d2}_{2,1}},$$
the definition of $\cD_{p,1}$ ensures that  the right-hand  side has decay exponent
 $s_1+ \frac d2+\frac 12\cdotp$
A similar argument shows that the decay rate  in the definition of  $\cD_{p,3}$ is optimal.

\end{document}